\def\struckint{\mathop{%
\def\mathpalette##1##2{\mathchoice{##1\displaystyle##2}%
  {##1\textstyle##2}{##1\scriptstyle##2}{##1\scriptscriptstyle##2}}%
\mathpalette
{\vbox\bgroup\baselineskip0pt\lineskiplimit-1000pt\lineskip-1000pt
\halign\bgroup\hfill$}
{##$\hfill\cr{\intop}\cr\diagup\cr\egroup\egroup}%
}\limits}
\newtheorem{proposition}{Proposition}[section]
\newtheorem{theorem}[proposition]{Theorem}
\newtheorem{lemma}[proposition]{Lemma}
\newtheorem{corollary}[proposition]{Corollary}
\newtheorem{hypothesis}[proposition]{Hypothesis}
\theoremstyle{definition}
\newtheorem{definition}[proposition]{Definition}
\newtheorem{note}[proposition]{Note}
\newtheorem{remark}[proposition]{Remark}
\newtheorem{example}[proposition]{Example}
\newenvironment{pf*}[1]{\medskip \noindent {\em #1.} }{\endproof \medskip}
\def\bM{\mathop{\bf M}\nolimits}
\def\stochk{\mathop{\mathcal{S}_k}}
\def\simplexk{\mathop{\Delta_k}\nolimits}
\def\ksimplexk{\mathop{\Delta^k_k}\nolimits}
\def\partitionsnk{\mathop{\mathcal{P}_{[n]:k}}\nolimits}
\def\matrixnk{\mathop{\mathcal{M}_{[n]:k}}\nolimits}
\def\labelednk{\mathop{\mathcal{L}_{[n]:k}}\nolimits}
\def\labeledNk{\mathop{\mathcal{L}_{[\infty]:k}}\nolimits}
\def\klabelednk{\mathop{\mathcal{L}^k_{[n]:k}}\nolimits}
\def\cp{\mathop{\rm CP}\nolimits}
\def\equalinlaw{\mathop{=_{\mathcal{L}}}\nolimits}
\def\comp{\mathop{\rm c}\nolimits}
\newcommand{\xnorm}[1]{ \Vert #1 \Vert }
\newcommand{\zz}[1]{\mathbb #1}
\title[Convergence Rates]{Convergence Rates of Markov Chains on Spaces
of Partitions}
\author{Harry Crane}
\address{Rutgers University \\ Department
 of Statistics and Biostatistics\\
 110 Frelinghuysen Road\\
 Piscataway, NJ 08854}
\email{hcrane@stat.rutgers.edu}
\author{Steven P. Lalley}
\address{Department of Statistics\\
 University of Chicago \\
5734 University Avenure\\
 Chicago, IL 60637}
\email{lalley@galton.uchicago.edu}
\urladdr{www.statistics.uchicago.edu/$\sim$lalley}
\keywords{cut-and-paste chain, mixing time, exchangeability, cutoff phenomenon, Lyapunov exponent }
\thanks{Second author supported by NSF grant DMS  - 0805755}
\subjclass{Primary 60J05, secondary 60B10 60B20}
\date{\today}
\begin{document}


\maketitle
\begin{abstract}
We study the convergence rate to stationarity for a class of exchangeable
partition-valued Markov chains called cut-and-paste chains.  The law governing the transitions
 of a cut-and-paste chain are determined by products of i.i.d.\ stochastic 
 matrices, which describe the chain induced on the simplex by taking
 asymptotic frequencies.  Using this representation, we establish 
 upper bounds for the mixing times of ergodic cut-and-paste chains,
 and under certain conditions on the distribution of the governing
 random matrices we show that the ``cutoff phenomenon'' holds.
\end{abstract}

\section{Introduction} A Markov chain $\{X_{t} \}_{t= 0,1,2,\dotsc }$
on the space $[k]^{\zz{N}}$ of $k-$colorings of the positive integers
$\zz{N}$ is said to be \emph{exchangeable} if its transition law is
equivariant with respect to finite permutations of $\zz{N}$ (that is,
permutations that fix all but finitely many elements of
$\zz{N}$). Exchangeability does not imply that the Markov chain has
the Feller property (relative to the product topology on
$[k]^{\zz{N}}$), but if a Markov chain is both exchangeable and Feller
then it has a simple \emph{paintbox} representation, as proved by
Crane \cite{Crane2012}. In particular, there exists a sequence
$\{S_{t} \}_{t\geq 1}$ of i.i.d.  $k\times k$ random column-stochastic
matrices (the paintbox sequence) such that conditional on the entire
sequence $\{S_{t} \}_{t\geq 1}$ and on $X_{0},X_{1},\dotsc ,X_{m}$,
the coordinate random variables$\{X^{i}_{m+1} \}_{i\in [n]}$ are
independent, and $X^{i}_{m+1}$ has the multinomial distribution
specified by the $X^{i}_{m}$ column of $S_{m+1}$. Equivalently (see
Proposition~\ref{prop:cond indep} in
section~\ref{section:cut-and-paste}), conditional on the paintbox
sequence, the coordinate sequences $\{X^{i}_{m+1} \}_{m\geq 0}$ are
independent, time-inhomogeneous Markov chains on the state space $[k]$
with one-step transition probability matrices
$S_{1},S_{2},\dotsc$. This implies that for any integer $n\geq 1$ the
restriction $X^{[n]}_{t}$ of $X_{t}$ to the space $[k]^{[n]}$ is
itself a Markov chain. We shall refer to such Markov chains $X_{t}$
and $X^{[n]}_{t}$ as \emph{exchangeable Feller cut-and-paste} chains,
or EFCP chains for short. Under mild hypotheses on 
the paintbox distribution (see the discussion in
section~\ref{section:convergence}) the restrictions of EFCP chains
$X^{[n]}_{t}$ to the finite configuration spaces $[k]^{[n]}$ are
ergodic.  The main results of this paper,
theorems \ref{theorem:A}--\ref{theorem:B}, relate the convergence rates
of these chains to  properties of the paintbox process
$S_{1},S_{2},\dotsc $.

\begin{theorem}\label{theorem:A}
Assume that for some $m\geq 1$ there is positive probability that all
entries of the matrix product $S_{m}S_{m-1}\dotsb S_{1}$ are
nonzero. Then the EFCP chain $X^{[n]}$ is ergodic, and it mixes in
$O(\log n)$ steps. 
\end{theorem}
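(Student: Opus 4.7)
The plan is a Doeblin-style coupling argument that exploits the conditional independence of coordinates given the paintbox. For ergodicity: the hypothesis yields positive probability that $T_m := S_m S_{m-1} \cdots S_1$ has all entries strictly positive. Since, conditional on the paintbox, the coordinates of $X^{[n]}$ evolve as independent inhomogeneous Markov chains with common transition matrices (by the conditional-independence proposition cited above), the unconditional $m$-step transition probability from $x$ to $y$ in $[k]^{[n]}$ equals $E\bigl[\prod_{i=1}^n T_m(y^i, x^i)\bigr]$, which is strictly positive. This gives irreducibility; aperiodicity follows because the set of return times to any fixed state contains all integers $\geq m$. Thus $X^{[n]}$ is ergodic with a unique stationary distribution $\pi^{[n]}$.

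For the mixing bound, I would couple $X^{[n]}$ with a copy $Y^{[n]}$ started from $\pi^{[n]}$ and driven by the same paintbox $\{S_t\}$. Given the paintbox, the pairs $(X^i, Y^i)$ are coupled independently across $i$: once $X^i_t = Y^i_t$ they move together, and otherwise, over each block $B_j := \{(j-1)m+1, \ldots, jm\}$, I use maximal coupling of the $X^i_{(j-1)m}$- and $Y^i_{(j-1)m}$-columns of the block product $T^{(j)} := S_{jm} \cdots S_{(j-1)m+1}$; the $T^{(1)}, T^{(2)}, \ldots$ are i.i.d.\ copies of $T_m$. By continuity of measure one can choose $\epsilon > 0$ so small that $p := P\bigl(T_m(b,a) \geq \epsilon \text{ for all } a,b\bigr) > 0$. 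Let $E_j := \{T^{(j)}(b,a) \geq \epsilon \text{ for all } a,b\}$; then $(E_j)_{j \geq 1}$ are i.i.d.\ Bernoulli$(p)$. On $E_j$ every column of $T^{(j)}$ has all entries $\geq \epsilon$, so the total variation distance between any two columns is at most $1 - k\epsilon$, and a not-yet-coupled coordinate pair coalesces by the end of block $j$ with conditional probability at least $k\epsilon$.

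Writing $\tau^i$ for the coalescence time of the $i$-th coordinate pair,
\[
P\bigl(\tau^i > Km \,\big|\, \{S_t\}\bigr) \;\leq\; \prod_{j=1}^K \bigl(1 - k\epsilon\, \mathbf{1}_{E_j}\bigr),
\]
so taking expectations and using independence across blocks yields $P(\tau^i > Km) \leq (1 - k\epsilon p)^K$. A union bound over $i \in [n]$ gives
\[
\bigl\| \mathrm{Law}(X^{[n]}_{Km}) - \pi^{[n]} \bigr\|_{TV} \;\leq\; P\bigl(X^{[n]}_{Km} \neq Y^{[n]}_{Km}\bigr) \;\leq\; n(1 - k\epsilon p)^K,
\]
which drops below any fixed threshold once $K = C \log n$ for a sufficiently large constant $C$. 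This delivers mixing time $Km = O(\log n)$.

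The main technical point is the per-block coalescence estimate: one needs a uniform-in-starting-state lower bound on the coupling probability within a block, which is obtained by taking $\epsilon$ small enough that the positive-entry event still carries positive probability and hands us a deterministic minimum-entry bound. Everything else is organization: conditional independence across coordinates (given the paintbox) makes the per-coordinate coupling legitimate, and independence of blocks makes the coalescence probability decay geometrically in the number of blocks.
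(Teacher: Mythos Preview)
Your argument is correct and takes a genuinely different route from the paper. The paper does not couple coordinate by coordinate; instead it works through the geometry of the matrix products $Q_m=S_m\cdots S_1$ acting on the simplex $\Delta_k$. Under the positive-entries hypothesis the diameter of $Q_m(\Delta_k)$ decays exponentially (Proposition~\ref{proposition:simplex-collapse} and Corollary~\ref{corollary:hyp-positivity}), so after $m\approx K\log n$ steps the \emph{columns} of $Q_m$ differ by at most $n^{-1/2-\varepsilon}$. The paper then compares the two conditional laws \eqref{eq:cds} directly as product-multinomial measures, invoking the likelihood-ratio estimate of Corollary~\ref{cor:prod-multiTV} to conclude that the conditional (hence unconditional, via Lemma~\ref{lemma:mix}) total variation distance is small. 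Ergodicity is deduced from the same collapse property (Proposition~\ref{proposition:ergodicityces-condition}) rather than from a direct positivity computation. Your Doeblin-style per-block coalescence is more elementary---it avoids the multinomial total-variation lemma entirely and needs no explicit contraction rate---and the union bound over coordinates is a clean substitute for the product-measure comparison. What the paper's approach buys is a constant tied to the actual contraction rate $\varrho$ of $Q_m$ on the simplex (Corollary~\ref{corollary:explicitUB}); this sharper constant, refined to $\lambda_1$ under Hypothesis~\ref{hypothesis:L2Density}, is exactly what is needed for the upper half of the cutoff in Theorem~\ref{theorem:cutoff}. Your constant $-m/\log(1-k\epsilon p)$ is perfectly adequate for the $O(\log n)$ statement but would not match the lower bound in the cutoff window.
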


\begin{theorem}\label{theorem:B}
Assume that the distribution of $S_{1}$ is absolutely continuous
relative to Lebesgue measure on the space of $k\times k$
column-stochastic matrices, with density of class $L^{p}$ for some
$p>1$. Then the associated EFCP chains $X^{[n]}$ exhibit the
\emph{cutoff phenomenon}: there exists a positive constant $\theta$
such that for all sufficiently small $\delta ,\varepsilon >0$ the
(total variation) mixing times satisfy
\begin{equation}\label{eq:cutoff-phenomenon}
	(\theta -\delta )\log n \leq t^{(n)}_{\textsc{mix}}
	(\varepsilon)\leq t^{(n)}_{\textsc{mix}} (1-\varepsilon)\leq
	(\theta +\delta )\log n
\end{equation}
for all sufficiently large $n$.
\end{theorem}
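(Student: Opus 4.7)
The plan is to recast mixing as a quantitative statement about the random matrix product $M_t := S_t S_{t-1}\dotsb S_1$ acting on the sum-zero hyperplane $V_0 := \{v\in\mathbb{R}^k:\mathbf{1}^\top v=0\}$. Conditional on the paintbox, the time-$t$ law of $X^{[n]}$ from starting state $\mathbf{x}\in[k]^n$ is the product measure $\bigotimes_{i=1}^n \lambda_t(x_i)$, where $\lambda_t(a) := M_t e_a$ is the $a$-th column of $M_t$; passing to the distributional limit expresses the stationary law $\pi^{(n)}$ as the analogous exchangeable mixture against the (random) common limit of all columns. The $L^p$ density hypothesis secures Furstenberg's strong irreducibility and contractivity of the semigroup generated by $S_1$ on $V_0$, so the top Lyapunov exponent $-\gamma := \lim_{t\to\infty}t^{-1}\log\|M_t v\|$ is deterministic and strictly negative for every nonzero $v\in V_0$, and the cutoff constant will be $\theta := 1/(2\gamma)$.

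For the upper bound $t^{(n)}_{\textsc{mix}}(1-\varepsilon)\le(\theta+\delta)\log n$, the triangle inequality together with stationarity of $\pi^{(n)}$ gives $d_{\mathrm{TV}}(\mu^{\mathbf{x}}_t,\pi^{(n)})\le\max_{\mathbf{y}}d_{\mathrm{TV}}(\mu^{\mathbf{x}}_t,\mu^{\mathbf{y}}_t)$. Conditional on the paintbox the two sides are products on $[k]^n$, and squared-Hellinger subadditivity yields
\[
	d_{\mathrm{TV}}\bigl(\mu^{\mathbf{x}}_t,\mu^{\mathbf{y}}_t\,\big|\,\text{paintbox}\bigr) \le C\sqrt{n}\,\max_{a,b}\|M_t(e_a-e_b)\|.
\]
A quantitative Lyapunov bound (discussed below) then shows $\max_{a,b}\|M_t(e_a-e_b)\|\le n^{-1/2-\gamma\delta/2}$ at $t=(\theta+\delta)\log n$ on a paintbox event of probability $1-o(1)$; elsewhere $d_{\mathrm{TV}}$ is bounded by the trivial $1$, and integrating over the paintbox gives $d_{\mathrm{TV}}(\mu^{\mathbf{x}}_t,\pi^{(n)})=o(1)$.

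For the lower bound $t^{(n)}_{\textsc{mix}}(\varepsilon)\ge(\theta-\delta)\log n$, I would take a starting state $\mathbf{x}^\star$ that uses each color equally often and introduce the contrast statistic
\[
T_{a,b,c}(\mathbf{X}) := \frac{1}{n_a}\sum_{i:\,x^\star_i=a}\mathbf{1}\{X^i=c\} - \frac{1}{n_b}\sum_{i:\,x^\star_i=b}\mathbf{1}\{X^i=c\}.
\]
Conditional on the paintbox, $T_{a,b,c}$ has mean $\lambda_t(a)_c-\lambda_t(b)_c$ and variance $O(1/n)$ under $\mu^{\mathbf{x}^\star}_t$, whereas under $\pi^{(n)}$ all $n$ coordinates are conditionally i.i.d.\ from a single random measure $\Theta$, making the conditional mean of $T_{a,b,c}$ vanish identically. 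The Lyapunov CLT ensures that at $t=(\theta-\delta)\log n$ some triple $(a,b,c)$ satisfies $|\lambda_t(a)_c-\lambda_t(b)_c|\ge n^{-1/2+\gamma\delta/2}$ with probability $1-o(1)$, and a Chebyshev comparison on the event $\{|T_{a,b,c}|\ge\tfrac12|\lambda_t(a)_c-\lambda_t(b)_c|\}$ forces $d_{\mathrm{TV}}(\mu^{\mathbf{x}^\star}_t,\pi^{(n)})\ge 1-o(1)$.

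The central technical obstacle is the quantitative Lyapunov control driving both bounds: Furstenberg--Kesten supplies only $t^{-1}\log\|M_t v\|\to -\gamma$ almost surely, which is far too crude on the $\log n$ scale at which cutoff occurs. The required uniform moderate-deviation estimate will come from a spectral-gap analysis of the Markov chain $v\mapsto S_1 v/\|S_1 v\|$ on the projective sphere of $V_0$: boundedness of the density of $S_1$ in $L^p$ for some $p>1$ is exactly strong enough to produce a Doeblin-type minorization for some iterate of this chain, hence a spectral gap of the transfer operator on H\"older functions (Le Page's theorem), and in turn a Gaussian CLT for $\log\|M_t v\|$ with fluctuations of order $\sqrt{t}$. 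This places the cutoff window on the natural $\sqrt{\log n}$ scale and welds the upper and lower bounds into the two-sided estimate \eqref{eq:cutoff-phenomenon}.
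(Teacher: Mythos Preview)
Your outline is structurally sound and would yield a correct proof, but you have misdiagnosed the central technical difficulty and consequently propose machinery far heavier than what is needed.

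\medskip
\textbf{The Lyapunov input.} You assert that the almost sure convergence $t^{-1}\log\|M_t|_{V_0}\|\to -\gamma$ from Furstenberg--Kesten is ``far too crude on the $\log n$ scale,'' and you reach for Le~Page's spectral gap and the CLT for $\log\|M_t v\|$. This is not so. The theorem only asks for cutoff to within a window of width $\delta\log n$ for arbitrary fixed $\delta>0$, not $\sqrt{\log n}$. Almost sure convergence gives: for any $\eta>0$ there is a random $M_\eta<\infty$ such that $(\lambda_1-\eta)^m\le \|Q_m|_{V_0}\|\le(\lambda_1+\eta)^m$ for all $m\ge M_\eta$. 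Since $t=(\theta\pm\delta)\log n\to\infty$, the event $\{t\ge M_\eta\}$ has probability $1-o(1)$, and on that event $\|Q_t|_{V_0}\|$ lies between $n^{-1/2-\alpha}$ and $n^{-1/2+\alpha}$ for suitable $\alpha=\alpha(\delta)>0$. This is exactly what both halves of the argument require. The paper proceeds precisely this way: it establishes $\|Q_m|_{V_0}\|^{1/m}\to\lambda_1$ a.s.\ via Kingman's subadditive ergodic theorem together with an elementary determinant bound (the $L^p$ density hypothesis is used only to show $E|\log\det S_1|<\infty$, hence $\lambda_1>0$), and then feeds this directly into the TV comparisons. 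No transfer-operator analysis, no Doeblin minorization, no CLT.

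\medskip
\textbf{Comparison of the two routes.} Granting the Lyapunov input, your upper bound via squared-Hellinger subadditivity and the paper's via a direct likelihood-ratio estimate on product multinomials are interchangeable; both need the columns of $Q_t$ to stay bounded away from $\partial\Delta_k$, which the paper handles by observing that whenever the last factor $S_t$ has all entries at least $\delta$ (a positive-probability event under the density hypothesis) every entry of $Q_t$ is at least $\delta$---you should make this explicit for your Hellinger constant $C$. For the lower bound, you compare one initial state against stationarity via the contrast statistic $T_{a,b,c}$ and Chebyshev, whereas the paper compares two carefully chosen initial states (one monochromatic, one split into blocks of different colors) and invokes a product-Bernoulli separation lemma. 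Your test-statistic argument is arguably cleaner, but note that the separating triple $(a,b,c)$ is random in the paintbox, so the actual distinguishing event must be $\{\max_{a,b,c}|T_{a,b,c}|>\tfrac12 n^{-1/2+\alpha}\}$; this is harmless but should be said.

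\medskip
In short: your skeleton is correct and close to the paper's, but replace the Le~Page paragraph with the two-line observation that a.s.\ convergence of the top singular value already controls $\|Q_t|_{V_0}\|$ to within any factor $n^{\pm\alpha}$ once $t\to\infty$.
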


Formal statements of these theorems will be given in due course (see
Theorems \ref{theorem:ub} and \ref{theorem:cutoff} in
section~\ref{section:convergence}), and less stringent hypotheses for
the  $O (\log n)$ convergence rate will be given.
In the special case $k=2$ the results are  related to some
classical results for random walks on the hypercube, e.g.\ the
Ehrenfest chain on $\{0,1\}^n$: see example \ref{ex:hypercube}.

The key to both results is that the relative frequencies of the
different colors are determined by the random matrix products
$S_{t}S_{t-1}\dotsb S_{1}$ (see Proposition~\ref{prop:cond
indep}). The hypotheses of Theorem~\ref{theorem:A} ensure that these
matrix products contract the $k-$simplex to a point at least
exponentially rapidly.  The stronger hypotheses of
Theorem~\ref{theorem:B} prevent the simplex from collapsing at a
faster than exponential rate. 

The paper is organized as follows.  In section~\ref{sec:prelims:tv} we
record some simple and elementary facts about total variation distance, and
in section~\ref{sec:prelims:paintbox} we define cut-and-paste Markov
chains formally and establish the basic relation with the paintbox
sequence (Proposition~\ref{prop:cond indep}).  In section
\ref{section:rm} we discuss the contractivity properties of products
of random stochastic matrices. In section \ref{section:convergence} we
prove the main results concerning ergodicity and mixing rates of
cut-and-paste chains, and in section \ref{section:examples} we discuss
some examples of cut-and-paste chains not covered by our main
theorems. Finally, in section \ref{section:projected chains} we deduce
mixing rate and cutoff for projections of the cut-and-paste chain into
the space of ordinary set partitions.

\section{Preliminaries: Total Variation Distance }\label{sec:prelims:tv}

Since the state spaces of interest in our main results are finite, it
is natural to use the total variation metric to measure the distance
between the law $\mathcal{D}(X_m)$ of the chain $X$ at time $m\geq1$
and its stationary distribution $\pi$.  The \emph{total variation
distance} $\xnorm{\mu -\nu}_{TV}$ between two probability measures
$\mu ,\nu$ on a finite or countable set $\mathcal{X}$ is defined by
\begin{equation}\label{eq:definitionTV}
	\xnorm{\mu -\nu}_{TV}=\frac{1}{2} \sum_{x\in \mathcal{X}} |\mu
	(x)-\nu (x)| =\max_{B\subset \mathcal{X}} (\nu (B)-\mu (B)).
\end{equation}
The maximum is attained at $B^*=\{x\,:\, \nu (x)\geq \mu (x)\}$ and,
since the indicator $\mathbf{1}_{B^*}$ is a function only of the
likelihood ratio $d\nu /d\mu$, the total variation distance
$\xnorm{\mu -\nu}_{TV}$ is the same as the total variation distance
between the $\mu -$ and $\nu -$ distributions of any sufficient
statistic.  In particular, if $Y=Y (x)$ is a random variable such that
$d\nu /d\mu$ is a function of $Y$, then
\begin{equation}\label{eq:sufficientStat}
	\xnorm{\mu -\nu}_{TV}=\frac{1}{2} \sum_{y} |\nu (Y=y)-\mu
	(Y=y)|,
\end{equation}
where the sum is over all possible values of $Y (x)$.

Likelihood ratios  provide a useful means for showing that two
probability measures are close in total variation distance.

\begin{lemma}\label{lemma:lrA}
 Fix $\varepsilon >0$, and define
\[
	B_{\varepsilon}= \left\{x \,:\, \left| \frac{\mu (x)}{\nu (x)}
	-1 \right|>\varepsilon \right\} . 
\]
If $\nu (B_{\varepsilon})<\varepsilon$, then $\xnorm{\mu
-\nu}_{TV}<2\varepsilon$.
\end{lemma}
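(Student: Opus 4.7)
The plan is to split the sum $\tfrac{1}{2}\sum_x |\mu(x)-\nu(x)|$ in the definition of total variation distance over the sets $B_\varepsilon$ and $B_\varepsilon^c$, use the pointwise closeness of $\mu$ and $\nu$ on $B_\varepsilon^c$, and control the mass that $\mu$ and $\nu$ assign to $B_\varepsilon$ using the hypothesis together with the fact that $\mu$ and $\nu$ are probabilities summing to one.

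\smallskip

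First, on $B_\varepsilon^c$ the defining inequality gives $|\mu(x)-\nu(x)|\leq \varepsilon\,\nu(x)$ for every $x$ (the points with $\nu(x)=0$ that happen to lie in $B_\varepsilon^c$ are harmless, since then necessarily $\mu(x)=0$ too, under the natural convention $\mu(x)/\nu(x)=\infty$ whenever $\nu(x)=0<\mu(x)$). Summing yields
\[
    \tfrac{1}{2}\sum_{x\in B_\varepsilon^c}|\mu(x)-\nu(x)|
    \;\leq\; \tfrac{\varepsilon}{2}\,\nu(B_\varepsilon^c)
    \;\leq\; \tfrac{\varepsilon}{2}.
\]
For the contribution from $B_\varepsilon$, I would use the crude bound $|\mu(x)-\nu(x)|\leq \mu(x)+\nu(x)$ and estimate both masses separately. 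The hypothesis gives $\nu(B_\varepsilon)<\varepsilon$ directly. To bound $\mu(B_\varepsilon)$, I would apply the reverse inequality $\mu(x)\geq (1-\varepsilon)\nu(x)$ valid on $B_\varepsilon^c$, obtaining
\[
    \mu(B_\varepsilon^c)\;\geq\;(1-\varepsilon)\nu(B_\varepsilon^c)\;>\;(1-\varepsilon)^{2},
\]
and hence $\mu(B_\varepsilon)<1-(1-\varepsilon)^{2}=2\varepsilon-\varepsilon^{2}$.

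\smallskip

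Combining the two pieces gives
\[
    \|\mu-\nu\|_{TV}\;<\;\tfrac{\varepsilon}{2}+\tfrac{1}{2}\bigl(\varepsilon+2\varepsilon-\varepsilon^{2}\bigr)
    \;=\;2\varepsilon-\tfrac{\varepsilon^{2}}{2}\;<\;2\varepsilon,
\]
as required. There is no genuine obstacle here; the only point to watch is the bookkeeping for the boundary case $\nu(x)=0$, which is handled by the convention above, and the strictness of the final inequality, which is inherited from the strict hypothesis $\nu(B_\varepsilon)<\varepsilon$.
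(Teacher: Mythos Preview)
Your proof is correct and follows essentially the same route as the paper: split the total variation sum over $B_\varepsilon$ and $B_\varepsilon^c$, bound the $B_\varepsilon^c$ part by $\varepsilon\,\nu(B_\varepsilon^c)$, and control $\mu(B_\varepsilon)$ via $\mu(B_\varepsilon^c)\geq(1-\varepsilon)\nu(B_\varepsilon^c)>(1-\varepsilon)^2$, arriving at the same numerical bound $2\varepsilon-\varepsilon^2/2$. Your version is in fact slightly more careful, since you explicitly address the convention for points with $\nu(x)=0$.
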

\begin{proof}
By definition of $B_{\varepsilon}$,
$B_{\varepsilon}^{\comp}:=\{x:|\mu(x)-\nu(x)|\leq\varepsilon\nu(x)\}$
and so $(1-\varepsilon)\nu(x)\leq\mu(x)\leq(1+\varepsilon)\nu(x)$ for
every $x\in B_{\varepsilon}^{\comp}$ and
$$\mu(B_{\varepsilon}^{\comp})\geq(1-\varepsilon)\nu(B_{\varepsilon}^{\comp}).$$
By assumption $\nu(B_{\varepsilon})<\varepsilon$, it follows that
$\mu(B_{\varepsilon}^{\comp})\geq(1-\varepsilon)^2$ and
\begin{eqnarray*}
\xnorm{\mu-\nu}_{TV}&=&\frac{1}{2}\left[\sum_{x\in B_\varepsilon}|\mu(x)-\nu(x)|+\sum_{x\in B_{\varepsilon}^{\comp}}|\mu(x)-\nu(x)|\right]\\
&\leq&\frac{1}{2}\left[\sum_{x\in B_{\varepsilon}}\mu(x)+\sum_{x\in B_{\varepsilon}}\nu(x)+\sum_{x\in B_{\varepsilon}^{\comp}}|\mu(x)-\nu(x)|\right]\\
&<&2\varepsilon.\end{eqnarray*}
\end{proof}

The convergence rates of EFCP chains will  be (in the ergodic
cases) determined by the contractivity properties of products
of random stochastic $k\times k$ matrices on the $(k-1)$-dimensional simplex 
\begin{equation}\label{eq:simplex-def}
\simplexk:=\left\{(s_1,\ldots,s_k)^{T}:s_i\geq0 \quad \text{and} \;\;
\sum_i s_i=1 \right\}.
\end{equation}
We now record some preliminary lemmas about convergence of probability
measures on $\simplexk$ that we will need later.  For each
$n\in\mathbb{N}$ and each element $s\in\simplexk$ define a probability
measure $\varrho_s^n$, the \emph{product multinomial-}$s$
\emph{measure} on $[k]^n$ by
\begin{equation}\label{eq:multi-s}
\varrho_s^n(x):=\prod_{j=1}^n s_{x^j}\quad \text{for} \;\; x=x^1x^2\cdots
x^n\in[k]^n. 
\end{equation}
Observe that the vector $m(x):=(m_1,\ldots,m_k)$ of cell
counts defined by $m_j:=\sum_{i=1}^m1_{j}(x^i)$ is sufficient for the
likelihood ratio $\varrho_s^n(x)/\varrho_{s'}^n(x)$ of any two
product-multinomial measures $\varrho_s^n$ and $\varrho_{s'}^n$. 


\begin{corollary}\label{cor:multiTV}
Fix $\delta,\varepsilon>0$.  If $s_n,s'_n$ are two sequences in
$\simplexk$ such that all coordinates of $s_n,s'_n$ are in the
interval $[\delta,1-\delta]$ for every $n$, and if
$\xnorm{s_{n}-s'_{n}}_{\infty}<n^{-1/2-\varepsilon}$, then
$$\lim_{n\rightarrow\infty}\xnorm{\varrho_{s_n}^n-\varrho_{s'_n}^n}_{TV}=0.$$
\end{corollary}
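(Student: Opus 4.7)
The plan is to invoke Lemma \ref{lemma:lrA} with $\mu=\varrho_{s'_n}^n$ and $\nu=\varrho_{s_n}^n$, which reduces the corollary to showing that the likelihood ratio $\varrho_{s'_n}^n(x)/\varrho_{s_n}^n(x)$ concentrates at $1$ under $\varrho_{s_n}^n$ as $n\to\infty$. Since the ratio is a function of the sufficient statistic $m(x)=(m_1,\dots,m_k)$, it is equivalent to control the log-likelihood ratio
\[
L_n:=\sum_{j=1}^k m_j\log\frac{s'_{n,j}}{s_{n,j}},
\]
and to show that $L_n\to 0$ in $\varrho_{s_n}^n$-probability, from which $|\varrho_{s'_n}^n/\varrho_{s_n}^n-1|\to 0$ in probability and hence Lemma \ref{lemma:lrA} applies.

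To analyze $L_n$, write $\epsilon_{n,j}:=s'_{n,j}-s_{n,j}$, so $|\epsilon_{n,j}|<n^{-1/2-\varepsilon}$ by hypothesis, and note that $\sum_j \epsilon_{n,j}=0$ since both vectors lie in $\simplexk$. Because $s_{n,j}\in[\delta,1-\delta]$, a second order Taylor expansion gives
\[
\log\frac{s'_{n,j}}{s_{n,j}}=\log\!\left(1+\frac{\epsilon_{n,j}}{s_{n,j}}\right)=\frac{\epsilon_{n,j}}{s_{n,j}}+R_{n,j},
\]
where $|R_{n,j}|\le C\delta^{-2}n^{-1-2\varepsilon}$ for a universal constant $C$. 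Substituting, decompose
\[
L_n=\sum_{j=1}^k (m_j-ns_{n,j})\frac{\epsilon_{n,j}}{s_{n,j}}+n\sum_{j=1}^k\epsilon_{n,j}+\sum_{j=1}^k m_j R_{n,j}.
\]
The middle term is identically $0$, and the last term is deterministically $O(n^{-2\varepsilon})$ since $\sum_j m_j=n$.

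It remains to handle the first, random term. Under $\varrho_{s_n}^n$ the cell counts $m_j$ have mean $ns_{n,j}$ and $\mathrm{Var}(m_j)=ns_{n,j}(1-s_{n,j})$, with $\mathrm{Cov}(m_i,m_j)=-ns_{n,i}s_{n,j}$. Hence the linear combination $\sum_j(m_j-ns_{n,j})\epsilon_{n,j}/s_{n,j}$ has mean zero and variance bounded by $\sum_j ns_{n,j}(\epsilon_{n,j}/s_{n,j})^2\le k\delta^{-1}n\cdot n^{-1-2\varepsilon}=O(n^{-2\varepsilon})$. By Chebyshev's inequality this term tends to zero in probability, so $L_n\to 0$ in $\varrho_{s_n}^n$-probability. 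The conclusion follows from Lemma \ref{lemma:lrA}: for any $\eta>0$, eventually $\varrho_{s_n}^n\{|\varrho_{s'_n}^n/\varrho_{s_n}^n-1|>\eta\}<\eta$.

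The only point requiring care is keeping the variance of the linear term at size $o(1)$, which is exactly why the hypothesis $\xnorm{s_n-s'_n}_\infty=o(n^{-1/2})$ is natural: differences of order $n^{-1/2}$ would produce a log-likelihood ratio of order one, which is the borderline regime familiar from contiguity of product multinomial measures. The uniform lower bound $\delta$ on the coordinates is used both to control the Taylor remainder and to divide safely by $s_{n,j}$.
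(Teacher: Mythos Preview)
Your argument is correct and follows exactly the route the paper indicates: invoke Lemma~\ref{lemma:lrA} and verify that the likelihood ratio $d\varrho_{s'_n}^n/d\varrho_{s_n}^n$ is close to $1$ with $\varrho_{s_n}^n$-probability tending to $1$. The paper's own proof is a one-line remark that this is ``routine''; you have supplied precisely the routine details (Taylor expansion of the log-likelihood, centering via $\sum_j\epsilon_{n,j}=0$, and a Chebyshev bound on the fluctuation term), so there is no substantive difference in approach.
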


\begin{proof}
This is a routine consequence of Lemma~\ref{lemma:lrA}, as the
hypotheses ensure that the likelihood ratio
$d\varrho_{s_n}^n/d\varrho_{s'_n}^n$ is uniformly close to $1$ with
probability approaching $1$ as $n \rightarrow \infty$.
\end{proof}

A similar argument can be used to establish the following generalization,
which is needed in the case of partitions with $k\geq 3$ classes.  For
$s_1,\ldots,s_k\in\simplexk$, let
$\varrho_{s_1}^{n_1}\otimes\cdots\otimes\varrho_{s_k}^{n_k}$ denote
the product measure on $[k]^{n_1+\cdots+n_k}$ where the first $n_1$
coordinates are i.i.d.\ multinomial-$s_1$, the next $n_2$ are i.i.d.\
multinomial-$s_2$, and so on.

\begin{corollary}\label{cor:prod-multiTV}
Fix $\delta,\varepsilon>0$. For each $i\in [k]$ let $\{s^{i}_{n}
\}_{n\geq 1}$ and $\{t^{i}_{n} \}_{n\geq 1}$ be sequences in
$\simplexk$ all of whose entries are in the interval
$[\delta,1-\delta]$, and let $K^{i}_{n}$ be sequences of nonnegative
integers such that $\sum_{i}K^{i}_{n}=n$. If
$\sum_{i=1}^k\xnorm{t^{i}_{n}-s^{i}_{n}}_{\infty
}<n^{-1/2-\varepsilon}$, then

\begin{equation*}
	\lim_{n\rightarrow\infty}\left\Vert \varrho_{s_n^1}^{K_n^1}\otimes\cdots\otimes
	\varrho_{s_n^k}^{K_n^k}-\varrho_{t_n^1}^{K_n^1}
	\otimes\cdots\otimes\varrho_{t_n^k}^{K_n^k}\right \Vert_{TV}=0.
\end{equation*}

\end{corollary}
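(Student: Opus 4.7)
\emph{Proof plan.} The argument parallels Corollary~\ref{cor:multiTV}, with independence across the $k$ blocks handling the additional complexity. Write $\mu_n := \varrho_{s_n^1}^{K_n^1}\otimes\cdots\otimes\varrho_{s_n^k}^{K_n^k}$ and $\nu_n := \varrho_{t_n^1}^{K_n^1}\otimes\cdots\otimes\varrho_{t_n^k}^{K_n^k}$. By Lemma~\ref{lemma:lrA} it suffices to show that the likelihood ratio $d\mu_n/d\nu_n$ tends to $1$ in $\nu_n$-probability. Since $\nu_n$ is a product over independent blocks, the log-likelihood ratio decomposes as
\[
L_n \;=\; \log\frac{d\mu_n}{d\nu_n} \;=\; \sum_{i=1}^k\sum_{j=1}^{K_n^i}\log\frac{s_{n,X^{(i)}_j}^i}{t_{n,X^{(i)}_j}^i},
\]
where, under $\nu_n$, the coordinates $X^{(i)}_j$ are independent across $(i,j)$ with $X^{(i)}_j\sim t_n^i$.

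\medskip

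The uniform bound $[\delta,1-\delta]$ on all entries of $s_n^i$ and $t_n^i$, together with a two-term Taylor expansion of $\log(1+u)$, shows that each summand above is at most a constant $C=C(\delta)$ times $\|s_n^i-t_n^i\|_\infty$ in absolute value. Block $i$ therefore contributes to $E_{\nu_n}L_n$ an amount $-K_n^i\, D_{KL}(t_n^i\,\|\,s_n^i)$ of magnitude $O(K_n^i\|s_n^i-t_n^i\|_\infty^2)$ (from the quadratic term in the same Taylor expansion, once the linear term is seen to integrate to zero), and, by independence within and across blocks, contributes $O(K_n^i\|s_n^i-t_n^i\|_\infty^2)$ to $\mathrm{Var}_{\nu_n}(L_n)$.

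\medskip

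The heart of the argument is then the combinatorial estimate
\[
\sum_{i=1}^k K_n^i\|s_n^i-t_n^i\|_\infty^2 \;\leq\; \Big(\max_i \|s_n^i-t_n^i\|_\infty\Big)\Big(\max_i K_n^i\Big)\sum_{i=1}^k\|s_n^i-t_n^i\|_\infty \;\leq\; n^{-1/2-\varepsilon}\cdot n\cdot n^{-1/2-\varepsilon} \;=\; n^{-2\varepsilon},
\]
using $\sum_i K_n^i=n$ and $\sum_i\|s_n^i-t_n^i\|_\infty<n^{-1/2-\varepsilon}$. Thus $E_{\nu_n}L_n\to 0$ and $\mathrm{Var}_{\nu_n}(L_n)\to 0$, so Chebyshev's inequality gives $L_n\to 0$ in $\nu_n$-probability; consequently $d\mu_n/d\nu_n\to 1$ in $\nu_n$-probability, and Lemma~\ref{lemma:lrA} delivers the desired conclusion. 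The only subtle point is the combinatorial inequality above: the hypothesis of the corollary is calibrated precisely so that the interplay of the $L^\infty$ perturbation bound with the mass constraint $\sum_i K_n^i=n$ produces net decay.
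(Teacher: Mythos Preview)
Your proof is correct and follows the same approach the paper sketches: both invoke Lemma~\ref{lemma:lrA} after showing that the likelihood ratio is close to $1$ with high probability, and the paper explicitly says only that ``a similar argument'' to Corollary~\ref{cor:multiTV} works here. Your write-up simply fills in the details the paper omits; incidentally, your combinatorial estimate can be streamlined to $\sum_i K_n^i\|s_n^i-t_n^i\|_\infty^2 \le (\max_i\|s_n^i-t_n^i\|_\infty)^2\sum_i K_n^i \le n\cdot n^{-1-2\varepsilon}=n^{-2\varepsilon}$.
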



In dealing with probability measures that are defined as mixtures, the
following simple tool for bounding total variation distance is useful.

\begin{lemma}\label{lemma:mix}
Let $\mu ,\nu $ be probability measures on a finite or countable space
$\mathcal{X}$ that are both mixtures with respect to a common mixing
probability measure $\lambda (d \theta)$, that is, such that there are
probability measures $\mu_{\theta}$ and $\nu_{\theta}$ for which
\[
	\mu =\int \mu_{\theta} \,d\lambda (\theta)
	\quad \text{and} \quad 
	\nu =\int \nu_{\theta} \,d\lambda (\theta).
\]
If $\xnorm{\mu_{\theta}-\nu_{\theta}}_{TV}<\varepsilon$ for all
$\theta$ in a set of $\lambda -$probability $>1-\varepsilon$ then 
\[
	\xnorm{\mu -\nu}_{TV}<2\varepsilon .
\]
\end{lemma}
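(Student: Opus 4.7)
The plan is to first establish the standard \emph{convexity} (or sub-additivity) bound that total variation distance commutes with mixtures in the sense of an inequality:
\[
	\xnorm{\mu - \nu}_{TV} \leq \int \xnorm{\mu_\theta - \nu_\theta}_{TV} \, d\lambda(\theta).
\]
To derive this, I would start from the $\ell^1$ formula in \eqref{eq:definitionTV}, write $\mu(x) - \nu(x) = \int (\mu_\theta(x) - \nu_\theta(x))\, d\lambda(\theta)$, apply the triangle inequality inside the sum over $x \in \mathcal{X}$, and then interchange the sum and the integral (justified by Tonelli since the integrand is nonnegative). This produces exactly the claimed bound.

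Once this convexity inequality is in hand, the proof is a one-line split according to how good the conditional approximation is. Let $G := \{\theta : \xnorm{\mu_\theta - \nu_\theta}_{TV} < \varepsilon\}$, so that by hypothesis $\lambda(G) > 1 - \varepsilon$ and hence $\lambda(G^{\comp}) < \varepsilon$. Split the integral over $G$ and over its complement:
\[
	\int \xnorm{\mu_\theta - \nu_\theta}_{TV} \, d\lambda(\theta)
	= \int_{G} \xnorm{\mu_\theta - \nu_\theta}_{TV} \, d\lambda(\theta)
	+ \int_{G^{\comp}} \xnorm{\mu_\theta - \nu_\theta}_{TV} \, d\lambda(\theta).
\]
On $G$ the integrand is bounded by $\varepsilon$, so the first term is at most $\varepsilon \cdot \lambda(G) \leq \varepsilon$. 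On $G^{\comp}$ the integrand is at most $1$ (since total variation distance between probability measures is always $\leq 1$), so the second term is at most $\lambda(G^{\comp}) < \varepsilon$. Combining yields $\xnorm{\mu - \nu}_{TV} < 2\varepsilon$, which is the claimed estimate.

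There is really no substantive obstacle here: the only thing that requires care is the justification of the convexity inequality, and even that is routine provided $\mathcal{X}$ is countable (so Tonelli applies without measurability concerns). The factor of $2$ in the conclusion comes for free from the union of the two crude bounds on $G$ and $G^{\comp}$, and it is not worth optimizing since the lemma is used only as a qualitative tool for comparing mixtures.
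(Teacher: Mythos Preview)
Your proof is correct. The paper does not actually supply a proof of this lemma --- it is stated as a ``simple tool'' and left to the reader --- so there is nothing to compare against; your argument via the convexity inequality $\xnorm{\mu-\nu}_{TV}\leq \int \xnorm{\mu_\theta-\nu_\theta}_{TV}\,d\lambda(\theta)$ followed by the split over $G$ and $G^{\comp}$ is exactly the standard justification the authors presumably had in mind.
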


Lower bounds on total variation distance between two probabilities
$\mu ,\nu$ are often easier to establish than upper bounds, because
for this one only need find a particular set $B$ such that $\mu
(B)-\nu (B)$ is large. By \eqref{eq:sufficientStat}, it suffices to
look at sets of the form $B=\{Y\in F \}$, where $Y$ is a sufficient
statistic. The following lemma for product Bernoulli measures
illustrates this strategy.  For $\alpha\in[0,1]$, we write $\nu_{\alpha}^n:\varrho_s^n$,
where $s:=(\alpha,1-\alpha)\in\Delta_2$, to denote the product Bernoulli measure
determined by $\alpha$.

\begin{lemma}\label{lemma:bigBernoulliTV}
Fix $\varepsilon >0$. If $\alpha_{m},\beta_{m}$ are sequences in
$[0,1]$ such that $|\alpha_{m}-\beta_{m}|>m^{-1/2+\varepsilon}$, then 
\[
	\lim_{m \rightarrow
	\infty}\xnorm{\nu^{m}_{\alpha_{m}}-\nu^{m}_{\beta_{m}}}_{TV} =1.
\]
\end{lemma}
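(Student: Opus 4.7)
The plan is to pass to the sufficient statistic $S_{m}(x):=\sum_{i=1}^{m}\mathbf{1}_{\{1\}}(x^{i})$, the count of $1$'s among the first $m$ coordinates. Under either product Bernoulli measure, the likelihood ratio $d\nu^{m}_{\alpha_{m}}/d\nu^{m}_{\beta_{m}}$ depends on $x$ only through $S_{m}(x)$, so $S_{m}$ is sufficient in the sense of the discussion around \eqref{eq:sufficientStat}. Consequently, to bound $\xnorm{\nu^{m}_{\alpha_{m}}-\nu^{m}_{\beta_{m}}}_{TV}$ from below it suffices to exhibit a single event depending only on $S_{m}$ whose two probabilities under the two measures are driven far apart.

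The core observation is a separation of scales. Under $\nu^{m}_{\alpha_{m}}$ the variable $S_{m}$ has mean $m\alpha_{m}$ and variance $m\alpha_{m}(1-\alpha_{m})\leq m/4$, and symmetrically under $\nu^{m}_{\beta_{m}}$ with mean $m\beta_{m}$; yet the two means differ by at least $m\,|\alpha_{m}-\beta_{m}|>m^{1/2+\varepsilon}$, which dwarfs the common $O(\sqrt{m})$ standard deviation by a factor of $m^{\varepsilon}\to\infty$. I would therefore take, without loss of generality assuming $\alpha_{m}<\beta_{m}$, the halfway event
\[
B_{m}:=\{x:S_{m}(x)\leq m(\alpha_{m}+\beta_{m})/2\}.
\]

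A routine application of Chebyshev's inequality then gives
$\nu^{m}_{\alpha_{m}}(B_{m}^{\comp})\leq (m/4)\,/\,(m|\alpha_{m}-\beta_{m}|/2)^{2}=O(m^{-2\varepsilon})$,
and by the symmetric estimate $\nu^{m}_{\beta_{m}}(B_{m})=O(m^{-2\varepsilon})$. Hence $\nu^{m}_{\alpha_{m}}(B_{m})-\nu^{m}_{\beta_{m}}(B_{m})\to 1$, which combined with the trivial upper bound $\xnorm{\mu-\nu}_{TV}\leq 1$ delivers the claim. There is no real obstacle here: the Bernoulli variance bound $\alpha(1-\alpha)\leq 1/4$ holds uniformly on $[0,1]$, so no hypothesis keeping $\alpha_{m},\beta_{m}$ away from $\{0,1\}$ is required, and the crude Chebyshev bound already leaves plenty of room — neither the CLT nor any sharper Gaussian tail estimate is needed.
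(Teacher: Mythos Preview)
Your proof is correct and follows essentially the same approach as the paper's: reduce to the sufficient statistic $S_m$, split at the midpoint $\gamma_m=(\alpha_m+\beta_m)/2$, and apply Chebyshev's inequality using the uniform variance bound $m/4$. Your version is in fact more explicit than the paper's, which simply states the two limiting probabilities and cites Chebyshev without writing out the computation.
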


\begin{proof}
Without loss of generality, assume that
$\alpha_{m}<\beta_{m}$, and let $\gamma_{m}=
(\alpha_{m}+\beta_{m})/2$. Denote by $S_{m}$ the sum of the coordinate
variables. Then by Chebyshev's inequality,
\begin{align*}
	\lim_{m \rightarrow \infty}\nu^{m}_{\alpha_{m}}\{S_{m}<m\gamma_{m}
	\}&=1 \quad \text{and}\\
	\lim_{m \rightarrow \infty}\nu^{m}_{\beta _{m}}\{S_{m}<m\gamma_{m}
	\}&=0 .
\end{align*}
\end{proof}

\begin{remark}\label{remark:bigBernoulliTV}
Similar results holds for multinomial and product-multinomial
sampling. (A) If $s_{n},s'_{n}\in \simplexk$ are distinct sequences of
probability distributions on $[k]$ such that for some coordinate $i\in
[k]$ the $i$th entries of $s_{n}$ and $s'_{n}$ differ by at least
$n^{-1/2+\varepsilon}$, then
\[
	\lim_{n \rightarrow
	\infty}\xnorm{\varrho_{s_n}^n-\varrho_{s'_n}^n}_{TV} =1.
\]
(B) If $s^{i}_{n},t^{i}_{n}\in \simplexk$ are distinct sequences of
probability distributions on $[k]$ such that for some pair $i,j\in [k]$
the $j$th entries of $s^{i}_{n}$ and $t^{i}_{n}$ differ by  at least
$n^{-1/2+\varepsilon}$, then for any sequences $K^{i}_{n}$ of
nonnegative integers such that $\sum_{i}K^{i}_{n}=n$,
\[
	\lim_{n \rightarrow \infty}
	\lim_{n\rightarrow\infty}\left\Vert \varrho_{s_n^1}^{K_n^1}\otimes\cdots\otimes
	\varrho_{s_n^k}^{K_n^k}-\varrho_{t_n^1}^{K_n^1}
	\otimes\cdots\otimes\varrho_{t_n^k}^{K_n^k}\right \Vert_{TV}=1.
\]
These statements follow directly from Lemma~\ref{lemma:bigBernoulliTV} by
projection on the appropriate coordinate variable.
\end{remark}

\section{Preliminaries: CP chains and Paintbox
Representation}\label{sec:prelims:paintbox} 

\subsection{Labeled and unlabeled partitions}\label{section:labeled
partitions} For $k,n\in\mathbb{N}=\{1,2,\ldots\}$, a \emph{labeled}
$k$-ary partition $L$ of $[n]$ is an \emph{ordered} collection
$L:=(L_1,\ldots,L_k)$ of disjoint subsets for which
$\bigcup_{i=1}^kL_i=[n]$. An \emph{unlabeled} $k$-ary partition of
$[n]$ is an \emph{unordered} collection $L:=\{ L_1,\ldots,L_r\}$,
where $r\leq k$, of nonempty, disjoint subsets whose union is $[n]$.
The set $\labelednk$ of labeled $k$-ary partitions of $[n]$ can be
naturally identified with the set $[k]^{n}$ of $k-$colorings of the
set $[n]$, via the map
\[
	L \mapsto l^1l^2\cdots l^n \quad \text{where}\quad l^i=j\Leftrightarrow i\in L_j.
\]
Thus, the multinomial-$s$ measure $\varrho_s^n$ defined in the previous
section induces a measure on $\labelednk$, which we will also denote
by $\varrho_s^n$. There is an obvious and  natural projection $\Pi_{n}:\labelednk
\rightarrow \partitionsnk$ from the set  $\labelednk$ of labeled
partitions to the set $\partitionsnk$ of unlabeled partitions given by
\begin{equation}
\label{eq:projection}\Pi_n(L):=\{L_1,\ldots,L_k\}\backslash\{\emptyset\}.
\end{equation} 
This mapping coincides with the natural projection
\[
	\Pi_n:[k]^{n} \rightarrow [k]^{n}/\sim 
\] 
where $\sim$ is the equivalence relation $l^1l^2\cdots l^n \sim
l_{*}^1l_{*}^2\cdots l_{*}^n$ if and only if there exists a
permutation $\sigma$ of $[k]$ such that $l_{*}^{i}=\sigma (l^{i})$ for
each $i\in [n]$. Some of the Markov chains on $\labelednk$ considered
below have transition laws invariant under such permutations $\sigma$
of the labels $[k]$, and in such cases the Markov chain projects via
$\Pi_{n}$ to a Markov chain on the state space
$\partitionsnk$. This is discussed further in
section~\ref{section:projected chains} below.

\subsection{Matrix operations on $\labeledNk$}\label{section:matrix operations}
The cut-and-paste Markov chain on $\labelednk$ can be described by a
product of i.i.d.\ random set-valued matrices with a special
structural property.  

\begin{definition}\label{definition:partition-matrices}
For any subset $S\subset \zz{N}$, define a $k-$ary (or $k\times k$) \emph{partition
matrix} over $S$ to be a $k\times k$ matrix $M$ whose entries $M_{ij}$
are subsets of $S$ such that every column $M^{j}$ is a labeled $k-$ary
partition of $S$.  For any two $k-$ary partition matrices $M,M'$,
define the product $M*M'=MM'$ by
\begin{equation}
(M*M')_{ij}\equiv(MM')_{ij}:=\bigcup_{1\leq l\leq
k}({M}_{il}\cap{M}'_{lj}), \;\;\text{for all} \; 1\leq i, j\leq
k.\label{eq:boolean matrix product}\end{equation} 
\end{definition}

We write $\matrixnk$ to denote the space of $k\times k$ partition matrices of $[n]$.
Observe that the matrix product defined by
\eqref{eq:boolean matrix product} makes sense for matrices with
entries in any distributive lattice, provided $\cup ,\cap$ are
replaced by the lattice operations. 

As each column of any $M\in\matrixnk$ is a $k$-ary partition of $[n]$,
the set $\matrixnk$ of $k-$ary partition matrices over $[n]$ can be
identified with $\klabelednk$. Furthermore, a $k-$ary partition
matrix $M$ induces a mapping $M:\labelednk \rightarrow \labelednk$, by
\[
	( ML )_{i}=\bigcup_{j} M_{ij}L_{j}.
\]

\begin{lemma}\label{lemma:partition matrix}
Let $k,n\in\mathbb{N}$.  Then
\begin{itemize}
	\item[(i)] for each $L\in\labelednk$, $ML\in\labelednk$ for all $M\in\matrixnk$;
	\item[(ii)] for any $L,L'\in\labelednk$, there exists $M\in\matrixnk$ such that $ML=L'$;
	\item[(iii)] the pair $(\matrixnk,*)$ is a monoid (i.e.,
	semigroup with identity) for every
	$n\in\mathbb{N}$.
\end{itemize}
\end{lemma}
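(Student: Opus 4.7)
The three claims are all bookkeeping in the distributive lattice of subsets of $[n]$, so the plan is to verify each directly from the definitions, using only that each column of a partition matrix is a labeled partition.

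For (i), the plan is to check that the sets $(ML)_i := \bigcup_j (M_{ij}\cap L_j)$, $i\in[k]$, are pairwise disjoint and cover $[n]$. Swapping the order of unions, $\bigcup_i (ML)_i = \bigcup_j \big(L_j \cap \bigcup_i M_{ij}\big) = \bigcup_j L_j = [n]$, where I use that column $j$ of $M$ is a partition of $[n]$. For disjointness of $(ML)_i$ and $(ML)_{i'}$ with $i\neq i'$, I would expand the intersection as a double union over pairs $(j,j')$ and split into cases: when $j=j'$, disjointness of column $j$ of $M$ forces $M_{ij}\cap M_{i'j}=\emptyset$; when $j\neq j'$, disjointness of $L$ forces $L_j\cap L_{j'}=\emptyset$.

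For (ii), the plan is to exhibit an $M$ explicitly. Set $M_{ij} := L_i'$ for every $j$, i.e., every column of $M$ is the partition $L'$ itself. Then $M\in\matrixnk$ trivially, and $(ML)_i = \bigcup_j (L_i'\cap L_j) = L_i' \cap [n] = L_i'$, as required.

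For (iii), the plan is to verify closure, associativity, and the existence of a two-sided identity. Closure of $*$ on $\matrixnk$ is the same computation as in (i), with $L_j$ replaced by the $j$-th column of a second matrix $M'$: one checks that for every fixed $j$, the column $\{(MM')_{ij}\}_{i\in[k]}$ is a partition of $[n]$, using the column-partition property of $M$ and $M'$ exactly as above. Associativity is immediate from the distributivity of $\cap$ over $\cup$: both $(M*M')*M''$ and $M*(M'*M'')$ expand to the same triple union $\bigcup_{l,l'} (M_{il}\cap M'_{ll'}\cap M''_{l'j})$. The identity is the matrix $I$ with $I_{ii}:=[n]$ and $I_{ij}:=\emptyset$ for $i\neq j$, whose columns are (degenerate) labeled partitions; a one-line check gives $I*M = M*I = M$.

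There is no real obstacle; the only point to be careful about is keeping track of which column-partition property is being used at each step. The one mild subtlety is that the construction of the identity $I$, and of the matrix in part (ii) when some $L_j$ is empty, both rely on the convention that labeled partitions are allowed to have empty blocks; without this the monoid structure in (iii) would fail.
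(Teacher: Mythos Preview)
Your proof is correct and follows exactly the approach indicated in the paper, which in fact only remarks that the proof is elementary and follows from the definition of the product \eqref{eq:boolean matrix product}, noting explicitly that the identity is the matrix with diagonal entries $[n]$ and off-diagonal entries $\emptyset$. Your write-up supplies the routine details the paper omits, including the useful observation that allowing empty blocks in labeled partitions is what makes the identity element (and the construction in (ii)) legitimate.
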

The proof is elementary and follows mostly from the definition
\eqref{eq:boolean matrix product} (the semigroup identity is the
matrix whose diagonal entries are all $[n]$ and whose off-diagonal
entries are $\emptyset$). We now describe the role of the
semigroup $(\matrixnk,*)$ in describing the transitions of the
cut-and-paste Markov chain. 

\subsection{Cut-and-paste Markov
chains}\label{section:cut-and-paste} Fix $n,k\in\mathbb{N}$, let $\mu$
be a probability measure on $\matrixnk$, and let $\varrho_0$ be a
probability measure on $\labelednk$.  The cut-and-paste Markov
chain $X=(X_m)_{m\geq 0}$ on $\labelednk$ with initial distribution
$\varrho_0$ and directing measure $\mu$ is constructed as follows.  Let
$X_0\sim\varrho_0$ and, independently of $X_0$, let $M_1,M_2,\ldots$
be i.i.d.\ according to $\mu$. Define
\begin{equation}\label{eq:cut-and-paste}X_m=M_mX_{m-1}=M_mM_{m-1}\cdots
M_1 X_0,\quad \text{for} \;\; m\geq1.\end{equation}
We call any Markov chain with the above dynamics a
$\cp_n(\mu;\varrho_0)$ chain, or simply a $\cp_n(\mu)$ chain if the
initial distribution is unspecified. Henceforth we will use the
notation $X^{i}_{m}$ to denote the $i$th coordinate variable in
$X_{m}$ (that is, $X^{i}_{m}$ is the color of the site $i\in [n]$
when $X_{m}$ is viewed as an element of $[k]^{[n]}$). 

Our main results concern the class of  cut-and-paste chains whose directing
measures $\mu=\mu_{\Sigma }$ are mixtures of product multinomial
measures $\mu_{S}$, where $S$ ranges over the set $\ksimplexk$ of
$k\times k$ column-stochastic matrices. For any  $S\in\ksimplexk$, the
product multinomial measure $\mu_{S}$ is defined by
\begin{equation}\label{eq:prod-multinomial}
\mu_S(M):=\prod_{j=1}^k\prod_{i=1}^nS(M^j(i),j)\quad \text{for} \;\;
M\in\matrixnk,
\end{equation}
where $M^{j} (i)=\sum_{r}r\mathbf{1}\{i\in M_{rj} \}$ denotes the
index $r$ of the row such that $i$ is an element of $M_{rj}$.  (In
other words, the columns of $M\sim\mu_S$ are independent labeled
$k-$ary partitions, and in each column $M^{j}$ the elements $i\in [n]$
are independently assigned to rows $r\in [k]$ according to draws from the
multinomial distribution $S^{j}$ determined by the $j$th column of
$S$.)  For any Borel probability measure $\Sigma$ on
$\ksimplexk $, we write $\mu_\Sigma$ to denote the
$\Sigma$-mixture of the measures $\mu_S$ on $\matrixnk$, that is,

\begin{equation}\label{eq:matrix
mixture}\mu_\Sigma(\cdot):=\int_{\ksimplexk}\mu_S(\cdot)\Sigma(dS).\end{equation} 

\noindent Crane \cite{Crane2012} has shown that every exchangeable,
Feller Markov chain on the the space $[k]^{\zz{N}}$ of $k-$colorings
of the positive integers is a cut-and-paste chain with directing
measure of the form \eqref{eq:matrix mixture}, and so henceforth, we
shall refer to such chains as \emph{exchangeable Feller cut-and-paste
chains}, or EFCP chains for short.

An EFCP chain on $[k]^{[n]}$ (or $[k]^{\zz{N}}$) with directing
measure $\mu =\mu_{\Sigma}$ can be constructed in two steps, as
follows. First, choose i.i.d.\ stochastic matrices $S_1,S_2,\ldots$
with law $\Sigma$, all independent of $X_0$; second, given
$X_0,S_1,S_2,\ldots$, let $M_1,M_2,\ldots$ be conditionally
independent $k-$ary partition matrices with laws $M_i\sim\mu_{S_i}$
for each $i=1,2,\ldots$, and define the cut-and-paste chain $X_{m}$ by
equation \eqref{eq:cut-and-paste}. This construction is fundamental to
our arguments, and so henceforth, when considering an EFCP chain with
directing measure $\mu_{\Sigma }$, we shall assume that it is defined
on a probability space together with a paintbox sequence
$S_{1},S_{2},\dotsc$.

 For each $m\in\mathbb{N}$, set
\begin{equation}\label{eq:qmkxk}
	 Q_m:=S_mS_{m-1}\cdots S_1.
\end{equation}
Note that $Q_{m}$ is itself a stochastic matrix. Denote by
$\mathcal{S}$ the $\sigma -$algebra generated by the paintbox sequence
$S_{1},S_{2},\dotsc$. 

\begin{proposition}\label{prop:cond indep}
Given $\mathcal{G}:=\sigma (X_{0})\vee \mathcal{S}$, the $n$ 
coordinate sequences $(X_m^i)_{m\geq 0}$, where
$i\in [n]$, are conditionally independent versions of a
time-inhomogeneous Markov chain on $[k]$ with one-step transition
probability matrices $S_{1},S_{2},\dotsc$. Thus, in particular, for
each $m\geq 1$,
\begin{equation}\label{eq:joint law}
	\mathbf{P} (X^{i}_{m}=x^{i}_{m} \; \text{for each} \; i\in
	[n]\,|\, \mathcal{G})			=\prod_{i=1}^{n}
	Q_m(x_m^i,X_0^i). 
\end{equation}
\end{proposition}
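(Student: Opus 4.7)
The plan is to prove by induction on $m$ that, conditional on $\mathcal{G}$, the joint law of $(X_1^i,\ldots,X_m^i)_{i\in[n]}$ is the product of $n$ independent time-inhomogeneous Markov chain trajectories on $[k]$, each started from the corresponding $X_0^i$ and driven by the common transition matrices $S_1,\ldots,S_m$. Summing out intermediate states in the resulting product formula, and recognising $Q_m=S_m\cdots S_1$ as the $m$-step transition matrix of such a chain, then yields \eqref{eq:joint law} at once.

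The base case $m=0$ is immediate. For the inductive step, set $\mathcal{G}_m:=\mathcal{G}\vee\sigma(M_1,\ldots,M_m)$, so that $X_1,\ldots,X_m$ are $\mathcal{G}_m$-measurable. By construction of the EFCP chain, $M_{m+1}$ is conditionally independent of $\mathcal{G}_m$ given $\mathcal{S}$, with conditional law $\mu_{S_{m+1}}$. Unpacking the definition \eqref{eq:prod-multinomial}, under $\mu_{S_{m+1}}$ the $k$ columns of $M_{m+1}$ are independent labeled $k$-ary partitions of $[n]$, and within the $j$-th column each element $i\in[n]$ is independently assigned to row $r$ with probability $S_{m+1}(r,j)$.

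The essential observation is that by \eqref{eq:boolean matrix product} and the disjointness of the blocks of the labeled partition $X_m$, a site $i$ lies in $(M_{m+1}X_m)_r$ if and only if $i\in M_{m+1,\,r,\,X_m^i}$. Hence $X_{m+1}^i$ is exactly the random row to which $i$ is assigned in column $X_m^i$ of $M_{m+1}$, and these row-assignments are independent across $i$ under $\mu_{S_{m+1}}$, whether or not several sites share a common current color. Consequently,
\[
	\mathbf{P}\bigl(X_{m+1}^i=x^i\text{ for all }i\in[n]\bigm|\mathcal{G}_m\vee\sigma(S_{m+1})\bigr)=\prod_{i=1}^{n}S_{m+1}(x^i,X_m^i),
\]
and combining this with the inductive hypothesis via the tower property, then taking conditional expectation against $\mathcal{G}$, produces the desired factored law over times $1,\ldots,m+1$.

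I expect the main technical subtlety to be precisely the case in which distinct sites share the same current color $X_m^i=X_m^{i'}$: both are then updated by reading off the same column of $M_{m+1}$, yet their updates must remain conditionally independent. This is resolved automatically by the product structure of \eqref{eq:prod-multinomial}, which already makes the row-placements of distinct sites within any fixed column independent, so no further argument is required beyond careful bookkeeping of the definitions.
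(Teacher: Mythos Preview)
Your proof is correct and follows essentially the same approach as the paper: induction on $m$, with the inductive step computing the conditional law of $X_{m+1}$ given the $\sigma$-algebra generated by $\mathcal{G}$ and $M_1,\ldots,M_m$ (your $\mathcal{G}_m$, the paper's $\mathcal{F}_m$), using the product-multinomial structure of $\mu_{S_{m+1}}$ to obtain $\prod_i S_{m+1}(x^i,X_m^i)$. Your explicit treatment of the case where distinct sites share a color, and your identification of $X_{m+1}^i$ as the row to which $i$ is assigned in column $X_m^i$ of $M_{m+1}$, make the mechanism more transparent than the paper's somewhat terser account, but the argument is the same.
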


\begin{proof}
We prove that the Markov property holds by induction on $m$. The case
$m=1$ follows directly by \eqref{eq:prod-multinomial}, as this implies
that, conditional on $\mathcal{G}$, the coordinate random variables
$X^{i}_{1}$ are independent, with multinomial marginal conditional
distributions given by the columns of $S_{1}$. Assume, then, that the
assertion is true for some $m\geq  1$.  Let $\mathcal{F}_m$ be the
$\sigma$-algebra generated by $\mathcal{G}$ and the random matrices
$M_{1},M_{2},\dotsc ,M_{m}$.  Since the specification
\eqref{eq:cut-and-paste} expresses $X_m$ as a function of
$X_0,M_1,M_2,\ldots, M_{m}$, the random variables $X^{i}_{t}$, where
$t\leq m$, are measurable with respect to $\mathcal{F}_{m}$. Moreover,
given $\mathcal{G}$ the random matrix $M_{m+1}$ is conditionally
independent of $\mathcal{F}_{m}$, with conditional distribution
\eqref{eq:prod-multinomial} where $S=S_{m+1}$.  Equation
\eqref{eq:prod-multinomial} implies that, conditional on $\mathcal{G}$
the columns $M^{c}_{m+1}$ of $M_{m+1}$ are independent $k-$ary
partitions obtained by independent multinomial$-S^{c}$
sampling. Consequently,
\begin{align*}
	P (X^{i}_{m+1}=x^{i}_{m+1} \;\; \forall \; i\in [n]
	\;|\, \mathcal{F}_{m}) 
	&=  P ((M_{m+1}X_{m})^{i}=x^{i}_{m+1} \;\; \forall \; i\in [n]
	\;|\, \mathcal{F}_{m}) \\
	&=  P ((M_{m+1}X_{m})^{i}=x^{i}_{m+1} \;\; \forall \; i\in [n]
	\;|\, \mathcal{G}\vee \sigma (X_{m})) \\
	&=\prod_{i=1}^{n} S_{m+1} (x^{i}_{m+1},X^{i}_{m}) ,
\end{align*}
the second equality by the induction hypothesis and the third by
definition of the probability measure $\mu_{S_{m+1}}$. This proves the
first assertion of the proposition. The equation \eqref{eq:joint law}
follows directly.
\end{proof}

Proposition~\ref{prop:cond indep} shows that for any $n\geq 1$ a
version of the EFCP on $[k]^{[n]}$ can be constructed by first
generating a paintbox sequence $S_{m}$ and then, conditional on
$\mathcal{S}$, running independent, time-inhomogeneous Markov chains
$X^{i}_{m}$ with one-step transition probability matrices
$S_{m}$. From this construction it is evident that a version of the
EFCP on the infinite state space $[k]^{\zz{N}}$ can be constructed by
running countably many conditionally independent Markov chains
$X^{i}_{m}$, and that for any $n\in \zz{N}$ the projection of this
chain to the first $n$ coordinates is a version of the EFCP on
$[k]^{[n]}$.

\section{Random stochastic matrix products}\label{section:rm}

For any EFCP chain $\{X_{m} \}_{m\geq 0}$, Proposition \ref{prop:cond
indep} directly relates the conditional distribution of $X_m$ to the
product $Q_{m}=S_{m}S_{m-1}\dotsb S_{1}$ of i.i.d.\ random stochastic
matrices. Thus, the rates of convergence of these chains are at least
implicitly determined by the contractivity properties of the random
matrix products $Q_{m}.$ The asymptotic behavior of i.i.d. random
matrix products has been thoroughly investigated, beginning with the
seminal paper of Furstenberg and Kesten \cite{FurstenbergKesten1960}:
see \cite{BougerolLacroix} and \cite{goldsheid-margulis} for extensive
reviews. However, the random matrices $S_{i}$ that occur in the
paintbox representation of the $\cp_n(\mu_{\Sigma})$ chain are
not necessarily invertible, so much of the theory developed in
\cite{BougerolLacroix} and \cite{goldsheid-margulis}  doesn't
apply. On the other hand, the random matrices $S_{t}$ are 
\emph{column-stochastic}, and so  the deeper results of
\cite{BougerolLacroix} and \cite{goldsheid-margulis} are not needed
here. In this section we collect the results concerning the
contraction rates of the products $Q_{m}$ needed for the study of the
EFCP chains, and give elementary proofs of these
results.

Throughout this section assume that $\{S_{i} \}_{i\geq 1}$ is a
sequence of independent, identically distributed $k\times k$ random
column-stochastic matrices, with common distribution $\Sigma$, and let 
\[
	Q_{m}=S_{m}S_{m-1}\dotsb S_{1}.
\]

\subsection{Asymptotic Collapse of the Simplex}\label{ssec:collapse}
In the theory of random matrix products, a central role is played by
the induced action on projective space. In the theory of products of
random \emph{stochastic} matrices an analogous role is played by the
action of the matrices on the simplex $\simplexk$.  By definition, the
simplex $\simplexk$ consists of all convex combinations of the unit
vectors $e_1,e_2,\ldots,e_k$ of $\zz{R}^{k}$; since each column of a
$k\times k$ column-stochastic matrix $S\in\ksimplexk$ lies in
$\simplexk$, the mapping $v \mapsto Sv$ preserves $\simplexk$.  This
mapping is \emph{contractive} in the sense that it is Lipschitz
(relative to the usual Euclidean metric on $\zz{R}^{k}$) with
Lipschitz constant $\leq 1$.

The simplex $\simplexk$ is contained in a translate of the
$(k-1)$-dimensional vector subspace $V=V_{k}$ of $\zz{R}^{k}$
consisting of all vectors orthogonal to the vector $\mathbf{1}=
(1,1,\dotsc ,1)^{T}$ (equivalently, the subspace with basis
$e_{i}-e_{i+1}$ where $1\leq i\leq k-1$).  Any stochastic matrix $A$
leaves the subspace $V$ invariant, and hence induces a linear
transformation $A|V:V \rightarrow V$. Since this transformation is
contractive,  its \emph{singular values} are all between
$0$ and $1$. (Recall that the singular values of a $d\times d$ matrix
$S$ are the square roots of the eigenvalues of the nonnegative
definite matrix $S^{T}S$. Equivalently, they are the lengths of the
principal axes of the ellipsoid $S (\mathbb{S}^{d-1})$, where
$\mathbb{S}^{d-1}$ is the unit sphere in $\zz{R}^{d}$.) Denote the
singular values of the restriction $Q_{n}|V$ by
\begin{equation}\label{eq:svs}
	1\geq \lambda_{n,1}\geq \lambda_{n,2}\geq \dotsb \geq \lambda_{n,k-1}\geq 0.
\end{equation}
Because the induced mapping $Q_{n}:\simplexk \rightarrow \simplexk$ is
affine, its Lipschitz constant is just the largest singular value
$\lambda_{n,1}$.

\begin{proposition}\label{proposition:simplex-collapse}
Let $(S_{i})_{i\geq 1}$ be independent, identically distributed $k\times k$
column-stochastic random matrices, and let $Q_{m}=S_{m}S_{m-1}\dotsb
S_{1}$. Then 
\begin{equation}\label{eq:collapse}
	\lim_{m \rightarrow \infty}\text{\rm diameter} ( Q_{m} (\Delta_{k})) =0.
\end{equation} 
if and only if there exists $m\geq 1$ such that with positive
probability the largest singular value $\lambda_{m,1}$ of $Q_{m}|V$ is
strictly less than $1$.  In this case,
\begin{equation}\label{eq:exp-simplex-contraction}
	\limsup_{m \rightarrow \infty} \text{\rm diameter} ( Q_{m}
	(\Delta_{k}))^{1/m} <1 \quad \text{almost surely.}
\end{equation}
\end{proposition}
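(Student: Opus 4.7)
The plan is to relate the diameter of $Q_m(\simplexk)$ to the top singular value $\lambda_{m,1}$ of the restriction $Q_m|V$, and then to analyze the almost-sure behavior of $\lambda_{m,1}$ via submultiplicativity of operator norms. First I would establish a purely linear-algebraic comparison between diameter and top singular value: since $Q_m(\simplexk)$ is the convex hull of the columns $Q_m e_j$, we have $\diam(Q_m\simplexk) = \max_{i\neq j}|Q_m(e_i-e_j)|$, which is bounded above by $\sqrt{2}\,\lambda_{m,1}$ because $e_i-e_j\in V$ has Euclidean norm $\sqrt{2}$. For the reverse bound, any unit vector $v=\sum_i c_i e_i\in V$ satisfies $\sum c_i=0$ and $\sum c_i^2=1$, so $Q_m v = \sum_i c_i(Q_m e_i-Q_m e_1)$ has norm at most $\sqrt{k}\,\diam(Q_m\simplexk)$ by Cauchy--Schwarz, giving $\lambda_{m,1}\leq\sqrt{k}\,\diam(Q_m\simplexk)$. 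Thus diameter and top singular value agree up to constants depending only on $k$, and the proposition reduces to the behavior of $\lambda_{m,1}$.

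Next I would note that $\lambda_{m,1}$ is monotone nonincreasing in $m$: because each $S_i$ is a contraction on $V$, submultiplicativity of the operator norm gives $\lambda_{m+1,1}\leq \|S_{m+1}|V\|\cdot\lambda_{m,1}\leq\lambda_{m,1}$, so $\lambda_{m,1}\searrow\lambda_\infty$ almost surely. This immediately yields the easy direction of the proposition: if $P(\lambda_{m,1}<1)=0$ for every $m$, then $\lambda_{m,1}=1$ a.s. for all $m$, and the comparison in the first paragraph forces $\diam(Q_m\simplexk)\geq 1/\sqrt{k}$ a.s., so \eqref{eq:collapse} cannot hold.

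For the nontrivial direction, assume $P(\lambda_{m_0,1}<1)>0$ for some $m_0$, so $E[\log\lambda_{m_0,1}]\in[-\infty,0)$. Decompose into i.i.d.\ blocks of length $m_0$: writing $Q_{jm_0} = \widetilde{Q}^{(j)}\cdots\widetilde{Q}^{(1)}$ with $\widetilde{Q}^{(r)}=S_{rm_0}\cdots S_{(r-1)m_0+1}$ i.i.d.\ as $Q_{m_0}$, submultiplicativity gives $\log\lambda_{jm_0,1}\leq\sum_{r=1}^j Y_r$, where $Y_r:=\log\lambda^{(r)}_{m_0,1}\leq 0$ are i.i.d.\ with mean $E[\log\lambda_{m_0,1}]<0$. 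The strong law of large numbers (valid for nonpositive i.i.d.\ summands even when the mean is $-\infty$, by truncation) gives
\begin{equation*}
\limsup_{j\to\infty}\frac{1}{jm_0}\log\lambda_{jm_0,1}\leq\frac{E[\log\lambda_{m_0,1}]}{m_0}<0\quad\text{a.s.}
\end{equation*}
Interpolating to arbitrary $m$ via monotonicity (if $jm_0\leq m<(j+1)m_0$ then $\lambda_{m,1}\leq\lambda_{jm_0,1}$ and $jm_0/m\to 1$) extends the bound to the full sequence, and combining with the diameter--singular value comparison yields \eqref{eq:exp-simplex-contraction}. The main obstacle is the case $E[\log\lambda_{m_0,1}]=-\infty$; the cleanest treatments are either the SLLN truncation just mentioned, or an appeal to Kingman's subadditive ergodic theorem applied to the stationary subadditive family $\{\log\lambda_{n,1}\}$, which directly produces an almost-sure Lyapunov exponent $\gamma=\inf_n n^{-1}E[\log\lambda_{n,1}]$ and converts the probabilistic hypothesis immediately into the strict inequality $\gamma<0$.
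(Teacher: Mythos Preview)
Your argument is correct and follows the same blueprint as the paper's proof: reduce to the operator norm $\lambda_{m,1}$ of $Q_m|V$, break the product into i.i.d.\ blocks of length $m_0$, and apply the strong law of large numbers together with submultiplicativity to force exponential decay. The diameter--singular value comparison you spell out in the first paragraph is exactly what the paper uses implicitly when it identifies ``Lipschitz constant of $Q_m$ on $\simplexk$'' with $\lambda_{m,1}$, so on that point you are simply more explicit.

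The one place where the paper is marginally cleaner is in the application of the SLLN. Rather than applying it to the variables $Y_r=\log\lambda^{(r)}_{m_0,1}$ (which, as you note, forces you to deal with the possibility $E[Y_r]=-\infty$ via truncation or Kingman), the paper first fixes an $\varepsilon>0$ with $P(\lambda_{m_0,1}<1-\varepsilon)>0$ and applies the SLLN to the \emph{indicators} $\mathbf{1}\{\lambda^{(r)}_{m_0,1}<1-\varepsilon\}$. A positive asymptotic fraction of contracting blocks then gives $\lambda_{jm_0,1}\leq(1-\varepsilon)^{cj}$ directly, with no integrability issue to discuss. Your route is perfectly valid, but the indicator trick sidesteps the edge case entirely and is worth knowing.
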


\begin{proof}
In order that the asymptotic collapse property \eqref{eq:collapse}
holds it is necessary that for some $m$ the largest singular value of
$Q_{m}|V$ be less than one. (If not then for each $m$ there would
exist points $u_{m},v_{m}\in \simplexk$ such that the length of $Q_{m}
(u_{m}-v_{m})$ is at least the length of $u_{m}-v_{m}$; but this would
contradict \eqref{eq:collapse}.) Conversely, if for some $\varepsilon
>0$ the largest singular of $Q_{m}|V$ is less than $1-\varepsilon $
with positive probability then with probability $1$ infinitely many of
the matrix products $S_{mn+m}S_{mn+m-1}\dotsb S_{mn+1}$ have largest
singular value less than $1-\varepsilon$.  Hence, the Lipschitz constant
of the mapping on $\simplexk$ induced by $Q_{mn}$ must converge to $0$
as $n \rightarrow \infty$. In fact even more is true: the asymptotic
fraction as $n \rightarrow \infty$ of blocks where
$S_{mn+m}S_{mn+m-1}\dotsb S_{mn+1}$ has largest singular value
$<1-\varepsilon$ is positive, by strong law of large numbers, and so the Lipschitz constant
of $Q_{mn}:\simplexk \rightarrow \simplexk$ decays exponentially.
\end{proof}

\begin{hypothesis}\label{hypothesis:positive entries} For some integer
$m\geq 1$ the event that all entries of
$Q_{m}$ are positive has positive probability.
\end{hypothesis}

\begin{corollary}\label{corollary:hyp-positivity}
Hypothesis \ref{hypothesis:positive entries} implies the asymptotic collapse
property \eqref{eq:collapse}.
\end{corollary}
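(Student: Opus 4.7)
The plan is to invoke Proposition \ref{proposition:simplex-collapse}: it suffices to produce some $m\geq 1$ such that $\mathbf{P}(\lambda_{m,1}<1)>0$.

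First, I would upgrade Hypothesis \ref{hypothesis:positive entries} to a uniform lower bound. The event $\{\min_{i,j} Q_{m_{0}}(i,j)>0\}$ is the increasing union over $n\in\mathbb{N}$ of the events $\{\min_{i,j} Q_{m_{0}}(i,j)\geq 1/n\}$, so countable additivity supplies an integer $m_{0}\geq 1$, an $\epsilon>0$, and a $p>0$ with $\mathbf{P}(\min_{i,j} Q_{m_{0}}(i,j)\geq\epsilon)\geq p$.

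Second, on this event I would apply the standard Doeblin coupling to produce an $\ell^{1}$ contraction of $Q_{m_{0}}$ on $V$. Any $v\in V$ decomposes as $v=\tfrac{1}{2}\|v\|_{1}(u-u')$ for probability vectors $u,u'\in\simplexk$, and the minorization $Q_{m_{0}}(i,j)\geq\epsilon$ together with the usual coupling gives $\|Q_{m_{0}}u-Q_{m_{0}}u'\|_{1}\leq(1-k\epsilon)\|u-u'\|_{1}$, hence $\|Q_{m_{0}}v\|_{1}\leq(1-k\epsilon)\|v\|_{1}$ for every $v\in V$.

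Third, I would promote this $\ell^{1}$ bound to a strict $\ell^{2}$ contraction by iterating over independent blocks of the paintbox sequence. Setting $B_{j}:=S_{jm_{0}}\cdots S_{(j-1)m_{0}+1}$ yields i.i.d.\ copies of $Q_{m_{0}}$, and on the event $\bigcap_{j=1}^{N}\{\min_{i,l} B_{j}(i,l)\geq\epsilon\}$, which has probability at least $p^{N}>0$, the product $Q_{Nm_{0}}=B_{N}\cdots B_{1}$ satisfies $\|Q_{Nm_{0}}v\|_{1}\leq(1-k\epsilon)^{N}\|v\|_{1}$ for all $v\in V$. The elementary inequalities $\|w\|_{2}\leq\|w\|_{1}\leq\sqrt{k}\|w\|_{2}$ on $\mathbb{R}^{k}$ then give $\lambda_{Nm_{0},1}\leq\sqrt{k}(1-k\epsilon)^{N}$ on this event, which is strictly less than $1$ for all sufficiently large $N$. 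Proposition \ref{proposition:simplex-collapse} then yields \eqref{eq:collapse}.

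The main friction is in step three: the Doeblin bound lives naturally in $\ell^{1}$, whereas the hypothesis of Proposition \ref{proposition:simplex-collapse} is phrased in terms of Euclidean singular values. A single column-stochastic matrix with all entries $\geq\epsilon$ need not be a strict $\ell^{2}$ contraction on $V$ (for $k\geq 3$, the norm-equivalence factor of $\sqrt{k}$ can exceed $1/(1-k\epsilon)$), so iterating over the i.i.d.\ block structure of the paintbox sequence is essential to absorb that factor.
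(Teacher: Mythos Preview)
Your argument is correct and takes a genuinely different route from the paper's. The paper dispatches the corollary in one line via Perron--Frobenius: on the event that $Q_m$ has all entries positive, the eigenvalue $1$ is simple and every other eigenvalue of $Q_m$ has modulus strictly below $1$, from which it concludes $\lambda_{m,1}<1$. You instead extract a uniform Doeblin minorization, obtain the explicit $\ell^1$-contraction factor $1-k\epsilon$ on $V$, and then iterate over i.i.d.\ $m_0$-blocks to transfer this to the Euclidean operator norm via the equivalence $\|\cdot\|_2\leq\|\cdot\|_1\leq\sqrt{k}\,\|\cdot\|_2$. The paper's route is shorter and connects the result to classical spectral theory, but it silently passes from the spectral radius of $Q_m|V$ to its largest singular value, leaning on the earlier assertion that column-stochastic matrices act as Euclidean contractions on $V$. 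Your ``friction'' remark is exactly on point here: a single column-stochastic matrix with strictly positive entries need not satisfy $\|Q|V\|_{\ell^2\to\ell^2}<1$ (perturb a matrix with two equal columns to see this for $k\geq 3$), so your block-iteration is a clean way to close that gap. What you gain is a self-contained, quantitative bound $\lambda_{Nm_0,1}\leq\sqrt{k}(1-k\epsilon)^N$; what you pay is a few extra lines compared to the Perron--Frobenius one-liner.
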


\begin{proof}
It is well known that if a stochastic matrix has all entries strictly
positive then its only eigenvalue of modulus $1$ is $1$, and this
eigenvalue is simple (see, for instance, the discussion of the
Perron-Frobenius theorem in the appendix of
\cite{karlin-taylor}). Consequently, if $Q_{m}$ has all entries
positive then $\lambda_{m,1}<1$.
\end{proof}

\subsection{The induced Markov chain on the
simplex}\label{section:induced simplex} The sequence of random matrix
products $(Q_m)_{m\geq 1}$ induce a Markov chain on the simplex
$\simplexk$ in the obvious way: for any initial vector
$Y_0\in\simplexk$ independent of the sequence $(S_m)_{m\geq 0}$, put
\begin{equation}\label{eq:induced simplex}
				   Y_m=Q_mY_0.
\end{equation}
That the sequence $\{Y_{m} \}_{m\geq 0}$ is a Markov chain follows
from the assumption that the matrices $S_{i}$ are i.i.d. Since matrix
multiplication is continuous, the induced Markov chain is Feller
(relative to the usual topology on $\simplexk$). Consequently, since
$\simplexk$ is compact, the induced chain has a stationary
distribution, by the usual Bogoliubov-Krylov argument (see, e.g.,
\cite{petersen}). 

\begin{proposition}\label{proposition:ergodicity-inducedMC}
The stationary distribution of the induced Markov chain on the simplex
is unique if and only if the asymptotic collapse property
\eqref{eq:collapse} holds. 
\end{proposition}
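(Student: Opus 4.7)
The plan is to prove the two implications separately. The forward implication (asymptotic collapse implies uniqueness) I would handle by a synchronous coupling. Suppose $\pi_1$ and $\pi_2$ are invariant measures on $\simplexk$. On a common probability space carrying the paintbox sequence $(S_m)_{m\geq 1}$, I would take $Y_0^{(i)} \sim \pi_i$ (jointly, in any coupling) independent of $(S_m)$ and set $Y_m^{(i)} = Q_m Y_0^{(i)}$. Invariance gives $Y_m^{(i)} \sim \pi_i$ for every $m$, while both $Y_m^{(1)}$ and $Y_m^{(2)}$ lie in $Q_m(\simplexk)$, whose diameter tends to zero almost surely by \eqref{eq:collapse}. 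Hence $\|Y_m^{(1)} - Y_m^{(2)}\| \to 0$ almost surely, and for any Lipschitz continuous $f$ on $\simplexk$ bounded convergence yields $\int f\,d\pi_1 = \int f\,d\pi_2$ in the limit $m \to \infty$. Since Lipschitz functions separate Borel probability measures on the compact space $\simplexk$, this forces $\pi_1 = \pi_2$.

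For the reverse implication (uniqueness implies collapse) I would argue the contrapositive: if \eqref{eq:collapse} fails, I must exhibit two distinct invariant measures. I would work with the ``reversed'' product $\tilde Q_m := S_1 S_2 \cdots S_m$, which has the same law as $Q_m$ by exchangeability of $(S_j)$, so Proposition~\ref{proposition:simplex-collapse} applied to $\tilde Q_m$ also fails. The nested nonempty compact convex sets $\tilde Q_m(\omega)\simplexk$ then decrease to a random compact convex set $\tilde K(\omega) := \bigcap_m \tilde Q_m(\omega)\simplexk$, and $\operatorname{diam}\tilde K$ is shift-invariant in law while also satisfying $\operatorname{diam}\tilde K(\omega)\leq\operatorname{diam}\tilde K(\sigma\omega)$ (because $\tilde K(\omega) = S_1(\omega)\tilde K(\sigma\omega)$ and $S_1$ is a contraction). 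Together with ergodicity of the shift on i.i.d.\ sequences, this forces $\operatorname{diam}\tilde K$ to be an almost-sure constant that is positive under the failure of collapse. The crucial structural identity
\[
  \tilde K(\omega) = S_1(\omega)\,\tilde K(\sigma\omega)
\]
follows from $\tilde Q_m(\omega) = S_1(\omega)\tilde Q_{m-1}(\sigma\omega)$ combined with the commutation of continuous maps with decreasing intersections of compact sets. Any measurable equivariant selection $\phi(\omega)\in\tilde K(\omega)$ satisfying $\phi(\omega) = S_1(\omega)\phi(\sigma\omega)$ has a law that is invariant for the Markov kernel on $\simplexk$: if $Y\sim\mathcal{L}(\phi)$ is independent of $S\sim\Sigma$ then $SY \stackrel{d}{=} S_1\phi(\sigma\omega) = \phi(\omega) \sim \mathcal{L}(\phi)$. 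The centroid of $\tilde K(\omega)$ is one such equivariant selection, since affine maps commute with centroids; a second, distinct equivariant selection would produce a second invariant measure on $\simplexk$, completing the contrapositive.

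The main obstacle is producing this second equivariant selection from $\tilde K$ that differs from the centroid whenever $\operatorname{diam}\tilde K>0$. Whereas the centroid transforms naturally under affine maps, generic extremal choices (for example arg-max in a fixed direction of $\zz{R}^k$) fail to be equivariant under the random action of $S_1$. My plan would be to invoke a measurable selection theorem (Kuratowski--Ryll-Nardzewski) to extract a measurable selection of extreme points of $\tilde K$ and then to symmetrize by a Birkhoff-style time average along the shift orbit to restore equivariance. This is essentially the simplex-specific version of a standard random-dynamical-systems principle -- synchronization of a contractive iterated function system on a compact metric space is equivalent to uniqueness of the invariant measure -- and the plan is to assemble that argument in the affine, stochastic-matrix setting relevant to the paintbox representation.
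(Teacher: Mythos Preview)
Your sufficiency argument is correct and is the same synchronous coupling the paper uses.

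The necessity direction has a real gap. You correctly build the random pullback attractor $\tilde K(\omega)=\bigcap_m \tilde Q_m(\omega)\simplexk$, argue that $\diam\tilde K$ is an a.s.\ positive constant when collapse fails, and obtain one invariant law from the centroid. But the step that actually carries the proof---producing a \emph{second} invariant law---is only gestured at, and the proposed mechanism does not close. If $\phi_0$ is a measurable extreme-point selection and $\phi_n(\omega)=S_1\cdots S_n\,\phi_0(\sigma^n\omega)$, then the Ces\`aro means, \emph{if} they converge, are indeed equivariant; but the sequence $(\phi_n(\omega))_n$ is not stationary in $n$ for fixed $\omega$, so Birkhoff's theorem does not apply, and nothing prevents any limit from coinciding with the centroid. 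More fundamentally, you never address why the \emph{law} of a second equivariant selection would differ from the law of the centroid: two distinct equivariant selections can have the same marginal on $\simplexk$. (A smaller issue: ``affine maps commute with centroids'' fails for non-injective affine maps, as a triangle projected to a segment shows; here it can be repaired because $\dim\operatorname{aff}\tilde K$ is shift-invariant and hence a.s.\ constant, forcing $S_1$ to be bijective between the relevant affine hulls---but this needs to be said.)

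The paper sidesteps equivariant selections entirely. It first shows (Lemma~\ref{lemma:u}) that failure of collapse yields a \emph{deterministic} unit vector $u\in V$ with $\xnorm{Q_m u}=1$ a.s.\ for all $m$, and hence deterministic $\mu,\nu\in\simplexk$ with $\xnorm{Q_m(\mu-\nu)}=\xnorm{\mu-\nu}$ for all $m$, chosen with $\xnorm{\mu-\nu}$ maximal. Running the forward chain from the triple $(\mu,\nu,(\mu+\nu)/2)$ and applying Bogoliubov--Krylov on $\simplexk^3$ gives a stationary law whose three marginals $\lambda_Y,\lambda_Z,\lambda_R$ are each invariant for the simplex chain. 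The separation of laws is then forced by geometry: maximality of $\xnorm{\mu-\nu}$ pins the support of $\lambda_Y$ and $\lambda_Z$ to $\partial\simplexk$, and if $\lambda_Y=\lambda_Z$ is supported on the $D$-dimensional faces, the midpoint $R=(Y+Z)/2$ necessarily lands in the interior of a $(D{+}1)$-dimensional face, so $\lambda_R\neq\lambda_Y$. This face-dimension argument is the missing idea in your proposal.
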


\begin{proof}
[Proof of sufficiency]
Let $\pi$ be a stationary distribution, and let $Y_{0}\sim \pi$ and
$\tilde{Y}_{0}$ be random elements of $\simplexk$ that are independent
of the sequence $\{Q_{m} \}_{m\geq 1}$. Define  $Y_{m}=Q_{m}Y_{0}$ and
$\tilde{Y}_{m}=Q_{m}\tilde{Y}_{0}$. Both sequences $\{Y_{m} \}_{m\geq
0}$ and $\{\tilde{Y}_{m} \}_{m\geq 0}$ are versions of the induced
chain, and since the distribution of $Y_{0}$ is stationary, $Y_{m}\sim
\pi$ for every $m\geq 0$. But the asymptotic collapse property
\eqref{eq:collapse} implies that as $m \rightarrow \infty$,
\[
	d (Y_{m},\tilde{Y}_{m}) \rightarrow 0,
\]
so the distribution of $\tilde{Y}_{m}$ approaches $\pi$ weakly as $m
\rightarrow \infty$. 
\end{proof}

The converse is somewhat more subtle. Recall that the linear subspace
$V=V_{k}$ orthogonal to the vector $\mathbf{1}$ is invariant under
multiplication by any stochastic matrix. Define $U\subset V$ to be the
set of unit vectors $u$ in $V$ such that $\xnorm{Q_{m}u}=\xnorm{u}$
almost surely for every $m\geq 1$. Clearly, the set $U$ is a closed
subset of the unit sphere in $V$, and it is also invariant,
that is, $Q_{m} (U)\subset U$ almost surely.

\begin{lemma}\label{lemma:u}
The set $U$ is empty if and only if the asymptotic collapse property
\eqref{eq:collapse} holds. 
\end{lemma}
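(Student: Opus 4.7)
The plan is to prove both implications of the equivalence. The direction ``collapse $\Rightarrow U=\emptyset$'' is essentially a one-line observation: if $u\in U$, then because $u\in V$ its positive and negative parts have equal $\ell^{1}$-norm, so one can write $u=c(p-q)$ for some $c>0$ and $p,q\in\simplexk$. Since $Q_{m}p,Q_{m}q\in\simplexk$, and $\|Q_{m}u\|=c\|Q_{m}p-Q_{m}q\|=1$ almost surely by the defining property of $U$, one obtains $\diam(Q_{m}(\simplexk))\geq 1/c$ almost surely, contradicting \eqref{eq:collapse}.

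For the reverse direction, I would assume $U=\emptyset$ and aim for \eqref{eq:collapse}. The first step is to convert the pointwise hypothesis into a uniform contraction estimate on the unit sphere $\mathbb{S}(V)\subset V$. For each $u\in\mathbb{S}(V)$, the condition $u\notin U$ says that $\mathbf{P}(\|Q_{m_{u}}u\|<1)>0$ for some $m_{u}\geq 1$; since $\|Q_{m}u\|\leq\|u\|=1$ always, this yields $\mathbf{E}\|Q_{m_{u}}u\|<1$. Two structural facts then apply: (i) for each fixed $m$, the map $u\mapsto\mathbf{E}\|Q_{m}u\|$ is $1$-Lipschitz on $V$, so the strict inequality persists on an open neighborhood of $u$; and (ii) $\|Q_{m+1}u\|\leq\|Q_{m}u\|$ pathwise, hence $\mathbf{E}\|Q_{m}u\|$ is non-increasing in $m$. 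A standard compactness argument on $\mathbb{S}(V)$ using (i) and (ii) then produces a single $M\geq 1$ and a constant $\delta>0$ such that
\[
\sup_{u\in\mathbb{S}(V)}\mathbf{E}\|Q_{M}u\|\ \leq\ 1-\delta.
\]

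From this uniform bound the rest follows by iteration. Writing $Q_{(k+1)M}=R\,Q_{kM}$ with $R=S_{(k+1)M}\cdots S_{kM+1}$ independent of $Q_{kM}$ and distributed as $Q_{M}$, conditioning on $Q_{kM}$, and applying the uniform bound to the unit vector $Q_{kM}u/\|Q_{kM}u\|$, one obtains
\[
\mathbf{E}\bigl[\|Q_{(k+1)M}u\|\,\big|\,Q_{kM}\bigr]\ \leq\ (1-\delta)\,\|Q_{kM}u\|,
\]
so that $\mathbf{E}\|Q_{kM}u\|\leq(1-\delta)^{k}$ for every unit $u\in V$. Because $\|Q_{m}u\|$ is non-negative and pathwise non-increasing in $m$, it converges almost surely to a limit whose expectation is zero (along the subsequence $kM$), hence $\|Q_{m}u\|\to 0$ a.s.\ for each fixed $u\in V$. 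Finally, the identity $p-q=\sum_{i,j}p_{i}q_{j}(e_{i}-e_{j})$ for $p,q\in\simplexk$ shows $\diam(Q_{m}(\simplexk))=\max_{i\neq j}\|Q_{m}(e_{i}-e_{j})\|$, a maximum over finitely many fixed vectors of $V$; a union bound yields $\diam(Q_{m}(\simplexk))\to 0$ almost surely.

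The main obstacle is the compactness step: extracting a uniform contraction factor $1-\delta$ on the whole sphere from a hypothesis that only asserts, for each individual $u$ separately, that some power $Q_{m_{u}}$ is strictly contracting in expectation at $u$. The trick is to combine the Lipschitz estimate, which spreads each strict inequality over an open neighborhood of $u$, with the monotonicity in $m$, which fuses the neighborhood-dependent times $\{m_{u_{i}}\}$ into a single time $M$. Once this uniform bound is in hand, the geometric decay in expectation, the passage to almost sure convergence via pathwise monotonicity, and the reduction of the diameter to a finite maximum over extreme differences $e_{i}-e_{j}$ are all routine.
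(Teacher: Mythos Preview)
Your proof is correct. Both directions are sound: the $p$--$q$ decomposition for the forward direction is clean, and for the converse the compactness-plus-monotonicity step giving a single $M$ with $\sup_{u}\mathbf{E}\|Q_{M}u\|\leq 1-\delta$ is justified exactly as you describe (the continuous function $u\mapsto \mathbf{E}\|Q_{M}u\|$ attains its maximum on the sphere, and that maximum is $<1$), and the supermartingale-style iteration and the reduction to the finite family $\{e_{i}-e_{j}\}$ are routine.

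Your route differs from the paper's in the harder direction. The paper argues the contrapositive: if \eqref{eq:collapse} fails, it first invokes Proposition~\ref{proposition:simplex-collapse} to conclude that $\lambda_{m,1}=1$ almost surely for every $m$, produces a \emph{random} unit vector $u$ with $\|Q_{m}u\|=1$ for all $m$ by a compactness argument, and then upgrades this to a deterministic $u\in U$ by a second compactness argument involving the functions $p_{m}(u)=\mathbf{P}(\|Q_{m}u\|<1)$. You instead prove the direct implication $U=\emptyset\Rightarrow$ collapse, replacing the probabilities $p_{m}(u)$ by the expectations $\mathbf{E}\|Q_{m}u\|$ and exploiting their sub-multiplicativity under independent blocks. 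The underlying mechanism---compactness of the sphere plus monotonicity in $m$ to obtain a uniform bound at a single time---is the same in both proofs, but your expectation formulation makes the passage to almost-sure collapse immediate via the pathwise monotonicity of $\|Q_{m}u\|$, and it is self-contained (it does not appeal to Proposition~\ref{proposition:simplex-collapse}). The paper's version, on the other hand, fits the narrative of the section better since it reuses machinery already established there.
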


\begin{proof}
If \eqref{eq:collapse} holds then $\lim_{m \rightarrow \infty
}\lambda_{m,1}=0$, and so $\xnorm{Q_{m}u} \rightarrow 0$ a.s.\ for every unit vector
$u\in V$. Thus, $U=\emptyset$. 

To prove the converse statement, assume that the asymptotic collapse
property \eqref{eq:collapse} \emph{fails.} Then by
Proposition~\ref{proposition:simplex-collapse}, for each $m\geq 1$ the
largest singular value of $Q_{m}|V$ is $\lambda_{m,1}=1$, and
consequently there exist (possibly random) unit vectors $v_{m}\in V $
such that $\xnorm{Q_{m}v_{m}}=1$. Since each matrix $S_{i}$ is
contractive, it follows that $\xnorm{Q_{m}v_{m+n}}=1$ for all $m,n\geq
1$. Hence, by the compactness of the unit sphere and the continuity of
the maps $Q_{m}|V$, there exists a possibly random unit vector $u$
such that $\xnorm{Q_{m}u}=1$ for every $m\geq 1$.

We will now show that there exists a \emph{non-random} unit vector $u$
such that $\xnorm{Q_{m}u}=1$ for every $m$, almost surely. Suppose to
the contrary that there were no such $u$.  For each unit vector $u$,
let $p_{m} (u)$ be the probability that $\xnorm{Q_{m}u}<1$. Since the
matrices $S_{m}$ are weakly contractive, for any unit vector $u$ the
events $\xnorm{Q_{m}u}=1$ are decreasing in $m$, and so $p_{m} (u)$ is
non-decreasing. Hence, by a subsequence argument, if for every $m\geq
1$ there were a unit vector $u_{m}$ such that $p_{m}(u_{m})=0$, then
there would be a unit vector $u$ such that $p_{m} (u)=0$ for every
$m$. But by assumption there is no such $u$; consequently, there must
be some finite $m\geq 1$ such that $p_{m} (u)>0$ for every unit
vector.

 For each fixed $m$, the function $p_{m} (u)$ is lower semi-continuous
(by the continuity of matrix multiplication), and therefore attains a
minimum on the unit sphere of $V$. Since $p_{m}$ is strictly positive,
it follows that there exists $\delta >0$ such that $p_{m} (u)\geq
\delta$ for every unit vector $u$.  But if this is the case then there
can be no random unit vector $u$ such that $\xnorm{Q_{m}u}=1$ for
every $m\geq 1$, because for each $m$ the event that
$\xnorm{Q_{m+1}u}<\xnorm{Q_{m} u}$ would have conditional probability
(given $S_{1},S_{2},\dotsc ,S_{m}$) at least $\delta$.
\end{proof}

\begin{proof}
[Proof of necessity in
Proposition~\ref{proposition:ergodicity-inducedMC}]
If the asymptotic collapse property
\eqref{eq:collapse} {fails}, then by Lemma~\ref{lemma:u} there exists
a unit vector $u\in V$ such that $\xnorm{Q_{m}u}=1$ for all $m\geq 1$,
almost surely.  Hence, since $\simplexk$ is contained in a translate
of $V$, there exist distinct $\mu ,\nu\in \simplexk$ such that 
$\xnorm{Q_{m} (\mu -\nu)}=\xnorm{\mu -\nu}$ for all $m\geq 1$, a.s. 
By compactness, there exists such a pair $(\mu ,\nu)\in \Delta^{2}_k$
for which $\xnorm{\mu -\nu}$ is maximal. 
Fix such a pair $(\mu ,\nu)$, and let $A\subset \Delta^2_k$ be the
set of all pairs $(y,z)$ such that  
\[
	\xnorm{S_{1}y -S_{1}z}=\xnorm{\mu -\nu} \quad \text{a.s.}
\]
Note that the set $A$ is closed, and consequently
compact. Furthermore, because $\mu ,\nu$ have been chosen so that
$\xnorm{\mu -\nu}$ is maximal, for any pair $(y,z)\in A$ the points $y$ and
$z$ must both lie in the  boundary $\partial \simplexk$ of the simplex.

Define $Y_{m}=Q_{m}\mu $, $Z_{m}=Q_{m}\nu$, and $R_{m}=
(Y_{m}+Z_{m})/2$.  By construction, for each $m\geq 0$ the pair $(
Y_{m},Z_{m})$ lies in the set $A$.  The sequence $(Y_{m},Z_{m},R_{m})$
is a $\Delta^{3}_k-$valued Markov chain, each of whose projections on
$\simplexk$ is a version of the induced chain. Since $\Delta^{3}_k$
is compact, the Bogoliubov-Krylov argument implies that the Markov
chain $(Y_{m},Z_{m},R_{m})$ has a stationary distribution $\lambda$ whose
projection $\lambda_{Y,Z}$ on the first two coordinates is supported
by $A$. Each of the marginal distributions $\lambda_{Y}$,
$\lambda_{Z}$, and $\lambda_{R}$ is obviously stationary for the
induced chain on the simplex, and both $\lambda_{Y}$ and $\lambda_{Z}$
have supports contained in $\partial \simplexk $. Clearly, if
$(Y,Z,R)\sim \lambda $ then  $R= (Y+Z)/2$.

We may assume that $\lambda_{Y}=\lambda_{Z}$, for otherwise there is
nothing to prove.  We claim that $\lambda_{R}\not =\lambda_{Y}$. To
see this, let $D$ be the minimal integer such that $\lambda_{Y}$ is
supported by the union $\partial_{D}\simplexk$ of the $D-$dimensional
faces of $\simplexk$.  If $(Y,Z,R)\sim \lambda$, then $Y\not
=Z$, since $\lambda_{Y,Z}$ has support in $A$. Consequently, $(Y+Z)/2$
is contained in the interior of a $(D+1)-$dimensional face of
$\simplexk$. It follows that $\lambda_{R}\not =\lambda_{Y}$.

\end{proof}

\begin{remark}\label{remark:recurrence} \emph{Recurrence Times.}
Assume that the asymptotic collapse property \eqref{eq:collapse}
holds, and let $\nu$ be the unique stationary distribution for the
induced chain on the  simplex. Say that a point $v$ of the simplex is
a \emph{support point} of $\nu$ if $\nu$ gives positive probability to
every open neighborhood of $v$. Fix such a neighborhood $U$, and let
$\tau$ be the first time $m\geq 1$ that $Y_{m}\in U$. Then there
exists $0<r=r_{U}<1$ such that for all $m\geq  1$,
\[
	P\{\tau >m \}\leq r^{m},
\]
regardless of the initial state $Y_{0}$ of the induced chain. To see
this, observe that because $\nu (U)>0$ there exists $m$ such that the
event  $Q_{m} (\simplexk)\subset U$ has positive
probability. Consequently, because the matrices $S_{i}$ are i.i.d.,
the probability that $Q_{mn} (\simplexk)\not \subset U$ for all
$n=1,2,\dotsc ,N$ is exponentially decaying in $N$.
\end{remark}

\begin{remark}\label{remark:asymptoticFreqs}
\emph{Relation between the induced chain on $\simplexk$ and the EFCP.}
Let $\{X_{m} \}_{m\geq 0}$ be a version of the EFCP on $[k]^{\zz{N}}$
with paintbox sequence $\{S_{m} \}_{m\geq 1}$. By
Proposition~\ref{prop:cond indep}, the individual coordinate sequences
$\{X^{i}_{m} \}_{m\geq 0}$ are conditionally independent given
$\mathcal{G}=\sigma (X_{0},S_{1},S_{2},\dotsc)$, and for each $i$ the
sequence $\{X^{i}_{m} \}_{m\geq 0}$ evolves as a time-inhomogeneous
Markov chain with one-step transition probability matrices $S_{m}$.
Consequently, by the strong law of large numbers, if the initial state
$X_{0}$ has the property that the limiting frequencies of all colors
$r\in [k]$ exist with probability one (as would be the
case if the initial distribution is exchangeable), then this property
persists for all times $m\geq 1$. In this case, the sequence $\{Y_{m}
\}_{m\geq 0}$, where  $Y_{m}$ is the vector  of
limiting color frequencies in the $m$th generation, is a version of
the induced Markov chain on the simplex $\simplexk$.  Moreover, the
$j$th column of the 
stochastic matrix $S_{m}$ coincides with the limit frequencies of
colors in $X_{m}$ among those indices $i\in \zz{N}$ such that
$X^{i}_{m-1}=j$. Thus, the paintbox sequence can be recovered (as a
measurable function) from the EFCP.
\end{remark}

\subsection{Asymptotic Decay Rates}\label{ssec:asymptotic-decay}

 Lebesgue measure on $\simplexk$ is obtained by
translating Lebesgue measure on $V$ (the choice of Lebesgue measure
depends on the choice of basis for $V$, but for any two choices the
corresponding Lebesgue measures differ only by a scalar multiple).
The $k-$fold product of Lebesgue measure on $\simplexk$ will be
referred to as \emph{Lebesgue measure} on $\ksimplexk$.


\begin{hypothesis}\label{hypothesis:L2Density}
The distribution $\Sigma$ of the random stochastic matrix $S_1$ is absolutely
continuous with respect to Lebesgue measure on $\stochk$ and has a
density of class $L^{p}$ for some $p>1$.
\end{hypothesis}

Hypothesis \ref{hypothesis:L2Density} implies that the conditional
distribution of the $i$th column of $S_{1}$, given the other $k-1$
columns, is absolutely continuous relative to Lebesgue measure on
$\simplexk$. Consequently, the conditional probability that it is a
linear combination of the other $k-1$ columns is $0$. Therefore, the matrices
$S_{t}$ are almost surely nonsingular, and so the Furstenberg theory
(\cite{BougerolLacroix}, chapters 3--4) applies.  Furthermore, under
Hypothesis~\ref{hypothesis:L2Density} the entries of $S_{1}$ are
positive, with probability $1$. Thus,
Hypothesis~\ref{hypothesis:L2Density} implies
Hypothesis~\ref{hypothesis:positive entries}.

\begin{proposition}\label{proposition:m2density}
Under Hypothesis~\ref{hypothesis:L2Density}, 
\begin{equation}\label{eq:finite-log-expectation}
	E|\log |\det S_{1}||<\infty,
\end{equation}
and consequently
\begin{equation}\label{eq:exp-decay-determinant}
	\lim_{n \rightarrow \infty } (\det (Q_{n}|V))^{1/n}=e^{\kappa}
	\quad \text{where} \quad 
	\kappa = E\log \det S_{1}.
\end{equation}
\end{proposition}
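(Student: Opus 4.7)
The plan is to establish the two halves of Proposition~\ref{proposition:m2density} in sequence: first \eqref{eq:finite-log-expectation}, then \eqref{eq:exp-decay-determinant}.

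For integrability, I first observe that for any $S\in\stochk$ Hadamard's inequality gives $|\det S|\leq \prod_j\|S^j\|_2\leq \prod_j\|S^j\|_1=1$, so $\log|\det S|\leq 0$ and it suffices to bound $E(-\log|\det S_1|)$ from above. Writing $f$ for the density of $\Sigma$ (in $L^p$, by Hypothesis~\ref{hypothesis:L2Density}) and taking $q$ conjugate to $p$, H\"older's inequality gives
\[
E(-\log|\det S_1|)=\int_{\stochk} f(S)(-\log|\det S|)\,dS \;\leq\; \|f\|_p \,\bigl\|\log|\det(\cdot)|\bigr\|_{L^q(\stochk)},
\]
and by the layer-cake formula
\[
\int_{\stochk}|\log|\det S||^q\,dS \;=\; q\int_0^\infty u^{q-1}\,\mathrm{vol}\{S:|\det S|<e^{-u}\}\,du.
\]
So what is needed is polynomial decay of $\mathrm{vol}\{|\det S|<t\}$ as $t\downarrow0$. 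Since $\det$ is a polynomial of total degree $k$ in the entries of $S$ and is nontrivial on $\stochk$ (it equals $1$ at the identity), I parameterize $\stochk$ by its $k(k-1)$ free entries and slice: in a generic one-dimensional slice, $\det$ becomes a nonzero polynomial of degree at most $k$ in the slicing variable. Such a polynomial has at most $k$ real roots, so its sublevel set has length $\leq C_k t^{1/k}$; Fubini then yields $\mathrm{vol}\{|\det S|<t\}\leq C'_k t^{1/k}$, making the tail integral finite.

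For the convergence \eqref{eq:exp-decay-determinant}, the key reduction is the identity $\det S=\det(S|V)$ for every column-stochastic $S$. The subspace $V=\mathbf{1}^{\perp}$ is $S$-invariant, and the linear functional $\varphi(v)=\mathbf{1}^T v$ is $S$-invariant (since $\mathbf{1}^T S=\mathbf{1}^T$), so $S$ descends to the identity on the quotient $\mathbb{R}^k/V$. In any basis of $\mathbb{R}^k$ extending a basis of $V$, the matrix of $S$ is therefore block upper-triangular with diagonal blocks $S|V$ and $1$, giving $\det S=\det(S|V)\cdot 1$. Multiplicativity of the determinant yields $\det(Q_n|V)=\det(Q_n)=\prod_{i=1}^n\det S_i$, and therefore
\[
\frac{1}{n}\log|\det(Q_n|V)|=\frac{1}{n}\sum_{i=1}^n\log|\det S_i|\;\longrightarrow\;E\log|\det S_1|=\kappa\quad\text{a.s.}
\]
by Kolmogorov's strong law of large numbers, whose $L^1$ hypothesis was supplied by the first step. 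Exponentiating yields \eqref{eq:exp-decay-determinant}, with the understanding that the $n$th root is read in modulus, since $\det S_i$ may take either sign on $\stochk$.

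The main obstacle is the sublevel-set bound for $|\det S|$ in the first step: one must verify that in a suitable parameterization of $\stochk$, $\det$ really does restrict to a nontrivial polynomial of bounded degree in some slicing variable, rather than degenerating. A natural choice is to fix a column index $j$ and slice along a single free entry of the $j$th column, observing that cofactor expansion makes the resulting univariate polynomial nontrivial on generic slices. The remainder of the argument---H\"older, layer-cake, the block-triangular determinant identity, and the SLLN---is routine.
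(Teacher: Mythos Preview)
Your argument is correct in outline and reaches the same conclusion as the paper, but the route to the key volume estimate $\mathrm{vol}\{S\in\stochk:|\det S|<t\}\leq C\, t^{1/k}$ is genuinely different. The paper obtains this bound geometrically: if $|\det S|<\varepsilon$ then the smallest singular value is below $\varepsilon^{1/k}$, which forces some column $Se_i$ to lie within distance roughly $\varepsilon^{1/k}$ of the affine span of the remaining columns; the set $B_m$ of such matrices (with $e^{-m}\approx\varepsilon^{1/k}$) is a tube of thickness $e^{-m}$ around a lower-dimensional set and hence has Lebesgue measure $\leq C e^{-m}$. H\"older is then applied exactly as you do, summing $(m+1)\,|B_m|^{1/q}$ over $m$.

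Your polynomial-slicing approach can be made rigorous, but---as you yourself flag---the one-dimensional sublevel bound $|\{x:|p(x)|<t\}|\leq C_k t^{1/k}$ requires control of the leading coefficient, which varies over slices and vanishes on a positive-codimension set. Integrating the resulting $|a(y)|^{-1/k}$ over the remaining variables is itself a sublevel problem, and one must either run an induction on dimension or invoke a Carbery--Wright/Remez-type inequality to close the loop. The paper's geometric argument sidesteps this technicality entirely and is more self-contained, at the cost of being specific to the determinant rather than a general polynomial.

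For \eqref{eq:exp-decay-determinant}, both you and the paper invoke multiplicativity of the determinant together with the strong law of large numbers. Your explicit verification that $\det S=\det(S|V)$ via the block-triangular form relative to $V$ and a complement on which $S$ acts as the identity is a detail the paper records only as a side remark (Note~\ref{note:exponents}); supplying it here is a welcome addition.
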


\begin{note}\label{note:exponents}
The determinant of $S_1$ is the volume of the polyhedron
$S_1[0,1]^{k}$,  which is $\sqrt{k}$ times the volume of the
$(k-1)$-dimensional polyhedron with vertices $S_1e_{i}$, where
$1\leq i\leq k$. The volume of this
$(k-1)-$dimensional polyhedron is the determinant of the restriction
$S_1|V$. Consequently,
\[
	\det S_1|V=\prod_{i=1}^{k-1}\lambda_{1,i}.
\]
\end{note}

\begin{proof}
The assertion \eqref{eq:exp-decay-determinant} follows from
\eqref{eq:finite-log-expectation}, by the strong law of large numbers,
since the determinant is multiplicative. It remains to prove
\eqref{eq:finite-log-expectation}.  Fix $\varepsilon >0$, and consider
the event $\det S_1<\varepsilon$. This event can occur only if the
smallest singular value of $S_1$ is less than $\varepsilon^{1/k}$, and
this can happen only if one of the vectors $S_1e_{i}$ lies within
distance $\varepsilon^{1/k}$ (or so) of a convex linear combination of
the remaining $S_1e_{j}$.

 The vectors $S_1e_{i}$, where $i\in [k]$, are the columns of $S_1$, whose
distribution is assumed to have a $L^p$ density $f (M)$
with respect to Lebesgue measure $dM$ on $\stochk$. Fix an integer
$m\geq 1$, and consider the subset $B_{m}$ of $\stochk$ consisting
of all $k\times k$ stochastic matrices $M$ such that the $i$th column
$Me_{i}$ lies within distance $e^{-m}$ of the set of all convex
combinations of the remaining columns $Me_{j}$. Elementary geometry
shows that the set $B_{m}$ has Lebesgue measure $\leq Ce^{-m}$, for
some constant $C=C_{k}$ depending on the dimension but not on $m$ or
$i$. Consequently, by the H\"{o}lder inequality, for a suitable
constant $C'=C'_{k}<\infty$,
\begin{align*}
		E|\log |\det S_1|| &\leq C'\sum_{m=0}^{\infty} (m+1)
		\int_{B_{m}} f (M) \,dM \\
		&\leq  C'\sum_{m=0}^{\infty} (m+1) 
		\left\{\int_{B_{m}} 1\,dM \right\}^{1/q}
		\left\{\int f (M)^{p}\,dM \right\}^{1/p}\\
		&\leq  C'\sum_{m=0}^{\infty} (m+1) e^{-m/q}
		\left\{\int f (M)^{p}\,dM \right\}^{1/p} <\infty 
\end{align*}
where $1/p+1/q=1$.  In fact, this also shows that $\log|\det S_1|$ has
finite moments of all orders, and even a finite moment generating
function in a neighborhood of $0$.
\end{proof}

\begin{proposition}\label{proposition:logNorm}
Under hypotheses \ref{hypothesis:L2Density},
\begin{equation}\label{eq:2ndLyapunovExponent}
	\lim_{n \rightarrow \infty} \lambda_{n,1}^{1/n} :=\lambda_{1}\quad\mbox{exists a.s.}
\end{equation}
Moreover, the limit $\lambda_{1}$  is constant and
satisfies $0<\lambda_{1}<1$. 
\end{proposition}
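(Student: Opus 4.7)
The plan is to set $a_n := \log\lambda_{n,1}$, recognize this as a subadditive process driven by the i.i.d.\ sequence $\{S_i\}$, apply Kingman's subadditive ergodic theorem to obtain an almost-sure constant limit $\log\lambda_1$, and then establish $0<\lambda_1<1$ by combining the asymptotic simplex collapse (Proposition~\ref{proposition:simplex-collapse}) with the determinant estimate (Proposition~\ref{proposition:m2density}).

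Because every column-stochastic matrix preserves $V$, $Q_{n+m}|V = \bigl((S_{n+m}\cdots S_{n+1})|V\bigr)(Q_n|V)$, and submultiplicativity of the operator norm yields
\[
	a_{n+m}\;\leq\;a_n + a'_m,
\]
where $a'_m := \log\|(S_{n+m}\cdots S_{n+1})|V\|_{\mathrm{op}}$ is independent of $\sigma(S_1,\dotsc,S_n)$ and equidistributed with $a_m$. For the integrability hypothesis I observe that $a_n\leq 0$ (since every $\lambda_{n,i}\leq 1$), while the ordering $\lambda_{n,1}\geq\lambda_{n,i}$ together with Note~\ref{note:exponents} give $(k-1)a_n\geq\log|\det(Q_n|V)|$; hence $E|a_1|<\infty$ by Proposition~\ref{proposition:m2density}. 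Kingman's subadditive ergodic theorem, applied to this i.i.d.-driven system (whose underlying shift is ergodic), then gives $a_n/n\to\log\lambda_1:=\inf_{n\geq 1}E[a_n]/n$ almost surely and in $L^1$, with $\log\lambda_1$ a deterministic constant. This proves~\eqref{eq:2ndLyapunovExponent}.

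For $\lambda_1<1$, I use a simple geometric fact: since $\simplexk$ is a full-dimensional convex body sitting in a translate of $V$, the symmetric convex set $\{v-w : v,w\in\simplexk\}\subset V$ contains a Euclidean ball of some radius $r_0>0$ about the origin. If $u\in V$ is a unit vector with $\|Q_n u\|=\lambda_{n,1}$, then $r_0 u=v-w$ for some $v,w\in\simplexk$, so $\diam(Q_n(\simplexk))\geq\|Q_n v-Q_n w\|=r_0\lambda_{n,1}$. Hypothesis~\ref{hypothesis:L2Density} implies Hypothesis~\ref{hypothesis:positive entries}, so Corollary~\ref{corollary:hyp-positivity} and the exponential contraction~\eqref{eq:exp-simplex-contraction} of Proposition~\ref{proposition:simplex-collapse} give $\limsup_n\diam(Q_n(\simplexk))^{1/n}<1$ almost surely, whence $\lambda_1<1$. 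For $\lambda_1>0$, the same Note~\ref{note:exponents} together with $\lambda_{n,1}\geq\lambda_{n,i}$ yield $\lambda_{n,1}^{k-1}\geq|\det(Q_n|V)|$, so Proposition~\ref{proposition:m2density} gives
\[
	\lambda_1\;\geq\;\lim_{n\to\infty}|\det(Q_n|V)|^{1/((k-1)n)}\;=\;e^{\kappa/(k-1)}\;>\;0,
\]
since $\kappa$ is finite by~\eqref{eq:finite-log-expectation}.

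The only point requiring care is the Kingman integrability hypothesis, which is not immediate from $a_n\leq 0$ alone; it is handled by the determinant-squeeze $0\geq a_n\geq\tfrac{1}{k-1}\log|\det(Q_n|V)|$ combined with the $L^p$-density assumption. Everything else reduces to the Minkowski-type geometry of $\simplexk$ (for the strict upper bound) and a direct appeal to Note~\ref{note:exponents} and Proposition~\ref{proposition:m2density} (for the strict lower bound).
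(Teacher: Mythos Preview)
Your proof is correct and follows essentially the same route as the paper: Kingman/Furstenberg--Kesten for the existence and constancy of the limit, the exponential simplex collapse (via Hypothesis~\ref{hypothesis:positive entries}) for $\lambda_1<1$, and the determinant bound of Proposition~\ref{proposition:m2density} for $\lambda_1>0$. The only cosmetic differences are that you make the Kingman integrability check explicit via the determinant squeeze (the paper leaves this implicit), and you link $\lambda_{n,1}$ to $\diam(Q_n(\simplexk))$ directly through the difference body of the simplex rather than citing a contractivity lemma; both are fine.
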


\begin{remark}\label{rmk:sv}
It can be shown that the Lyapunov exponents of the sequence $Q_{m}$
are the same as those of $Q_{m}|V$, but with one additional Lyapunov
exponent $0$. Thus, $\log \lambda_{1}$ is the \emph{second} Lyapunov
exponent of the sequence $Q_{m}$. 
\end{remark}

\begin{remark}\label{remark:furstenberg}
Hypothesis~\ref{hypothesis:L2Density} implies that the distribution of
$S_{1}$ is \emph{strongly irreducible} (cf. \cite{BougerolLacroix},
ch. 3), and so a theorem of Furstenberg implies that the top two
Lyapunov exponents of the sequence $Q_{m}$ are distinct. However,
additional hypotheses are needed to guarantee that
$\lambda_{1}>0$. This is the main point of
Propositions~\ref{proposition:m2density}--\ref{proposition:logNorm}.
\end{remark}

\begin{proof}
[Proof of Proposition \ref{proposition:logNorm}] The almost sure
convergence follows from the Furstenberg-Kesten theorem
\cite{FurstenbergKesten1960} (or alternatively, Kingman's subadditive
ergodic theorem \cite{kingman}), because the largest singular value of
$Q_{n}|V$ is the matrix norm of $Q_{n}|V$, and the matrix norm is
sub-multiplicative.  That the limit $\lambda_{1}$ is constant follows
from the Kolmogorov $0-1$ law, because if the matrices $S_{j}$ are
nonsingular (as they are under the hypotheses on the distribution of
$S_1$) the value of $\lambda_{1}$ will not depend on any initial
segment $S_{m}S_{m-1}\dotsb S_1$ of the matrix products.

 That $\lambda_{1}<1$ follows from assertion
\eqref{eq:exp-simplex-contraction} of
Proposition~\ref{proposition:simplex-collapse}, because
Hypothesis~\ref{hypothesis:positive entries} implies that there is a
positive probability $\eta>0$ that all entries of $S_1$ are at least
$\varepsilon>0$, in which case $S_1$ is strictly contractive on
$\simplexk$ -- and hence also on $V$ -- with contraction factor
$\theta =\theta(\varepsilon)<1$ (\cite{Golubitsky1975}, proposition
1.3).

Finally, the assertion that $\lambda_{1}>0$  follows from
Proposition \ref{proposition:m2density},  because for any stochastic
matrix each  singular value is bounded below by the determinant.
\end{proof}

\begin{corollary}\label{corollary:contraction}
Under hypotheses \ref{hypothesis:L2Density},
\[
 \lim_{n \rightarrow
	\infty} \max_{i\not =j} 
	\xnorm{Q_{n}e_{i}-Q_{n}e_{j}}^{1/n} =
	\lambda_{1} \quad \text{almost surely.}
\]
\end{corollary}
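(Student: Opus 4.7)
The plan is to sandwich $\max_{i\neq j}\xnorm{Q_{n}e_{i}-Q_{n}e_{j}}$ between two constant multiples of the top singular value $\lambda_{n,1}$ of $Q_{n}|V$, and then invoke Proposition~\ref{proposition:logNorm} to extract the almost-sure limit $\lambda_{1}$. The crucial observation is that every difference $e_{i}-e_{j}$ is orthogonal to $\mathbf{1}$, so it lies in $V$, and moreover the collection $\{e_{i}-e_{i+1}\}_{i=1}^{k-1}$ forms a basis of $V$. Thus the family of differences $Q_{n}e_{i}-Q_{n}e_{j}=Q_{n}(e_{i}-e_{j})$ is already rich enough to witness the top singular value of $Q_{n}|V$ up to a dimensional constant.

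For the upper bound, I would simply note that $\xnorm{e_{i}-e_{j}}=\sqrt{2}$ and that $\lambda_{n,1}$ is by definition the operator norm of $Q_{n}|V$, so
\[
\max_{i\neq j}\xnorm{Q_{n}(e_{i}-e_{j})}\leq \sqrt{2}\,\lambda_{n,1}.
\]
For the lower bound, let $u_{n}\in V$ be a top right singular vector of $Q_{n}|V$, with $\xnorm{u_{n}}=1$ and $\xnorm{Q_{n}u_{n}}=\lambda_{n,1}$. Expanding $u_{n}=\sum_{i=1}^{k-1}c_{i,n}(e_{i}-e_{i+1})$ in the fixed basis of $V$, the coefficients $c_{i,n}$ are controlled by a constant $C_{k}$ depending only on $k$ (the operator norm of the change-of-basis map from the orthonormal basis to $\{e_{i}-e_{i+1}\}$). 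The triangle inequality then gives
\[
\lambda_{n,1}=\xnorm{Q_{n}u_{n}}\leq (k-1)C_{k}\max_{i}\xnorm{Q_{n}(e_{i}-e_{i+1})}\leq (k-1)C_{k}\max_{i\neq j}\xnorm{Q_{n}(e_{i}-e_{j})}.
\]

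Combining the two bounds yields
\[
\frac{\lambda_{n,1}}{(k-1)C_{k}}\leq \max_{i\neq j}\xnorm{Q_{n}e_{i}-Q_{n}e_{j}}\leq \sqrt{2}\,\lambda_{n,1}.
\]
Taking $n$th roots, the constants $(k-1)C_{k}$ and $\sqrt{2}$ disappear in the limit, and Proposition~\ref{proposition:logNorm} gives $\lambda_{n,1}^{1/n}\to \lambda_{1}$ almost surely, hence $\max_{i\neq j}\xnorm{Q_{n}e_{i}-Q_{n}e_{j}}^{1/n}\to \lambda_{1}$ a.s. There is no real obstacle here — the whole argument is a two-line linear-algebraic comparison — the only mild subtlety is identifying the correct subspace $V$ so that the differences land there and the operator norm bound applies with $\lambda_{n,1}$ rather than the trivially larger norm of $Q_{n}$ on $\zz{R}^{k}$ (which is $1$).
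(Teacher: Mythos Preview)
Your proof is correct and follows essentially the same approach as the paper's: both arguments use that the differences $e_{i}-e_{j}$ lie in $V$, expand the top singular vector $u_{n}$ in the basis $\{e_{i}-e_{i+1}\}$ with uniformly bounded coefficients, apply the triangle inequality, and then invoke Proposition~\ref{proposition:logNorm}. The only cosmetic difference is that the paper phrases the lower bound as a contradiction along a hypothetical bad subsequence, whereas you give the cleaner direct two-sided sandwich $\lambda_{n,1}/((k-1)C_{k})\leq \max_{i\neq j}\xnorm{Q_{n}(e_{i}-e_{j})}\leq \sqrt{2}\,\lambda_{n,1}$ and take $n$th roots.
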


\begin{proof}
The lim sup of the maximum cannot be greater than
$\lambda_{1}$, because for each $n$ 
the singular value $\lambda_{n,1}$ of $Q_{n}|V$ is just the matrix
norm $\xnorm{Q_{n}}$. To prove the reverse inequality, assume the
contrary. Then there is a subsequence $n=n_{m} \rightarrow \infty$
along which
\[
		\limsup_{m \rightarrow \infty} \max_{i\not =j}
	\xnorm{Q_{n}e_{i}-Q_{n}e_{j}}^{1/n} <\lambda_{1}-\varepsilon 
\]
for some $\varepsilon >0$. Denote by $u=u_{n}\in V$ the unit vector
that maximizes  $\xnorm{Q_{n}u}$. Because the vectors $e_{i}-e_{i+1}$
form a basis of $V$, for each $n$ the vector $u_{n}$ is a linear
combination $u_{n}=\sum_{i}a_{ni} (e_{i}-e_{i+1})$, and because each
$u_{n}$ is a unit vector, the coefficients $a_{ni}$ are uniformly
bounded by (say) $C$ in magnitude. Consequently,
\[
	\xnorm{Q_{n}u_{n}}\leq C \sum_{i}\xnorm{Q_{n}(e_{i}-e_{i+1})}.
\]
This implies that along the subsequence $n=n_{m}$ we have
\[
		\limsup_{m \rightarrow \infty}
		\xnorm{Q_{n}u_{n}}^{1/n} <\lambda_{1}-\varepsilon .
\]
But this contradicts the fact that $\xnorm{Q_{n}|V}^{1/n}\rightarrow
\lambda_{1}$ from proposition \ref{proposition:logNorm}.

\end{proof}

\begin{remark}\label{remark:min}
It can be also be shown  that 
\[
		\lim_{n \rightarrow \infty} \min_{i\not =j}
	\xnorm{Q_{n}e_{i}-Q_{n}e_{j}}^{1/n} =\lambda_{1}.
\]
This, however, will not be needed for the results of
section~\ref{section:convergence}.
\end{remark}

\begin{remark}\label{note:contraction}
The argument used to prove that $\lambda_{1}<1$ in the proof of
Proposition~\ref{proposition:logNorm} also proves that even if
Hypothesis~\ref{hypothesis:L2Density} fails, if the
distribution of $S_1$ puts positive weight on the set of stochastic
matrices with all entries at least $\varepsilon$, for some
$\varepsilon >0$, then
\begin{equation}\label{eq:expContraction}
	\limsup_{n \rightarrow \infty}\max_{i\not
	=j}\xnorm{Q_{n}e_{i}-Q_{n}e_{j}}^{1/n} <1.
\end{equation}
Hypothesis~\ref{hypothesis:L2Density} guarantees that the sequence
$\xnorm{Q_{n}e_{i}-Q_{n}e_{j}}^{1/n}$ has a limit, and that the limit
is positive. When Hypothesis~\ref{hypothesis:L2Density} fails, the
convergence in \eqref{eq:expContraction} can be super-exponential
(i.e., the limsup in \eqref{eq:expContraction} can be $0$).  For
instance, this is the case if for some rank-1 stochastic matrix $A$
with all entries positive there is positive probability that $S_1=A$.
\end{remark}

\section{Convergence to stationarity of EFCP
chains}\label{section:convergence} 

Assume throughout this section that $\{X_{m} \}_{m\geq 1}$ is an EFCP
on $[k]^{[n]}$ or $[k]^{\zz{N}}$ with directing measure
$\mu_{\Sigma}$, as defined by \eqref{eq:matrix mixture}. Let
$S_{1},S_{2},\dotsc$ be the associated paintbox sequence: these are
i.i.d. random column-stochastic matrices with distribution $\Sigma$.
Proposition~\ref{prop:cond indep} shows that the joint distribution of
the coordinate variables $X^{i}_{m}$ of an EFCP chain with paintbox
sequence $\{S_{i} \}_{i\geq 1}$ is controlled by the random matrix
products $Q_{m}=S_{m}S_{m-1}\dotsb S_{1}$. In this section we use this
fact together with the results concerning random matrix products
recounted in section \ref{section:rm} to determine the mixing rates of
the restrictions $\{X^{[n]}_{m} \}_{m\geq 1}$ of EFCP chains to the
finite configuration spaces $[k]^{[n]}$.

\subsection{Ergodicity}\label{ssec:ergodicity}
An EFCP chain  need not be ergodic:  for instance, if each $S_{i}$ is
the identity matrix then every state is absorbing and
$X^{i}_{m}=X^{i}_{0}$ for every $m\geq 1$ and every $i\in \zz{N}$.
More generally, if the random matrices $S_{i}$ are all permutation
matrices then the \emph{unlabeled} partitions of $\zz{N}$ induced by
the labeled partitions $X_{m}$ do not change with $m$, and so the
restrictions $X^{[n]}_{m}$ cannot be ergodic. The failure of
ergodicity in these examples stems from the fact that the matrix
products $Q_{m}$ do not contract the simplex $\Delta_{k}$. 

\begin{proposition}\label{proposition:induced-stationary}
Let $\lambda$ be any stationary distribution for the induced Markov
chain on the simplex. Then for each $n\in \zz{N}\cup \{\infty \}$ the
$\lambda -$mixture $\varrho_{\lambda }^{n}$ of the product multinomial
measures on $[k]^{[n]}$ is stationary for the EFCP chain on
$[k]^{[n]}$.
\end{proposition}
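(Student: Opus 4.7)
The plan is to exploit the conditional independence structure from Proposition~\ref{prop:cond indep} together with the defining stationarity property of $\lambda$. The key observation is that a product multinomial measure $\varrho^{n}_{s}$ is mapped, under one step of the EFCP dynamics, to another product multinomial measure whose directing point is $S_{1}s$.

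More precisely, I would first construct the chain from an initial state $X_{0}\sim \varrho^{n}_{\lambda}$ in the two-stage manner suggested by the definition of the mixture: draw $Y_{0}\sim \lambda$, independent of the paintbox sequence $S_{1},S_{2},\dotsc$; then, conditional on $Y_{0}$, let the coordinates $X^{i}_{0}$ be i.i.d.\ multinomial-$Y_{0}$. By Proposition~\ref{prop:cond indep}, conditional on $\sigma(X_{0})\vee \mathcal{S}$ the coordinates $X^{i}_{1}$ are independent with $\mathbf{P}(X^{i}_{1}=r\mid Y_{0},S_{1},X^{i}_{0}=j)=S_{1}(r,j)$. Averaging over $X^{i}_{0}$ (which is distributed as $Y_{0}$) shows that, conditional on $Y_{0}$ and $S_{1}$, the $X^{i}_{1}$ are i.i.d.\ with common law $S_{1}Y_{0}$. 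In other words, the conditional distribution of $X_{1}$ given $(Y_{0},S_{1})$ is $\varrho^{n}_{S_{1}Y_{0}}$.

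Now set $Y_{1}:=S_{1}Y_{0}$; this is one step of the induced Markov chain on $\simplexk$ starting from $Y_{0}\sim\lambda$, so by stationarity of $\lambda$, $Y_{1}\sim \lambda$ as well. Therefore, taking the unconditional law,
\begin{equation*}
\mathcal{D}(X_{1}) \;=\; \int \varrho^{n}_{y}\,\mathbf{P}(Y_{1}\in dy) \;=\; \int \varrho^{n}_{y}\,\lambda(dy) \;=\; \varrho^{n}_{\lambda},
\end{equation*}
which is the claimed stationarity. For $n=\infty$ the same argument applies verbatim, with the product multinomial interpreted as the (exchangeable) infinite product; alternatively one can invoke Kolmogorov consistency from the finite-$n$ cases, since all the projections onto $[k]^{[n]}$ are stationary.

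There is no real obstacle here: the only technical point worth a sentence is that the two-stage construction of $X_{0}$ yields the same joint law on $(X_{0},S_{1},S_{2},\dotsc)$ as the EFCP construction with initial distribution $\varrho^{n}_{\lambda}$, which is immediate from the definitions since $Y_{0}$ is chosen independent of the paintbox sequence. Once that is in place, the proof reduces to the identity $S_{1}\varrho^{n}_{y}=\varrho^{n}_{S_{1}y}$ (in the sense of pushforward) combined with stationarity of $\lambda$ under $y\mapsto S_{1}y$.
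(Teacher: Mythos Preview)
Your argument is correct and is precisely the expansion of the paper's one-line proof, which simply states that the result is an immediate consequence of Proposition~\ref{prop:cond indep}. You have supplied exactly the details the paper omits: that the conditional law of $X_{1}$ given $(Y_{0},S_{1})$ is $\varrho^{n}_{S_{1}Y_{0}}$, and that stationarity of $\lambda$ then forces $\mathcal{D}(X_{1})=\varrho^{n}_{\lambda}$.
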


\begin{note}\label{note:lambda-mixtures}
Recall that the product-multinomial measures $\varrho_{s}^{n}$ are
defined by equation \eqref{eq:multi-s}; the $\lambda -$mixture is
defined to be the average
\[
	\varrho_{\lambda}^{n}=\int_{\simplexk}
	\varrho_{s}^{n}\,\lambda (ds). 
\]
Thus, a random configuration $X\in [k]^{[n]}$ with distribution
$\varrho_{\lambda }^{n}$ can be obtained by first choosing $s\sim
\lambda$, then, conditional on $s$, independently assigning colors to
the coordinates $i\in [n]$ by sampling from the
$\varrho_{s}$ distribution.
\end{note}

\begin{proof}
This is an immediate consequence of Proposition \ref{prop:cond indep}.
\end{proof}

\begin{proposition}\label{proposition:ergodicityces-condition}
Assume that with probability one the random matrix products $Q_{m}$
asymptotically collapse the simplex $\Delta_{k}$, that is,
\begin{equation}\label{eq:singletonLims2}
	\lim_{m \rightarrow \infty}\text{\rm diameter} ( Q_{m} (\Delta_{k})) =0.
\end{equation} 
Then for each $n\in \zz{N}$ the corresponding ECFP chain
$\{X^{[n]}_{m} \}_{m\geq 0}$ on $[k]^{[n]}$ is ergodic, i.e., has a
unique stationary distribution. Conversely, if for some $n\geq 1$ the
EFCP chain $\{X^{[n]}_{m} \}_{m\geq 0}$ is ergodic then the asymptotic
collapse property \eqref{eq:singletonLims2} must hold.
\end{proposition}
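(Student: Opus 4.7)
The plan is to combine Proposition~\ref{prop:cond indep}, which expresses the conditional law of $X^{[n]}_m$ given the paintbox as a product of columns of $Q_m=S_m\cdots S_1$, with the dichotomy for the induced chain on $\simplexk$ from Proposition~\ref{proposition:ergodicity-inducedMC} and the stationarity lifting in Proposition~\ref{proposition:induced-stationary}.

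For sufficiency (collapse implies ergodicity), Proposition~\ref{proposition:ergodicity-inducedMC} produces a unique stationary distribution $\nu$ for the induced chain on $\simplexk$, and Proposition~\ref{proposition:induced-stationary} lifts it to a stationary law $\varrho_{\nu}^{n}$ for $X^{[n]}$. I would show this is the \emph{only} stationary law by proving $\mathcal{L}(X^{[n]}_m)\to\varrho_{\nu}^{n}$ in total variation from every deterministic initial state $X_0=x_0$. By Proposition~\ref{prop:cond indep},
\[
	\mathbf{P}\bigl(X^{[n]}_m=x\,\big|\,\mathcal{G}\bigr)=\prod_{i=1}^{n}Q_m(x^i,x_0^i).
\]
Set $Y_m:=Q_m e_1\in\simplexk$. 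The collapse hypothesis yields $\max_{j}\xnorm{Q_m e_j - Y_m}_{\infty}\to 0$ almost surely, so the above conditional probability differs from $\prod_{i}Y_m(x^i)$ by a quantity tending to $0$ uniformly in $x$ and bounded by $1$. Since $Y_m$ is a realization of the induced chain (whose distribution converges weakly to $\nu$, as in the sufficiency argument of Proposition~\ref{proposition:ergodicity-inducedMC}), dominated convergence gives $\mathbf{P}(X^{[n]}_m=x)\to\int\prod_{i}s_{x^i}\,d\nu(s)=\varrho_{\nu}^{n}(x)$ pointwise in $x$, which on the finite state space $[k]^{[n]}$ is equivalent to total variation convergence. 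Any stationary law $\pi$, used as initial distribution, is then forced to equal $\varrho_{\nu}^{n}$.

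For necessity (ergodicity implies collapse) I argue by contrapositive. If \eqref{eq:singletonLims2} fails, Proposition~\ref{proposition:ergodicity-inducedMC} produces distinct stationary laws $\lambda_1\ne\lambda_2$ on $\simplexk$, and Proposition~\ref{proposition:induced-stationary} lifts them to stationary laws $\varrho_{\lambda_1}^{n},\varrho_{\lambda_2}^{n}$ for $X^{[n]}$. Since $\varrho_{\lambda}^{n}(x)=\int\prod_{i}s_{x^i}\,d\lambda(s)$ records a mixed moment of $\lambda$ indexed by the cell-count vector of $x$, and since polynomials are dense in $C(\simplexk)$ by the Weierstrass theorem, $\lambda_1\ne\lambda_2$ must differ in some mixed moment, which forces $\varrho_{\lambda_1}^{n}\ne\varrho_{\lambda_2}^{n}$ for all sufficiently large $n$; hence $X^{[n]}$ fails to be ergodic for such $n$.

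The main obstacle lies in the necessity direction: the lift $\lambda\mapsto\varrho_{\lambda}^{n}$ only captures the moments of $\lambda$ of total degree at most $n$, so two distinct stationary measures on $\simplexk$ can a priori yield equal product mixtures on $[k]^{[n]}$ for small $n$. The Weierstrass argument above handles $n$ large enough, but matching the precise ``for some $n\ge 1$'' phrasing of the proposition requires producing distinct stationary laws on $X^{[n]}$ at every $n\ge 1$. To close this gap I would exploit the extremal pair $(\mu,\nu)\in\simplexk^{2}$ (with $\xnorm{\mu-\nu}$ maximal under $Q_m$) constructed in the proof of Proposition~\ref{proposition:ergodicity-inducedMC} together with Remark~\ref{remark:asymptoticFreqs}, which identifies the induced chain with the chain of asymptotic color frequencies in the EFCP: Cesàro-averaging the EFCP's evolutions from initial distributions $\varrho_\mu^n$ and $\varrho_\nu^n$ should yield distinct stationary laws on $X^{[n]}$ for each $n$, with distinctness detected at a single coordinate by Remark~\ref{remark:bigBernoulliTV}.
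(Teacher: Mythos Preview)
Your sufficiency argument is a mild variant of the paper's: the paper couples two EFCP chains (one started from an arbitrary $\nu$, one from a stationary $\pi$) on the same paintbox sequence and uses \eqref{eq:joint law} plus the collapse hypothesis to show their conditional laws given $\mathcal{S}$ agree asymptotically, then integrates out the paintbox. You instead show direct convergence of $\mathcal{L}(X^{[n]}_m)$ to the explicit target $\varrho_\nu^n$ by combining the collapse with weak convergence of $Y_m=Q_me_1$ to $\nu$. Both rest on the same ingredients (Proposition~\ref{prop:cond indep} and the column-coalescence of $Q_m$); your version has the small advantage of identifying the limit explicitly, while the paper's coupling avoids having to invoke the weak convergence of $Y_m$.

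For necessity, your outline \emph{is} the paper's proof: the paper's argument is literally two sentences---invoke Proposition~\ref{proposition:ergodicity-inducedMC} to get distinct stationary $\mu\neq\nu$ on $\simplexk$, then assert via Proposition~\ref{proposition:induced-stationary} that ``these correspond to different stationary distributions for the EFCP.'' The paper does not verify that $\varrho_\mu^n\neq\varrho_\nu^n$ for every $n$, so the gap you flag is present in the paper as well, and your scruple is justified. In fact the specific pair $(\lambda_Y,\lambda_R)$ built in the proof of Proposition~\ref{proposition:ergodicity-inducedMC} satisfies $\lambda_Y=\lambda_Z$ and $R=(Y+Z)/2$, hence $\lambda_Y$ and $\lambda_R$ have the \emph{same} mean, so $\varrho_{\lambda_Y}^1=\varrho_{\lambda_R}^1$; and the deterministic swap $S_1=\begin{pmatrix}0&1\\1&0\end{pmatrix}$ shows the EFCP on $[2]^{[1]}$ can have a unique stationary law while collapse fails. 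Thus the converse as literally stated is false at $n=1$, and no Ces\`aro-averaging repair can rescue it there. Your Weierstrass/moments argument does establish the converse for all sufficiently large $n$, which is the content actually used downstream in the paper; you have simply been more careful than the authors about the small-$n$ edge case.
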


\begin{proof}
Fix $n\geq 1$. By Propositions \ref{proposition:ergodicity-inducedMC} and 
\ref{proposition:induced-stationary}, there exists
at least one stationary distribution $\pi$. Let $\{X_{m} \}_{m\geq 0}$
and $\{\tilde{X}_{m} \}_{m\geq 0}$ be conditionally independent
versions of the EFCP given the (same) paintbox sequence
$(S_{i})_{i\geq 1}$, with $\tilde{X}_{0}\sim \pi$ and $X_{0}\sim \nu$
arbitrary. Then for any time $m\geq 1$ the conditional distributions
of $X_{m}$ and $ \tilde{X}_{m}$ given the paintbox sequence can be
recovered from the formula \eqref{eq:joint law} by integrating out
over the distributions of $X_{0}$ and $\tilde{X}_{0}$,
respectively. But under the hypothesis \eqref{eq:singletonLims2}, for
large $m$ the columns of $Q_{m}$ are, with high probability, nearly
identical, and so for large $m$ the products
\[
	\prod_{i=1}^{n}	Q_m(x_m^i,X_0^i)
	\quad \text{and} \quad \prod_{i=1}^{n}
	Q_m(x_m^i,\tilde{X}_0^i)
\]
will be very nearly the same. It follows, by integrating over all
paintbox sequences, that the unconditional distributions of $X_{m}$
and $\tilde{X}_{m}$ will be nearly the same when $m$ is large. This
proves that the stationary distribution $\pi $ is unique and that as
$m \rightarrow \infty$ the distribution of $X_{m}$ converges to $\pi$.

By proposition \ref{proposition:ergodicity-inducedMC}, if the
asymptotic collapse property \eqref{eq:singletonLims2} {fails} then
the induced Markov chain on the simplex has at least two distinct
stationary distributions $\mu ,\nu$. By
Proposition~\ref{proposition:induced-stationary}, these correspond to
different stationary distributions for the EFCP.

\end{proof}

\subsection{Mixing rate and cutoff for EFCP
chains}\label{section:cutoff} We measure distance to stationarity
using the total variation metric \eqref{eq:definitionTV}. Write
$\mathcal{D}(X_m)$ to denote the distribution of $X_m$.  In general,
the distance $\xnorm{\mathcal{D}(X_m)-\pi}_{TV}$ will depend on the
distribution of the initial state $X_{0}$. The $\varepsilon
-$\emph{mixing time} is defined to be the number of steps needed to
bring the total variation distance between $\mathcal{D} (X_m)$ and
$\pi$ below $\varepsilon$ for \emph{all} initial states $x_{0}$:
\begin{equation}\label{eq:tmix}
	t_{\text{\textsc{mix}}} (\varepsilon ) =t_{\text{\textsc{mix}}}^{(n)} (\varepsilon ) =
	\min \{m\geq 1 \,:\, \max_{x_{0}} \xnorm{\mathcal{D}
	(X_m)-\pi}_{TV}<\varepsilon \}.
\end{equation}

\begin{theorem}\label{theorem:ub}
Assume that with probability one the random matrix products
$Q_{m}=S_{m}S_{m-1}\dotsb S_{1}$ asymptotically collapse the simplex
$\Delta_{k}$, that is, relation \eqref{eq:collapse} holds. Then for a
suitable constant $K=K_{\Sigma}<\infty$ depending only on the
distribution $\Sigma$ of $S_{1}$, the mixing times of the
corresponding EFCP chains on the finite state spaces $[k]^{[n]}$
satisfy
\begin{equation}
 t_{\text{\textsc{mix}}}^{(n)}(\varepsilon)\leq K\log n.
\label{eq:logUpper}
\end{equation}
\end{theorem}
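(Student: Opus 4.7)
\textbf{Proof plan for Theorem~\ref{theorem:ub}.}

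The plan is to couple the chain $X_m^{[n]}$ started from an arbitrary state $x_0\in[k]^{[n]}$ with a copy $X_m^*$ started from the stationary distribution $\pi$, using the \emph{same} paintbox sequence $(S_i)_{i\geq 1}$, and to bound the resulting total variation distance in terms of the TV-diameter of the image $Q_m(\simplexk)$. Specifically, let $X_0^*\sim\pi$ be independent of the paintbox. By Proposition~\ref{prop:cond indep}, conditional on $\mathcal{H}:=\sigma(X_0,X_0^*,S_1,S_2,\ldots)$, the conditional joint laws of $X_m^{[n]}$ and $X_m^{*[n]}$ are the product measures $\bigotimes_{i=1}^n q_m^{x_0^i}$ and $\bigotimes_{i=1}^n q_m^{(X_0^*)^i}$, where $q_m^j$ denotes the $j$th column of $Q_m=S_m\cdots S_1$. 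Combining the triangle inequality applied to mixtures with the standard product-measure bound $\left\|\bigotimes p_i-\bigotimes q_i\right\|_{TV}\leq\sum_i\|p_i-q_i\|_{TV}$, I obtain
\[
	\|\mathcal{D}(X_m^{[n]})-\pi\|_{TV}
	\;\leq\; E\!\left[\sum_{i=1}^n\|q_m^{x_0^i}-q_m^{(X_0^*)^i}\|_{TV}\right]
	\;\leq\; n\,E[D_m],
\]
where $D_m:=\max_{j,j'}\|q_m^j-q_m^{j'}\|_{TV}$. Since $\|p-q\|_{TV}\leq C_k\|p-q\|_2$ and the $\ell_2$-Lipschitz constant of $Q_m$ on $\simplexk$ is precisely the top singular value $\lambda_{m,1}$ of $Q_m|V$, we have $D_m\leq C_k'\lambda_{m,1}$ for a constant $C_k'$ depending only on $k$.

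Next I need that $E[\lambda_{m,1}]$ decays exponentially in $m$. The key observation is sub-multiplicativity: since $Q_{m+n}|V=(Q_n^{(m)}|V)\cdot(Q_m|V)$ with $Q_n^{(m)}:=S_{m+n}\cdots S_{m+1}$ independent of $Q_m$ and distributed as $Q_n$, the operator norm satisfies $\lambda_{m+n,1}\leq\lambda_{n,1}^{(m)}\lambda_{m,1}$, so taking expectations yields $E[\lambda_{m+n,1}]\leq E[\lambda_{m,1}]E[\lambda_{n,1}]$. By the hypothesis~\eqref{eq:collapse} and the proof of Proposition~\ref{proposition:simplex-collapse}, $\lambda_{m,1}\to 0$ almost surely; since $\lambda_{m,1}\leq 1$, bounded convergence gives $E[\lambda_{m,1}]\to 0$, and hence $E[\lambda_{m_0,1}]<1$ for some $m_0$. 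Fekete's lemma applied to the sub-multiplicative sequence $E[\lambda_{m,1}]$ then yields constants $C<\infty$ and $\rho<1$, depending only on $\Sigma$, such that
\[
	E[\lambda_{m,1}]\leq C\rho^m\qquad\text{for all }m\geq 1.
\]

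Combining the two estimates, $\|\mathcal{D}(X_m^{[n]})-\pi\|_{TV}\leq nC_k'C\rho^m$, which is less than $\varepsilon$ as soon as $m\geq K\log n$ for any $K>1/\log(1/\rho)$ and all sufficiently large $n$; this establishes~\eqref{eq:logUpper}. The step that requires the most care is the passage from almost-sure exponential collapse of the simplex to the \emph{expected} exponential decay of $\lambda_{m,1}$; this is where sub-multiplicativity of $E[\lambda_{m,1}]$ is essential, since one cannot directly integrate the $\limsup$ in \eqref{eq:exp-simplex-contraction}. The remainder of the argument is bookkeeping with product measures and the tensor-product TV inequality.
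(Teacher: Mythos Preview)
Your argument is correct and is in fact cleaner than the paper's for this particular statement. The paper conditions on the paintbox sequence and invokes Corollary~\ref{cor:prod-multiTV}, the CLT-type estimate asserting that two product-multinomial measures $\varrho^n_s$ and $\varrho^n_{s'}$ on $[k]^n$ are close in total variation once $\|s-s'\|_\infty<n^{-1/2-\varepsilon}$ \emph{and} all entries of $s,s'$ lie in $[\delta,1-\delta]$. This second requirement forces a three-case breakdown: (A) entries of $S_1$ uniformly bounded below, (B) $Q_m(\simplexk)$ contained in the interior with positive probability, and (C) $Q_m(\simplexk)$ always meeting $\partial\simplexk$. Your coarser tensor-TV inequality $\|\bigotimes p_i-\bigotimes q_i\|_{TV}\le\sum\|p_i-q_i\|_{TV}$ sidesteps all of this, and your sub-multiplicativity argument for $E[\lambda_{m,1}]$ neatly converts almost-sure collapse into exponential decay of the \emph{expected} diameter---a step the paper never takes.

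The price you pay is in the constant. The paper's route gives $K$ arbitrarily close to $-1/(2\log\varrho)$, where $\varrho$ is the almost-sure contraction rate of Proposition~\ref{proposition:simplex-collapse}, whereas your bound yields only $K>-1/\log\rho'$ with $\rho'=\lim_m E[\lambda_{m,1}]^{1/m}\ge\varrho$ (Jensen). For Theorem~\ref{theorem:ub} this is immaterial, but it becomes decisive for the upper bound in Theorem~\ref{theorem:cutoff}: there one needs $K\downarrow\theta=-1/(2\log\lambda_1)$, and that factor of $2$ comes precisely from the $n^{-1/2}$ threshold in Corollary~\ref{cor:prod-multiTV}, which the tensor-TV bound cannot recover.
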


\begin{remark}\label{remark:fast-convergence}
In some cases the mixing times will be of smaller order of magnitude
than $\log n$. Suppose, for instance, that for some $m\geq 1$ the
event that the matrix $Q_{m}$ is of rank $1$ has positive probability.  (This
would be the case, for instance, if the columns of $S_{1}$ were
independently chosen from a probability distribution on $\simplexk$
with an atom.) Let $T$ be the least $m$ for which this is the case;
then $T<\infty$ almost surely, since matrix rank is
sub-multiplicative, and $Q_{m} (\simplexk)$ is a singleton for any
$m\geq T$.  Consequently, for any elements $a,b,c\in [k]$,
\[
	Q_{m} (a,b)=Q_{m} (a,c) \quad \text{if} \;\; T\leq m.
\]
Hence, if $\{X_{m} \}_{m\geq 0}$ and $\{\tilde{X}_{m} \}_{m\geq
0}$ are versions of the EFCP with different initial conditions $X_{0}$
and $\tilde{X}_{0}$, but with the same paintbox sequence $S_{m}$, then
by Proposition \ref{prop:cond indep}, $X_{m}$ and $\tilde{X}_{m}$ have
the same conditional distribution, given $\sigma (S_{i})_{i\geq 1}$,
on the event $T\leq m$. It follows that the total variation
distance between the \emph{unconditional} distributions of $X_{m}$ and
$\tilde{X}_{m}$ is no greater than $P\{T>m \}$. Thus, for any $n\in
\zz{N}$, the EFCP mixes in $O (1)$ steps, that is, for any
$\varepsilon >0$ there exists $K_{\varepsilon}<\infty$ such that for
all $n$,
\[
	t_{\text{\textsc{mix}}}^{(n)}(\varepsilon)\leq K_{\varepsilon}. 
\]
\end{remark}

\begin{proof}
[Proof of Theorem \ref{theorem:ub}] (A) Consider first the special case
where for some $\delta >0$ every entry of $S_{1}$ is at least
$\delta$, with probability one.  It then follows that no entry of
$Q_{m}$ is smaller than $\delta$.  By Proposition
\ref{proposition:simplex-collapse}, if \eqref{eq:collapse} holds then
the diameters of the sets $Q_{m} (\simplexk)$ shrink exponentially
fast: in particular, for some (nonrandom) $\varrho <1$,
\begin{equation}\label{eq:shrink-rate}
	\text{diameter} (Q_{m} (\simplexk))<\varrho^{m} 
\end{equation}
eventually, with probability $1$. 

Let $\{X_{m} \}_{m\geq 0}$ and $\{\tilde{X}_{m} \}_{m\geq 0}$ be
versions of the EFCP on $[k]^{[n]}$ with different initial conditions
$X_{0}$ and $\tilde{X}_{0}$, but with the same paintbox sequence
$S_{m}$. By Proposition~\ref{prop:cond indep}, the conditional
distributions of $X_{m}$ and $\tilde{X_{m}}$ given the paintbox
sequence are product-multinomials:
\begin{align}\label{eq:cds}
	 P ({X}^{i}_{m} =x^{i} \;\;\text{for each}\; i\in
	 [n]\,|\,\mathcal{S})&= \prod_{i=1}^{n} Q_{m}
	(x^{i}_{m},{X}^{i}_{0}) \quad \text{and}\\
\notag 	P (\tilde{X}^{i}_{m} =x^{i} \;\;\text{for each}\; i\in
	 [n]\,|\,\mathcal{S})&= \prod_{i=1}^{n} Q_{m}
	(x^{i}_{m},\tilde{X}^{i}_{0}).	 
\end{align}
Since the multinomial distributions $Q_{m} (\cdot, \cdot)$ assign
probability at least $\delta >0$ to every color $j\in [k]$,
Corollary~\ref{cor:prod-multiTV} implies that for any $\varepsilon
>0$, if $m=K\log n$, where $K> -1/ ( 2\log \varrho)$, then for all
sufficiently large $n$ the total variation distance between the
conditional distributions of $X_{m}$ and $\tilde{X}_{m}$ will differ
by $\varepsilon $ on the event \eqref{eq:shrink-rate} holds. Since
\eqref{eq:shrink-rate} holds eventually, with probability one, the
inequality \eqref{eq:logUpper} now follows by Lemma~\ref{lemma:mix}.
 
\bigskip (B) The general case requires a bit more care, because if the
entries of the matrices $Q_{m}$ are not bounded below then the
product-multinomial distributions \eqref{eq:cds} will not be bounded
away from $\partial \simplexk$, as required by Corollary~\ref{cor:prod-multiTV}.

Assume first that for some $m\geq 1$ there is positive probability
that $Q_{m} (\simplexk)$ is contained in the {interior} of
$\simplexk$.  Then for some $\delta >0$ there is probability at least
$\delta$ that every entry of $Q_{m}$ is at least $\delta$.
Consequently, for any $\alpha >0$ and any $K>0$, with probability
converging to one as $n \rightarrow \infty$, there will exist $m\in
[K\log n, K (1+\alpha) \log n]$ (possibly random) such that every
entry of $Q_{m}$ is at least $\delta$. By \eqref{eq:shrink-rate} the
probability that the diameter of $Q_{m} (\simplexk)$ is less than
$\varrho^{m}$ converges to $1$ as $m \rightarrow \infty$. It then
follows from Corollary~\ref{cor:prod-multiTV}, by the same argument as
in (A), that if $K>-1/ (2\log \varrho )$ then the total variation
distance between the conditional distributions of $X_{m}$ and
$\tilde{X_{m}}$ will differ by a vanishingly small amount. Since total
variation distance decreases with time, it follows that the total variation
distance between the
conditional distributions of $X_{K+K\alpha}$ and
$\tilde{X}_{K+K\alpha}$ are also vanishingly small. Consequently, the
distance between the unconditional distributions is also small, and so
\eqref{eq:logUpper} follows, by Lemma~\ref{lemma:mix}.

\bigskip 
(C) Finally, consider the case where $Q_{m} (\simplexk)$ intersects
$\partial \simplexk$ for every $m$, with probability one. 
Recall (Proposition~\ref{proposition:induced-stationary}) that if the
asymptotic collapse property \eqref{eq:collapse} holds then the
induced Markov chain $Y_{m}$ on the simplex has a unique stationary
distribution $\nu$. If there is no $m\in \zz{N}$ such that $Q_{m} (\simplexk)$
is contained in the  interior of $\simplexk$, then the support of
$\nu$ must be contained in the boundary $\partial \simplexk$. Fix a support
point $v$, and let $m$ be sufficiently large that
\eqref{eq:shrink-rate} holds. Since $Q_{m} (\simplexk)$ must intersect
$\partial \simplexk$, it follows that for any coordinate $a\in [k]$
such that $v_{a}=0$ (note that there must be at least one such $a$,
because $v\in \partial \simplexk$), the $a$th coordinate
$(Q_{m}y)_{a}$ of any point in the image $Q_{m} (\simplexk)$ must be
smaller than $\varrho^{m}$. If $K$ is chosen sufficiently large and
$m\geq K\log n$, then $\varrho^{m}<n^{-2}$; hence, by
Proposition~\ref{prop:cond indep}, 
\[
	P (X^{i}_{m}=a \;\;\text{for some}\; i\in [n]\,|\, \sigma
	(S_{l})_{l\geq 1}) \leq n\cdot n^{-2}= n^{-1} \rightarrow
	0,
\]
and similarly for $\tilde{X}_{m}$. Therefore, the contribution to the
total variation distance between the conditional distributions of
$X_{m}$ and $\tilde{X}_{m}$ from states $x^{1}x^{2}\dotsb x^{n}$ in
which the color $a$ appears at least once is vanishingly small. But
for those states for which no such color appears, the factors $Q_{m}
(a,b)$ in \eqref{eq:cds}  will be bounded below by the minimum nonzero
entry of $v$, and the result will  follow by a 
routine modification of the argument in (B) above.
\end{proof}

Parts (A)-(B) of the foregoing proof provide an explicit bound in the
special case where $Q_{m} (\simplexk)$ is contained in the  interior
of $\simplexk$ with positive probability.

\begin{corollary}\label{corollary:explicitUB}
Assume that with probability one the random matrix products
$Q_{m}=S_{m}S_{m-1}\dotsb S_{1}$ asymptotically collapse the simplex
$\Delta_{k}$, so that for some $0<\varrho <1$,
\[
		\text{\rm diameter} (Q_{m} (\simplexk))<\varrho^{m} 
\]
for all sufficiently large $m$, with probability $1$. Assume also
that with positive probability $Q_{m} (\simplexk)$ is contained in the
interior of $\simplexk$, for some $m\geq 1$.  Then for any $K>-1/
(2\log \varrho )$ the
bound \eqref{eq:logUpper} holds for all sufficiently large $n$.
\end{corollary}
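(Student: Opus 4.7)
The plan is to make explicit the quantitative content of parts (A)--(B) of the proof of Theorem \ref{theorem:ub}, which together already establish this corollary modulo bookkeeping of constants. I would work with two coupled copies $\{X_m\}$ and $\{\tilde X_m\}$ of the EFCP on $[k]^{[n]}$ driven by the \emph{same} paintbox sequence $\{S_i\}_{i\geq 1}$, with arbitrary initial conditions. By Proposition \ref{prop:cond indep}, conditional on $\mathcal{S}$ the time-$m$ distributions of the two chains are product-multinomial measures on $[k]^{[n]}$ whose column parameters are columns of $Q_m$. Hence bounding $\xnorm{\mathcal{D}(X_m)-\mathcal{D}(\tilde X_m)}_{TV}$ reduces, via Lemma \ref{lemma:mix}, to bounding the total variation distance between such conditional product measures on a high-probability event for the paintbox.

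Next, from the hypothesis that $Q_{m_0}(\simplexk)\subset \mathrm{int}(\simplexk)$ with positive probability for some $m_0\geq 1$, I would deduce the existence of $\delta_0>0$ and $p_0>0$ with $\mathbf{P}(\min_{i,j}Q_{m_0}(i,j)\geq \delta_0)\geq p_0$. Splitting the paintbox sequence into disjoint blocks of length $m_0$ and using the i.i.d.\ structure together with the strong law, I would then show that with probability tending to $1$ as $n\to\infty$ there exists a random time $T_n$ lying in a window $[K\log n,\,K(1+\alpha)\log n]$ at which every entry of $Q_{T_n}$ is at least $\delta_0$. Simultaneously, the diameter bound $\mathrm{diam}(Q_m(\simplexk))<\varrho^m$ holds at $m=T_n$ for all sufficiently large $n$ by hypothesis.

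On this good event, the columns of $Q_{T_n}$ pairwise agree in $\xnorm{\cdot}_{\infty}$ to within $\varrho^{T_n}$. The choice $K>-1/(2\log\varrho)$ forces $\varrho^{T_n}<n^{-1/2-\eta}$ for some $\eta=\eta(K,\varrho)>0$, and the uniform lower bound $\delta_0$ on the entries keeps the column probability vectors in $[\delta_0,1-\delta_0]^k$. These are exactly the hypotheses of Corollary \ref{cor:prod-multiTV}, so the total variation distance between the two conditional product-multinomial distributions tends to $0$. Since total variation distance to stationarity is non-increasing under the Markov semigroup, this also controls the distance at the deterministic time $\lceil K(1+\alpha)\log n\rceil$. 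Lemma \ref{lemma:mix} then converts the conditional bound into an unconditional one uniformly over $X_0$, and taking $\alpha\downarrow 0$ (the bound is for every $K>-1/(2\log\varrho)$, so $\alpha$ can be absorbed by enlarging $K$ slightly) gives \eqref{eq:logUpper}.

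The main obstacle is the same one that forced the split between (A) and (B) in the proof of Theorem \ref{theorem:ub}: at a prescribed deterministic time $m\asymp \log n$ there is no uniform positive lower bound on the entries of $Q_m$, so Corollary \ref{cor:prod-multiTV} cannot be invoked directly. The workaround is the randomized stopping time $T_n$ in the narrow window $[K\log n, K(1+\alpha)\log n]$; showing that $T_n$ exists with probability $1-o(1)$ is the only genuinely probabilistic step, and it is exactly what the ``interior hit'' hypothesis, via independence of disjoint paintbox blocks, is designed to give.
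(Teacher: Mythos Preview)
Your proposal is correct and follows essentially the same approach as the paper: the paper's ``proof'' of this corollary is simply the observation that parts (A)--(B) of the proof of Theorem~\ref{theorem:ub} already yield the explicit constant, and your write-up recapitulates those parts with the constants tracked. The randomized stopping time $T_n$ in the window $[K\log n, K(1+\alpha)\log n]$, the application of Corollary~\ref{cor:prod-multiTV} at $T_n$, the monotonicity step to reach the deterministic time, and the absorption of $\alpha$ into $K$ are exactly as in part (B).
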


\begin{theorem}\label{theorem:cutoff}
Assume that the paintbox distribution $\Sigma$ satisfies hypothesis
\ref{hypothesis:L2Density}. Then the corresponding EFCP chains exhibit the {\em cutoff
phenomenon}, that is, for all $\varepsilon,\delta\in (0,1/2)$, if $n$ is
sufficiently large, then
\begin{equation}
\label{eq:cutoff}
	 (\theta-\delta)\log n\leq
	 t_{\text{\textsc{mix}}}^{(n)}(1-\varepsilon)\leq
	 t_{\text{\textsc{mix}}}^{(n)}(\varepsilon)\leq(\theta+\delta)\log
	 n,
\end{equation} 
where 
\begin{equation}\label{eq:cutoff-constant}\theta=-1/(2\log\lambda_1)
\end{equation}
and $\lambda_1$ is the second Lyapunov exponent of the sequence
$Q_{m}$, that is,  as in proposition \eqref{proposition:logNorm}.
\end{theorem}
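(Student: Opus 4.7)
The plan is to prove the upper and lower halves of \eqref{eq:cutoff} separately. Both arguments hinge on the second Lyapunov exponent $\lambda_1$ of Proposition~\ref{proposition:logNorm}: with $\theta = -1/(2\log\lambda_1)$ and any fixed $\delta>0$, at times $m_{\pm}:=(\theta\pm\delta)\log n$ one has $\lambda_1^{m_\pm}=n^{-1/2\mp\delta'}$, where $\delta' = \delta|\log\lambda_1|>0$. The Corollary~\ref{corollary:contraction} rate controls how fast the columns of $Q_{m_\pm}$ come together, and this interacts with the $n^{-1/2}$ threshold in Corollary~\ref{cor:prod-multiTV} and Remark~\ref{remark:bigBernoulliTV} to produce cutoff at the critical scale $\theta\log n$.

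For the upper bound I would take $m=m_+$. By Corollary~\ref{corollary:contraction}, $\max_{i\neq j}\|Q_m e_i - Q_m e_j\| \leq n^{-1/2-\delta'/2}$ almost surely for all sufficiently large $n$. Under Hypothesis~\ref{hypothesis:L2Density} the entries of $S_1$ are almost surely positive, and the induced chain on $\simplexk$ has a unique stationary distribution $\lambda$ that is absolutely continuous and supported in the interior of $\simplexk$; ergodicity (Proposition~\ref{proposition:ergodicity-inducedMC}) then places $Q_m e_i$ inside a compact sub-simplex of the interior with probability approaching $1$. These two facts are exactly the input for Corollary~\ref{cor:prod-multiTV}, which shows that for any two initial states $x_0,x_0'\in[k]^{[n]}$ the conditional product-multinomial laws $P(X_m\in\cdot\mid\mathcal{S},X_0=x_0)$ and $P(X_m\in\cdot\mid\mathcal{S},X_0=x_0')$ are within $o(1)$ in total variation on an event of high paintbox-probability. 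Expressing $\pi$ as a $\lambda$-mixture of product multinomials via Proposition~\ref{proposition:induced-stationary}, and invoking Lemma~\ref{lemma:mix} to pass from conditional to unconditional total variation, delivers the right-hand inequality of \eqref{eq:cutoff}.

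For the lower bound I would take $m=m_-$, so that $\lambda_1^m = n^{-1/2+\delta'}$, and compare the extremal initial state $X_0=(i,i,\dotsc,i)$ (with $i\in[k]$ chosen so that $e_i$ is far from the mean of $\lambda$) against $X_0'\sim\pi$. Unconditionally, $\mathcal{D}(X_m\mid X_0=(i,\dotsc,i)) = \int\varrho_v^n\,dP_V(v)$ with $V = Q_m e_i\sim P_V$, while $\pi = \int \varrho_v^n\, d\lambda(v)$, so the task reduces to distinguishing two mixtures of product multinomials whose mixing measures both live on $\simplexk$. Corollary~\ref{corollary:contraction} together with Remark~\ref{remark:min} yields, with probability $\to 1$, a coordinate $a\in[k]$ and a displacement of order $\lambda_1^m$ between $E_{P_V}[v(a)]$ and $E_\lambda[v(a)]$, precisely because the induced chain has not yet forgotten the starting point $e_i$. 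Since the empirical frequency $\hat{p}_a(X_m)$ recovers the mixing parameter $v(a)$ up to fluctuations of order $n^{-1/2}$ (the one-dimensional estimate underlying Remark~\ref{remark:bigBernoulliTV}(A)), and since $\lambda_1^m = n^{-1/2+\delta'}\gg n^{-1/2}$, a non-random Borel set of the form $\{x : \hat{p}_a(x)\leq t\}$, for a suitable non-random threshold $t$, will distinguish the two mixtures with probability $\geq 1-\varepsilon$, yielding the left-hand inequality of \eqref{eq:cutoff}.

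The main technical obstacle is the lower bound on the separation of $P_V$ from $\lambda$ at scale $\lambda_1^m$. The upper bound $W_1(P_V,\lambda)\leq E\|Q_m(e_i-Y_0)\|\leq C\lambda_1^m$ is immediate from Corollary~\ref{corollary:contraction}, but converting this into an unconditional TV lower bound on the mixtures $\int\varrho_v^n\,dP_V(v)$ versus $\int\varrho_v^n\,d\lambda(v)$ requires a matching \emph{lower} bound $\|Q_m v\|^{1/m}\to\lambda_1$ that is uniform in $v$ over a set of $\lambda$-full measure, together with a non-degeneracy of some coordinate functional $v\mapsto v(a)$ on the top singular direction of $Q_m|V$. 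These rest on strong irreducibility of $\Sigma$ (Remark~\ref{remark:furstenberg}) and on the absolute continuity of $\lambda$ off the vertices of $\simplexk$; assembling them into uniform-in-$v$ control that survives integration against $\lambda$ is the technical heart of the argument, after which the reduction to a one-dimensional total-variation comparison via $\hat{p}_a$ is a routine application of Lemma~\ref{lemma:bigBernoulliTV}.
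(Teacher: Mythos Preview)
Your upper bound is essentially the paper's: under Hypothesis~\ref{hypothesis:L2Density} the image $Q_1(\simplexk)$ lies in the interior of $\simplexk$ with positive probability, Corollary~\ref{corollary:contraction} pins the contraction rate at $\lambda_1$, and Corollary~\ref{corollary:explicitUB} (which packages Corollary~\ref{cor:prod-multiTV} and Lemma~\ref{lemma:mix} exactly as you outline) gives $t^{(n)}_{\textsc{mix}}(\varepsilon)\le(\theta+\delta)\log n$. (You do not need absolute continuity of $\lambda$; Remark~\ref{remark:recurrence} already places $Q_m(\simplexk)$ inside a fixed interior compactum with high probability.)

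The lower bound, however, has a genuine gap. From the constant initial state $X_0=(i,\dotsc,i)$ the statistic $\hat p_a(X_m)$ concentrates (given $\mathcal S$) near $(Q_m e_i)_a$, while under $\pi=\varrho^n_\lambda$ it concentrates near $v_a$ with $v\sim\lambda$. Both of these are random with spread of order one, and the law of $Q_m e_i$ converges to $\lambda$ as $m\to\infty$. A shift of order $\lambda_1^m=n^{-1/2+\delta'}$ in the \emph{mean} of an $O(1)$-spread distribution cannot make any half-space $\{\hat p_a\le t\}$ have probability gap close to $1$; that gap tends to $0$, not $1$. (Even the mean claim is off: $E[(Q_m e_i)_a]-E_\lambda[v_a]=((ES_1)^m e_i-E_\lambda v)_a$ decays at the rate of the second eigenvalue of $ES_1$, which is not $\lambda_1$.) Your last paragraph senses trouble but misdiagnoses it: the obstruction is not a missing pointwise lower bound on $\|Q_m v\|$, it is that any one-block frequency sees only a \emph{single} column of $Q_m$, whose random location in $\simplexk$ swamps the $n^{-1/2+\delta'}$ signal.

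The paper's fix is to choose an initial state that pits two columns of the \emph{same} realization of $Q_m$ against each other, so the paintbox randomness cancels. For $k=2$ and $n=2n'$ take $\tilde x_0=1^{n'}2^{n'}$: conditionally on $\mathcal S$, the first $n'$ coordinates of $\tilde X_m$ are i.i.d.\ from column $Q_m e_1$ and the last $n'$ from column $Q_m e_2$, and by Corollary~\ref{corollary:contraction} these columns differ by at least $n^{-1/2+\alpha}$ at time $m=(\theta-\delta)\log n$. Remark~\ref{remark:bigBernoulliTV}(B) then gives conditional total variation distance $\to 1$ between $\mathcal D(\tilde X_m\,|\,\mathcal S)$ and the corresponding law started from the constant state $1^{n}$ (equivalently, from $\pi$, since under any exchangeable law the two halves are indistinguishable). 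For general $k$ the paper partitions $[n]$ into blocks indexed by ordered pairs $(i,j)$ and applies the same two-half comparison in each block. The missing idea in your argument is that the distinguishing statistic must be \emph{internally comparative}, depending on differences of columns of $Q_m$ rather than on their common random location in $\simplexk$.
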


\begin{proof}
[Proof of the Upper Bound $t_{\text{\textsc{mix}}} (\varepsilon )\leq
(\theta+\delta )\log n$]  Because the distribution of
$S_{1}$ is absolutely continuous with respect to Lebesgue measure,
there is positive probability that all entries of $S_{1}=Q_{1}$ are
positive, and so there is positive probability that $Q_{1}
(\simplexk)$ is contained in the interior of $\simplexk$.  Therefore,
Corollary~\ref{corollary:explicitUB} applies. But Proposition~\ref{proposition:logNorm}
and Corollary~\ref{corollary:contraction} implies that, under Hypothesis~\ref{hypothesis:L2Density}, that $\varrho
=\lambda_{1}$. 

\end{proof}

\begin{proof}[Proof of the Lower Bound $t_{\text{\textsc{mix}}}(\varepsilon)\geq
(\theta -\delta  )\log n$] It suffices to show that there exist initial
states $x_{0},\tilde{x}_{0}$ such that if $\{X_{t} \}_{t\geq 0}$ and
$\{\tilde{X}_{t} \}_{t\geq 0} $ are versions of the EFCP with initial
states $X_{0}=x_{0}$ and $\tilde{X}_{0}=\tilde{x}_{0}$, respectively,
then the distributions of $X_{m}$ and $\tilde{X}_{m}$ have  total
variation distance near $1$ when $m\leq(\theta-\delta)\log n$. 
The proof will rely on Corollary~\ref{corollary:contraction},
according to which there is a possibly  random pair of indices $i\not
=j$ for which 
\begin{equation}\label{eq:2DecayRate}
\lim_{m \rightarrow \infty
}\xnorm{Q_{m}e_{i}-Q_{m}e_{j}}^{1/m}=\lambda_{1}.
\end{equation}

Consider first, to fix ideas, the special case $k=2$. In this case
\eqref{eq:2DecayRate} holds with $i=1$ and $j=2$.
Assume that $n=2n'$ is even (if $n$ is odd, project onto the first
$n-1$ coordinates), and let 
\[
	x_{0}=11\dotsb 111\dotsb 1 \quad \text{and} \quad
	\tilde{x}_{0}=111\dotsb 122\dotsb 2 
\]
be the elements of $[k]^{n}$ such that $x_{0}$ has all coordinates
colored $1$, while $\tilde{x}_{0}$ has its first $n'$ colored $1$ but
its second $n'$ colored $2$. We will show that the distributions of $X_{m}$ and
$\tilde{X}_{m}$ remain at large total variation distance 
at time $m=(\theta-\alpha )\log n$.  Without loss of generality, 
assume that both of the chains $\{X_{t} \}_{t\geq 0}$ and
$\{\tilde{X}_{t} \}_{t\geq 0} $ have the same paintbox sequence
$S_{1},S_{2},\dotsc$. Then by Proposition~\ref{prop:cond indep}, the
conditional distributions of  $X_{m}$ and
$\tilde{X}_{m}$ given $\mathcal{S}=\sigma
(S_{t})_{t\geq 1}$ are product-multinomials; in particular, for any
state $x^{l}\in [k]^{[n]}$, 
\begin{align*}
	P (X^{l}_{m}=x^{l} \;\; \text{for all} \; l\in [n]\,|\, \sigma
	(S_{t})_{t\geq 1}) &=\prod_{l=1}^{n} Q_{m} (x^{l}, 1)  \quad \text{and}\\
		P (\tilde{X}^{l}_{m}=\tilde{x}^{l} \;\; \text{for all}
		\; l\in [n]\,|\, \sigma 
	(S_{t})_{t\geq 1}) &=\prod_{l=1}^{n'} Q_{m} (\tilde{x}^{l}, 1)
	\prod_{l=n'+1}^{2n'} Q_{m} (\tilde{x}^{l}, 2).
\end{align*}
But relation \eqref{eq:2DecayRate} implies that, for some $\alpha
=\alpha (\delta)>0$, if $m=(\theta-\delta)\log n$ then the
$\ell^{\infty}-$distance between the $i$th and $j$th columns of
$Q_{m}$ is at least $n^{-1/2+\alpha}$, with probability approaching 1
as $n\rightarrow\infty$. Consequently, the first $n'$ and second $n'$
coordinates of $\tilde{X}_{m}$ are (conditional on $\mathcal{S}$)
independent samples from Bernoulli distributions whose parameters
differ by at least $n^{-1/2+\alpha}$, but the $2n'$ coordinates of
$X_{m}$ are (conditional on $\mathcal{S}$) a single sample from the
same Bernoulli distribution. It follows, by
Lemma~\ref{lemma:bigBernoulliTV} (see
Remark~\ref{remark:bigBernoulliTV}, statement (B)), that the
\emph{unconditional} distributions of $X_{m}$ and $\tilde{X}_{m}$ are
at large total variation distance, because in $\tilde{X}_{m}$ the
first and second blocks of $n'$ coordinates are distinguishable
whereas in $X_{m}$ they are not.  Thus, if $m=(\theta-\delta)\log n$
then as $ n \rightarrow \infty$,
\[
	\xnorm{\mathcal{D} (X_{m})-\mathcal{D} (\tilde{X}_{m})}_{TV}
	\longrightarrow 1.
\]

The general case is proved by a similar argument. Let $n=2k (k-1)n'$
be an integer multiple of $2k (k-1)$. Break the coordinate set $[n]$
into $k (k-1)$ non-overlapping blocks of size $2n'$, one for each ordered pair
$(i,j)$ of distinct colors. In the block indexed by $(i,j)$ let
$x_{0}$ take the value $i$, and let $\tilde{x}_{0}$ take the value $i$
in the first half of the block and the value $j$ in the second
half. Let $\{X_{t} \}_{t\geq 0}$ and $\{\tilde{X}_{t} \}_{t\geq 0}$ be
versions of the EFCP with initial states $x_{0}$ and $\tilde{x}_{0}$,
respectively. Then by an argument similar to that used in the binary
case $k=2$,  if $m=(\theta-\delta)\log n$ then for large $n$, in
\emph{some} block $(i,j)$ of $\tilde{X}_{m}$ the first $n'$ and second $n'$ coordinates
of $\tilde{X}_{m}$ will be distinguishable, but in $X_{m}$ they will
not. Therefore, the unconditional distributions of $X_{m}$ and
$\tilde{X}_{m}$ will be at total variation distance near $1$.

\end{proof}

\begin{example}[\emph{Self-similar cut-and-paste chains}]\label{ex:self-similar}
Self-similar cut-and-paste chains were introduced in
\cite{Crane2011}. These are EFCP chains for which the paintbox measure
$\Sigma =\Sigma_{\nu}$ is such that if $S_{1}\sim \Sigma$ then the
columns of $S_{1}$ are i.i.d.\ with common distribution $\nu$, for some
probability distribution $\nu$ on $\simplexk$. If $S_{1},S_{2},\dotsc$
are i.i.d. with distribution $\Sigma_{\nu}$ then the random matrix
products $Q_{m}=S_{m}S_{m-1}\dotsb S_{1}$ asymptotically collapse the
simplex \emph{provided} the measure $\nu$ is nontrivial (i.e., not a
point mass), and so Theorem~\ref{theorem:ub} applies. If in addition
the measure $\nu$ has a density of class $L^{p}$ relative to Lebesgue
measure on $\simplexk$, then Theorem~\ref{theorem:cutoff} applies.
\end{example}

\subsection{Examples}\label{section:examples} We now discuss some
examples of Markov chains on $\labelednk$ whose transitions are governed
by an i.i.d.\ sequence of random partition matrices $M_1,M_2,\ldots$ with law
$\mu$, but
are not EFCP chains because $\mu$ does not coincide with $\mu_\Sigma$ for some
probability measure $\Sigma$ on $\ksimplexk$.  As a result, the examples
we show are not covered by theorems
\ref{theorem:ub} or \ref{theorem:cutoff}.  None of the examples are
EFCP chains.   We are, however, able to establish
upper bounds and, in some cases, cutoff using different techniques.
All of the chains in these examples are reversible and ergodic relative to the
uniform distribution on $[k]^{[n]}$.

\begin{example}[\emph{Ehrenfest chain on the hypercube}]\label{ex:hypercube}
For $k=2$, any $L\in\labelednk$ can be regarded as an element in $[2]^n$, or equivalently $\{0,1\}^n$.  For each $i=1,\ldots,n$ and $a\in\{1,2\}$, we define $M_{a,i}$ as the $2\times2$ partition matrix with entries
$$M_{1,i}:=\begin{pmatrix} [n]\backslash\{i\} & \emptyset\\
\{i\} & [n]	
\end{pmatrix}\quad\mbox{ or }\quad M_{2,i}:=\begin{pmatrix} [n] & \{i\}\\
\emptyset & [n]\backslash\{i\}\end{pmatrix}.$$ Let $x_0\in\labelednk$
be an initial state and first choose $a_1,a_2,\ldots$ i.i.d.\
Bernoulli(1/2) and, independently of $(a_m)$, choose $i_1,i_2,\ldots$
i.i.d.\ from the uniform distribution on $[n]$.  Then the chain
$X=(X_m,m\geq0)$ is constructed by $X_0=x_0$ and, for $m=1,2,\ldots$,
$X_{m}=M_{a_m+1,i_m}X_{m-1}$.  This corresponds to the usual Ehrenfest
chain on the hypercube, which is known to exhibit the cutoff
phenomenon at $(1/2)n\log n$; for example, see
\cite{LevinPeresWilmer}, example 18.2.2.
\end{example}

\begin{example}[\emph{General Ehrenfest chain}]\label{ex:general Ehrenfest}
A more general form of the Ehrenfest chain in the previous example is described as follows.  Fix $n\in\mathbb{N}$, take $\alpha\in(0,1)$ and choose a random subset $A\subset[n]$ uniformly among all subsets of $[n]$ with cardinality $\lfloor{\alpha n}\rfloor:=\max\{r\in\mathbb{N}:r\leq\alpha n\}$, the {\em floor} of $\alpha n$.  For $i\in[2]$ and $A\subset[n]$, we define the partition matrix $M(A,i)$ by either
\[ M(A,1):=\begin{pmatrix} [n]\backslash A & \emptyset\\ A & [n]\end{pmatrix}\quad\mbox{ or } \quad M(A,2):=\begin{pmatrix} [n] & A\\ \emptyset & [n]\backslash A\end{pmatrix}.\]
Let $A=(A_1,A_2,\ldots)$ be an i.i.d.\ sequence of uniform subsets of size $\lfloor\alpha n\rfloor$, let $I=(I_1,I_2,\ldots)$ be i.i.d.\ from the uniform distribution on $\{1,2\}$ and let $x_0\in[2]^n$.  Conditional on $A$ and $I$, we construct $X=(X_m,m\geq0)$ by putting $X_0=x_0$ and, for each $m\geq1$, 
$$X_m:=M(A_m,I_m)\cdots M(A_1,I_1)X_0.$$
We call $X$ an {\em Ehrenfest}($\alpha$) chain.

Define the coupling time $T$ by 
$$T:=\min\left\{t\geq1:\bigcup_{j=1}^t A_j=[n]\right\}.$$
Any two chains $X$ and $X'$ constructed from the same sequence $A$ will be coupled by time $T$.  

An upper bound on the distance to stationarity of the general Ehrenfest($\alpha$) chain is obtained by standard properties of the hypergeometric distribution.  In particular, let $R_t:=\#\left([n]\backslash\bigcup_{j=1}^t A_j\right)$ be the number of indices that have not appeared in one of $A_1,\ldots,A_t$.  By definition, $\{T\leq t\}=\{R_t=0\}$ and standard calculations give
\[\mathbb{P}(R_{t+1}=j|R_t=r)={{r}\choose{r-j}} {{n-r}\choose{j}}{{n}\choose{\lfloor\alpha n\rfloor}}^{-1},\mbox{ }j=0,1,\ldots,r,\]
	\[\mathbb{E}(R_t)=n\left(1-\frac{\lfloor\alpha n\rfloor}{n}\right)^t.\]
For fixed $\alpha\in(0,1)$, the $\varepsilon$-mixing time is bounded above by 
\begin{equation}\label{eq:geo-ub}
\xnorm{\mathcal{D}(X_t)-\pi}_{TV}\leq n\left(1-\frac{\lfloor\alpha n\rfloor}{n}\right)^t\leq  n \exp\{-\lfloor\alpha n\rfloor t/n\}\end{equation}
and it immediately follows, for $\beta>0$ and $t=\left(\frac{n}{2\lfloor\alpha n\rfloor}\log n+\beta\frac{n}{\lfloor\alpha n\rfloor}\right)$, that
\[\xnorm{\mathcal{D}(X_t)-\pi}_{TV}\leq n^{-1/2}\exp(-\beta)\rightarrow0\mbox{ as }\beta\rightarrow\infty.\]
When $\alpha\in(0,1/2]$, we can use proposition 7.8 from \cite{LevinPeresWilmer} and some standard theory for coupon collecting to obtain the lower bound 
\[\xnorm{\mathcal{D}(X_t)-\pi}_{TV}\geq1-8\exp\{-2\beta+1\},\]
 when $t=\left(\frac{n}{2\lfloor\alpha n\rfloor}\log n - \beta\frac{n}{\lfloor\alpha n\rfloor}\right)$.  Hence, these chains exhibit cutoff at $n/(2\lfloor\alpha n\rfloor)\log n$.  
 
 Note that the standard Ehrenfest chain (example \ref{ex:hypercube}) corresponds to $\alpha=1/n$.  
\end{example}

\begin{example}[\emph{A $\log\log n$ upper bound on mixing time}]\label{ex:log-log}
For the general Ehrenfest chains described above, the upper bound \eqref{eq:geo-ub} on mixing time can be applied more generally to sequences $\alpha:=(\alpha_1,\alpha_2,\ldots)$ in $(0,1)$.  For each $n\in\mathbb{N}$, let $\alpha_n=1-\exp\{-\log n/\log\log n\}$ and let $X^n$ be an Ehrenfest($\alpha_n$) chain.  By \eqref{eq:geo-ub}, for $t\geq(1+\beta)\log\log n$, $\beta>0$, we have
$$
\xnorm{\mathcal{D}(X^n_t)-\pi}_{TV}\leq n^{-\beta},
$$
which converges to 0 as $n\rightarrow\infty$.

In general, we can obtain an upper bound of $(1+\beta)f(n)$, where $f(n)$ is a function of $n\in\mathbb{N}$, by the relation
 $$\alpha_n=1-\exp\left\{-\frac{\log n}{f(n)}\right\}.$$
\end{example}
The space $[k]^n$ is a group under addition modulo $k$ defined by $$x+x'=x+x'-2\mbox{ (mod }k\mbox{)}+1.$$  Write $\mathbb{N}_k^n$ to denote the group $[k]^n$ together with the operation $+$, which we define by componentwise addition modulo $k$ of the coordinates of $x\in[k]^n$.  That is, for any $x,x'\in[k]^n$, we have
$$(x+x')^i=x^i+x'^i-2\mbox{ (mod }k\mbox{)} + 1.$$
This action makes the space $\labelednk$ into a group with a corresponding action $\bullet$ that can also be represented by left action of a partition matrix as follows.  If we regard $L,L'\in\labelednk$ as elements of the group $(\mathbb{N}^n_k,+)$, then we define the group action $L\bullet L'\equiv L+L'$ in the obvious way.  Alternatively, for each $L\in\labelednk$, define $\bM_L\in\matrixnk$ as the $k\times k$ matrix whose $j$th column is the $j$th cyclic shift of the classes of $L$; that is,
\begin{equation}\label{eq:partition to matrix}
\bM_L:=\begin{pmatrix} L_1 & L_k & L_{k-1} & \cdots & L_2\\
												L_2 & L_1 & L_k & \cdots & L_3\\
												L_3 & L_2 & L_1 & \cdots & L_4\\
												\vdots & \vdots & \vdots & \ddots & \vdots\\
												L_k & L_{k-1} & L_{k-2} & \cdots & L_1\end{pmatrix}.\end{equation}
Then, for every $L,L'\in\labeledNk$, we have
$$L\bullet L':=\bM_L L'.$$
\begin{example}\label{ex:group}
For $n\in\mathbb{N}$, let $\varrho_n$ be a probability measure on $\labelednk$ and let $L_0\in\labelednk$.  A $\cp_n(\varrho_n)$ chain $X$ with initial state $X_0=L_0$ can be constructed as follows.  First, generate $L_1,L_2,\ldots$ i.i.d.\ from $\varrho_n$.  Conditional on $L_1,L_2,\ldots,$ put $X_m=L_m\bullet\cdots L_{1}\bullet X_0$.  Under the definition \eqref{eq:partition to matrix} this is a cut-and-paste chain; however, the columns of each matrix are a deterministic function of one another and are not conditionally independent (as in previous examples).

Consider the case where $\varrho_n$ is a product measure of a
probability measure $\lambda$ on $[k]$ which is symmetric, i.e.\
$$\lambda(j)=\lambda(k-j+1)>0,\quad j=1,\ldots,k.$$ In this case, it
is easy to see that the $\cp_n(\varrho_n)$ chain is reversible and
hence has the uniform distribution  as its unique
stationary distribution.

For this construction of $X$, the directing measure $\mu$ on $\matrixnk$ induced by $\lambda$ is neither row-column exchangeable (RCE) nor can it be represented as $\mu_\Sigma$ for some measure $\Sigma$ on $\stochk$.  Nonetheless, the mixing time of $X$ is bounded above by $K\log n$ for some constant $K\leq2/\min_j\lambda(j)<\infty$.
\end{example}

\section{Projected cut-and-paste chains}\label{section:projected
chains}

Recall that there is a natural projection
$\Pi_n:\labelednk\rightarrow\partitionsnk$ 
from the set $\labelednk$ of labeled partitions of $[k]$ to the set
$\partitionsnk$ of unlabeled partitions. If $\{X_{m} \}_{m\geq 0}$ is a
Markov chain on the set $[k]^{n}\cong \labelednk$ whose transition
probability matrix is invariant under permutations of the labels
$[k]$, then the projection $\{\Pi_{n} (X_{m}) \}_{m\geq 0}$ is also a
Markov chain. Assume henceforth that this is the case.

Following is a simple sufficient condition for the law of an EFCP
chain to be invariant under permutations of the label set $[k]$. Say
that a probability measure $\Sigma$ on the space  $\ksimplexk$ of column-stochastic
matrices is \emph{row-column exchangeable} if the  distribution of
$S_{1}\sim \Sigma $ is invariant under permutations of the rows or the columns.

\begin{lemma}\label{lemma:rce}
If $\{X_{m} \}_{m\geq 0}$ is an EFCP chain on the set $[k]^{n}\cong
\labelednk$ whose paintbox measure $\Sigma$ is row-column exchangeable
then its  transition probability matrix is invariant under
permutations of the labels $[k]$.
\end{lemma}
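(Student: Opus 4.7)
The plan is to reduce the statement to a change-of-variables identity in the integral formula for the one-step transition kernel, with the change of variables justified by row-column exchangeability of $\Sigma$.

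First, I would write out the one-step transition probabilities explicitly. By Proposition \ref{prop:cond indep} applied with $m=1$, conditional on $S_1 = S$ the coordinates of $X_1$ are independent with multinomial distributions given by the columns of $S$; averaging over $S \sim \Sigma$ yields
$$p(x, y) := \mathbf{P}(X_1 = y \mid X_0 = x) = \int_{\ksimplexk} \prod_{i=1}^n S(y^i, x^i) \, \Sigma(dS)$$
for every $x, y \in [k]^{[n]} \cong \labelednk$. Since the chain is time-homogeneous, invariance of the full transition matrix under label permutations is equivalent to $p(\sigma \cdot x, \sigma \cdot y) = p(x, y)$ for every permutation $\sigma$ of $[k]$ and every $x, y$, where $\sigma \cdot x$ denotes the coloring with $i$th coordinate $\sigma(x^i)$.

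Next I would introduce the diagonal action $T_\sigma : \ksimplexk \to \ksimplexk$ defined by $(T_\sigma S)(r, c) = S(\sigma(r), \sigma(c))$, which simultaneously permutes rows and columns of $S$ by $\sigma$. The key observation is that $T_\sigma$ is the composition of a row permutation and a column permutation, each of which preserves $\Sigma$ by the row-column exchangeability hypothesis; hence $T_\sigma$ preserves $\Sigma$ as well. Using $T_\sigma$ as a change of variables inside the integral,
$$p(\sigma \cdot x, \sigma \cdot y) = \int \prod_{i=1}^n S(\sigma(y^i), \sigma(x^i)) \, \Sigma(dS) = \int \prod_{i=1}^n (T_\sigma S)(y^i, x^i) \, \Sigma(dS) = \int \prod_{i=1}^n S(y^i, x^i) \, \Sigma(dS),$$
which is exactly $p(x, y)$.

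There is essentially no substantive obstacle. The only point worth flagging is that "row-column exchangeable" in the statement of the lemma is being used to guarantee invariance of $\Sigma$ under the diagonal action $T_\sigma$ rather than just under independent row- and column-permutations; this is immediate by composing the two invariances, but it is the single place where the hypothesis enters and should be stated cleanly.
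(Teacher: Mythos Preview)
Your proof is correct and follows essentially the same change-of-variables idea as the paper: both arguments use that relabeling corresponds to permuting rows and columns of $S$, and then invoke row-column exchangeability of $\Sigma$ to conclude. The one minor difference is that the paper actually records the stronger fact $P(L,L')=P(\Gamma L,\Gamma' L')$ for \emph{independent} permutations $\gamma,\gamma'$ (using row- and column-exchangeability separately), which it then uses to write down the explicit transition kernel of the projected chain; your diagonal version $p(\sigma\cdot x,\sigma\cdot y)=p(x,y)$ suffices for the lemma as stated but does not immediately give that formula.
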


\begin{proof}
Let $\gamma$ be a permutation of $[k]$ and define $\Gamma\in\matrixnk$
as the partition matrix with entries
\begin{displaymath}\Gamma_{ij}=\left\{\begin{array}{cc} [n],& \gamma(i)=j\\ \emptyset, &\mbox{otherwise.}\end{array}\right.\end{displaymath}
For $L,L'\in\labelednk$, let $P(L,L')$ denote the transition
probability from $L$ to $L'$ under the operation
\eqref{eq:cut-and-paste} with directing measure $\mu_\Sigma$.  By
row-column exchangeability of $\Sigma$, we have, for all permutations
$\gamma,\gamma'$ of $[k]$,
$$P(L,L')=P(L,\Gamma L')=P(\Gamma L,L')=P(\Gamma L,\Gamma' L')$$ for
every $L,L'\in\labelednk$.
It follows immediately that the transition probability $Q=P\Pi_n^{-1}$
of the projected chain $\Pi_n(X)$ is given by
$$Q(\Pi_n(L),\Pi_n(L'))=k^{\downarrow\#\Pi_n(L')}P(L,L'),\quad\mbox{for
every }L,L'\in\labelednk.$$
\end{proof}
Following Crane \cite{Crane2012}, we call the induced chain $\Pi:=\Pi_\infty(X)$
of an EFCP chain with RCE directing measure $\Sigma$ a {\em homogeneous 
cut-and-paste chain}.  

If the chain $\{X_{m} \}_{m\geq 0}$ is ergodic, then its unique
stationary distribution is invariant under permutations of $[k]$,
since its transition probability matrix is, and therefore projects via
$\Pi_{n}$ to a stationary distribution for the  projected chain
$\{\Pi_{n} (X_{m}) \}_{m\geq 0}$. The sufficiency principle (equation
\eqref{eq:sufficientStat}) for total variation distance (see also
Lemma~7.9 of \cite{LevinPeresWilmer}) implies
that the rate of convergence of the projected chain  $\{\Pi_{n}
(X_{m}) \}_{m\geq 0}$  is  bounded by
that of the original chain $\{X_{m} \}_{m\geq
0}$. Theorem~\ref{theorem:ub} provides a bound for this convergence
when the chain $\{X_{m} \}_{m\geq 0}$ is an EFCP.

\begin{corollary}\label{corollary:ub}
Assume that $\{X_{m}=X^{[n]}_{m} \}_{m\geq 0}$ is an EFCP chain on
$[k]^{[n]}$ whose paintbox measure $\Sigma$ is RCE and satisfies the hypothesis
of Theorem~\ref{theorem:ub} (in particular, the random matrix products
$Q_{m}$ asymptotically collapse the simplex $\simplexk$). Then for a
suitable constant $K=K_{\Sigma}<\infty$ depending only on the
distribution $\Sigma$ of $S_{1}$, and for any $\varepsilon>0$, the mixing times
$t_{\text{\textsc{mix}}}^{(n)} (\varepsilon)$ of the projected chain
$\{\Pi_{n}(X_{m}) \}_{m\geq 0}$ satisfy
 $$t_{\text{\textsc{mix}}}^{(n)}(\varepsilon)\leq K\log n$$
for all sufficiently large $n$.
\end{corollary}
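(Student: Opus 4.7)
The plan is to deduce the corollary from Theorem~\ref{theorem:ub} by a direct application of the data-processing (sufficiency) principle for total variation distance. The ingredients are already in place: by Lemma~\ref{lemma:rce}, the hypothesis that $\Sigma$ is RCE forces the transition kernel of $\{X^{[n]}_m\}$ to be equivariant under permutations of the label set $[k]$, and by the hypothesis on $\Sigma$, Theorem~\ref{theorem:ub} gives a constant $K=K_\Sigma<\infty$ with $t^{(n)}_{\textsc{mix}}(\varepsilon)\leq K\log n$ for the labeled chain.

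The steps I would carry out are as follows. First, let $\pi^{(n)}$ denote the unique stationary distribution of $\{X^{[n]}_m\}_{m\geq 0}$ (which exists by Proposition~\ref{proposition:ergodicityces-condition}, since the asymptotic collapse property is assumed). Label-equivariance of the transition kernel implies that $\pi^{(n)}$ is itself invariant under permutations of $[k]$, so the pushforward $\bar\pi^{(n)}:=\pi^{(n)}\circ\Pi_n^{-1}$ is a stationary distribution for the projected chain $\{\Pi_n(X_m)\}_{m\geq 0}$. Second, given any initial state $B\in\partitionsnk$ for the projected chain, pick any lift $L_0\in\Pi_n^{-1}(B)\subset\labelednk$ and run $\{X_m\}$ from $X_0=L_0$; then $\{\Pi_n(X_m)\}$ is a version of the projected chain started at $B$. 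Third, apply the contraction of total variation under measurable maps (equivalently, the sufficiency principle \eqref{eq:sufficientStat} applied to the statistic $\Pi_n$) to obtain
\[
\bigl\|\mathcal{D}(\Pi_n(X_m))-\bar\pi^{(n)}\bigr\|_{TV}\;\leq\;\bigl\|\mathcal{D}(X_m)-\pi^{(n)}\bigr\|_{TV}.
\]
Fourth, take the maximum over the initial state $B$, which amounts to taking the maximum over lifts $L_0$ on the right-hand side, and invoke Theorem~\ref{theorem:ub} to bound the right-hand side by $\varepsilon$ whenever $m\geq K\log n$.

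There is no real obstacle here; the only mild subtlety is confirming that the projected chain is indeed a Markov chain with the pushforward as stationary law, which follows because label-equivariance of the kernel of $\{X_m\}$ lets $\Pi_n$ serve as a lumping (Dynkin) map, and that the worst-case mixing time over initial partitions $B\in\partitionsnk$ is dominated by the worst-case mixing time over lifts $L_0\in\labelednk$, which is already controlled by Theorem~\ref{theorem:ub}. Combining these observations yields $t^{(n)}_{\textsc{mix}}(\varepsilon)\leq K\log n$ for the projected chain with the same constant $K$ as in Theorem~\ref{theorem:ub}.
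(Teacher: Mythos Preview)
Your proposal is correct and follows essentially the same route as the paper: the paper simply invokes the sufficiency principle \eqref{eq:sufficientStat} (together with Lemma~\ref{lemma:rce} to ensure the projection is a Markov chain with pushforward stationary law) to bound the projected chain's distance to stationarity by that of the labeled chain, and then applies Theorem~\ref{theorem:ub}. Your write-up makes the lifting of initial states and the lumping argument more explicit, but the substance is identical.
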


\begin{theorem}\label{theorem:projected-chains}
 Suppose $\Sigma$ is a row-column exchangeable probability measure on $\stochk$.  Let $X$ be a $\cp_n(\mu_{\Sigma})$ chain and let $Y=\Pi_n(X)$ be its projection into $\partitionsnk$.  Let $t_X(\varepsilon)$ and $t_Y(\varepsilon)$ denote the $\varepsilon$-mixing times of $X$ and $Y$ respectively.  Then $$t_X(\varepsilon)=t_Y(\varepsilon).$$
 In particular, if $l(\varepsilon,n)\leq t_X(\varepsilon)\leq L(\varepsilon,n)$ are upper and lower bounds on the $\varepsilon$-mixing times of $X$, then
 $$l(\varepsilon,n)\leq t_Y(\varepsilon)\leq L(\varepsilon,n),$$
 and vice versa.
 Moreover, $X$ exhibits the cutoff phenomenon if and only if $Y$ exhibits the cutoff phenomenon.
\end{theorem}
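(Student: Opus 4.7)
The first inequality $t_Y(\varepsilon) \leq t_X(\varepsilon)$ is immediate from the sufficiency principle \eqref{eq:sufficientStat}: since $Y = \Pi_n(X)$ is a deterministic function of $X$ and every $\pi_0 \in \partitionsnk$ admits a lift $L_0 \in \labelednk$ with $\Pi_n(L_0) = \pi_0$, we have $\|\mathcal{D}(Y_m\,|\,Y_0=\pi_0) - \pi_Y\|_{TV} \leq \|\mathcal{D}(X_m\,|\,X_0 = L_0) - \pi_X\|_{TV}$, and taking maxima over initial states on each side yields the inequality.

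For the reverse direction my plan is to establish the sharper pointwise identity
\begin{equation*}
\|P^m(L_0,\cdot) - \pi_X\|_{TV} \;=\; \|Q^m(\Pi_n(L_0),\cdot) - \pi_Y\|_{TV}\qquad\text{for every } L_0\in\labelednk,\ m\geq 1,
\end{equation*}
where $P$ and $Q$ denote the one-step kernels of $X$ and $Y$. The key claim to verify is that, under the RCE hypothesis on $\Sigma$, the distribution $P^m(L_0,\cdot)$ is constant on each fiber $\Pi_n^{-1}(\pi')$; the corresponding fact for $\pi_X$ is already a consequence of Lemma~\ref{lemma:rce}. Once fiber-constancy is in hand, grouping the defining sum for $\|P^m(L_0,\cdot)-\pi_X\|_{TV}$ by fiber and using the identities $Q^m(\Pi_n(L_0),\pi')=\sum_{L'\in\Pi_n^{-1}(\pi')}P^m(L_0,L')$ and $\pi_Y(\pi')=\sum_{L'\in\Pi_n^{-1}(\pi')}\pi_X(L')$ yields the equality, with the fiber-size factors cancelling cleanly on each term. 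Taking the maximum over initial states on both sides and observing that the right-hand side depends on $L_0$ only through $\Pi_n(L_0)$ then gives $t_X(\varepsilon)=t_Y(\varepsilon)$; the transfer of mixing-time bounds and the equivalence of the cutoff phenomenon are immediate consequences of this literal equality.

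To establish the fiber-constancy claim I would invoke Proposition~\ref{prop:cond indep}, which gives $P^m(L_0,L') = \mathbf{E}\bigl[\prod_{i=1}^n Q_m({L'}^i, L_0^i)\bigr]$ with $Q_m = S_m S_{m-1}\cdots S_1$. For a permutation $\sigma$ of $[k]$, substituting $\sigma L'$ for $L'$ amounts exactly to a row permutation of $Q_m$: writing $(A\cdot\sigma)(a,b):=A(\sigma(a),b)$, one has $Q_m\cdot\sigma = (S_m\cdot\sigma)\,S_{m-1}\cdots S_1$. Row-exchangeability of $\Sigma$ gives $S_m\cdot\sigma \stackrel{d}{=} S_m$, and independence of the paintbox sequence then transfers this to $Q_m\cdot\sigma \stackrel{d}{=} Q_m$, so that $P^m(L_0,\sigma L')=P^m(L_0,L')$, as required. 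The only edge case is $m=0$, where $Q_0=I$ is deterministic and the argument fails, but this is harmless because mixing time is defined only for $m\geq 1$.

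The main technical obstacle is this fiber-constancy step, whose subtlety is that it is purely \emph{distributional}: one does not have $Q_m\cdot\sigma = Q_m$ almost surely, only in law. The argument succeeds because the row permutation can be absorbed into the leftmost factor $S_m$, which is precisely where the RCE assumption on $\Sigma$ is indispensable; an analogous absorption of a column permutation into the rightmost factor $S_1$ would handle reparametrization of the initial state, but is not needed here since we have already reduced to control of $P^m(L_0,\cdot)$ for a fixed $L_0$.
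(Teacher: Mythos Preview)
Your proof is correct and takes essentially the same route as the paper: the paper's one-line argument invokes the sufficiency principle \eqref{eq:sufficientStat} for $\Pi_n$, and your fiber-constancy claim $P^m(L_0,\sigma L')=P^m(L_0,L')$ is precisely the verification that $\Pi_n$ is sufficient for the likelihood ratio $dP^m(L_0,\cdot)/d\pi_X$, after which your fiber-grouping computation is the sufficiency principle written out explicitly. The paper leaves the fiber-constancy check implicit, so your argument is a more detailed version of the same idea rather than a different approach.
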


 \begin{proof}
 If $\pi$ is the stationary distribution for $X$, then $\pi\Pi_n^{-1}$
 is the stationary distribution of $Y$.  The rest follows by the
 proceeding discussion regarding sufficiency of $\Pi_n(X)$ and the
 sufficiency principle \eqref{eq:sufficientStat}.  \end{proof}

 \begin{corollary}
Assume that the paintbox measure $\Sigma$ is row-column exchangeable
and  satisfies  hypothesis
\ref{hypothesis:L2Density}, and let $\{X_{m} \}_{m\geq 0}$ be the EFCP
on $[k]^{[n]}$ with associated paintbox measure $\Sigma$.
Then the projected
 $\cp_n(\mu_\Sigma)$ chain $\Pi_n(X)$ exhibits the cutoff phenomenon
 at time $\theta\log n$, where $\theta=-1/(2\log\lambda_1)$. 
 \end{corollary}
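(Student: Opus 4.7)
The plan is to combine Theorem~\ref{theorem:cutoff} with Theorem~\ref{theorem:projected-chains}, using Lemma~\ref{lemma:rce} as the bridge that ensures the label-permutation invariance needed to pass from the EFCP chain to its projection.

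First I would observe that Hypothesis~\ref{hypothesis:L2Density} on $\Sigma$ is exactly what is required to invoke Theorem~\ref{theorem:cutoff}: this yields the cutoff phenomenon for the EFCP chain $\{X_m\}_{m\geq 0}$ on $[k]^{[n]}$ at time $\theta\log n$, with $\theta=-1/(2\log\lambda_{1})$ where $\lambda_{1}$ is the second Lyapunov exponent of the paintbox sequence $Q_{m}=S_{m}S_{m-1}\dotsb S_{1}$. In particular, for every $\varepsilon,\delta\in(0,1/2)$ and all sufficiently large $n$,
\[
    (\theta-\delta)\log n\leq t^{(n)}_{\textsc{mix}}(X;1-\varepsilon)\leq t^{(n)}_{\textsc{mix}}(X;\varepsilon)\leq(\theta+\delta)\log n.
\]

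Second, since $\Sigma$ is row-column exchangeable by hypothesis, Lemma~\ref{lemma:rce} applies and tells us that the one-step transition probabilities of $\{X_m\}$ are invariant under the action of permutations $\gamma$ of the label set $[k]$. Consequently, the hypothesis of Theorem~\ref{theorem:projected-chains} is satisfied, so the mixing times of $X$ and of its projection $Y=\Pi_{n}(X)$ coincide: $t^{(n)}_{\textsc{mix}}(Y;\varepsilon)=t^{(n)}_{\textsc{mix}}(X;\varepsilon)$ for every $\varepsilon$. Substituting into the previous display gives the same sandwich bound for $Y$, which is precisely the statement that $\Pi_{n}(X)$ exhibits cutoff at $\theta\log n$.

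There is essentially no obstacle in this argument: the two upstream results have been set up exactly so that this corollary follows by concatenation. The only thing worth pausing over is making sure the projection really is Markovian and inherits the correct stationary measure, but this is handled by the discussion preceding Theorem~\ref{theorem:projected-chains} (label-permutation invariance of transitions implies label-permutation invariance of the stationary distribution, which in turn makes $\Pi_{n}$ a sufficient statistic, so the total variation identity in \eqref{eq:sufficientStat} equates the two mixing profiles, not merely their mixing times).
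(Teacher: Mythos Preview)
Your argument is correct and is exactly the intended one: the paper states this corollary without proof immediately after Theorem~\ref{theorem:projected-chains}, as it follows at once by combining that theorem with Theorem~\ref{theorem:cutoff}. The only superfluous step is the separate appeal to Lemma~\ref{lemma:rce}, since Theorem~\ref{theorem:projected-chains} already takes row-column exchangeability of $\Sigma$ as its hypothesis; otherwise your reasoning matches the paper's implicit derivation.
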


\bibliography{draft-refs}
\bibliographystyle{plain}
\end{document}